\newtheorem{theorem}{Theorem}[section]
\newtheorem{lemma}[theorem]{Lemma}
\newtheorem{proposition}[theorem]{Proposition}
\newtheorem{corollary}[theorem]{Corollary}
\theoremstyle{definition}
\theoremstyle{remark}
\newtheorem*{remark}{Remark}
\newcommand{\vertiii}[1]{{\left\vert\kern-0.25ex\left\vert\kern-0.25ex\left\vert #1 
\right\vert\kern-0.25ex\right\vert\kern-0.25ex\right\vert}}
\newcommand{\R}{{\mathbb R}}
\numberwithin{equation}{section}
\def\1{\textbf{\rm 1}}
\def\XXint#1#2#3{{\setbox0=\hbox{$#1{#2#3}{\int}$}
\vcenter{\hbox{$#2#3$}}\kern-.5\wd0}}
\begin{document}

\keywords{Blaschke--Santal\'{o} inequality, inverse Brascamp--Lieb inequality, Legendre transform, Talagrand inequality, entropy,  Wasserstein barycenter}

\subjclass[2020]{{39B62, 52A40 (primary); 26D15, 52A38, 94A17   (secondary)}}

\author[Nakamura]{Shohei Nakamura}
\address[Shohei Nakamura]{Department of Mathematics, Graduate School of Science, Osaka University, Toyonaka, Osaka 560-0043, Japan}
\email{srmkn@math.sci.osaka-u.ac.jp}
\author[Tsuji]{Hiroshi Tsuji}
\address[Hiroshi Tsuji]{Department of Mathematics, Graduate School of Science and Engineering, Saitama University, Saitama 338-8570, Japan}
\email{tsujihiroshi@mail.saitama-u.ac.jp}

\title[Generalised Legendre duality and Gaussian saturation]{A generalized Legendre duality relation and Gaussian saturation 
}

\begin{abstract}
%
Motivated by the barycenter problem in optimal transportation theory, Kolesnikov--Werner recently extended the notion of the Legendre duality relation for two functions to the case for multiple functions. 
We further generalize the duality relation and then establish the centered Gaussian saturation property for a Blaschke--Santal\'{o} type inequality associated with it. 
Our approach to the understanding such a generalized Legendre duality relation is based on our earlier observation that directly links Legendre duality with the inverse Brascamp--Lieb inequality. 
More precisely, for a large family of degenerate Brascamp--Lieb data, we prove that the centered Gaussian saturation property for the inverse Brascamp--Lieb inequality holds true when inputs are restricted to even and log-concave functions. 


As an application to convex geometry, we establish the most important case of a conjecture of Kolesnikov and Werner about the Blaschke--Santal\'{o} inequality for multiple even functions as well as multiple symmetric convex bodies. 
Furthermore, in the direction of information theory and optimal transportation theory, this provides an affirmative answer to another conjecture of Kolesnikov--Werner about a Talagrand type inequality for multiple even probability measures that involves the Wasserstein barycenter. 
\end{abstract}

\maketitle

\section{Introduction}
Duality is a pervasive concept in mathematics, appearing in many different forms across various fields. 
In convex geometry, it is represented by the notion of the polar body that is associated to any convex body in Euclidean spaces.
A celebrated Blaschke--Santal\'{o} inequality describes the correlation between a convex body and its dual in terms of their volumes. It states that the product of volumes of the convex body and its polar body, which is so-called the volume product or Mahler volume, is maximized by Euclidean ball among all symmetric convex bodies. 
It is K. Ball \cite{BallPhd} and Artstein-Avidan--Klartag--Milman \cite{AKM} who extended this duality inequality for convex bodies to the functional analytic framework, where the dual of a function is described by the Legendre transform. 
As an analogue to the volume product of a convex body, they introduced the functional volume product of a function via the Legendre duality relation, and established the functional Blaschke--Santal\'{o} inequality; the functional volume product is maximized by the centered Gaussian among all even functions. 
Among several applications of the functional Blaschke--Santal\'{o} inequality, we mention the application / interpretation in information theory and geometry of probability measures via  entropy and Wasserstein distance, that is so-called the symmetric Talagrand inequality \cite{Fathi}. 
Such an interpretation is underpinned by the Kantorovich duality where the Legendre duality relation plays an important role to describe the Wasserstein distance.  
Furthermore, the symmetric Talagrand inequality may be also regarded as the effective inequality to understand midpoints of two probability measures on the Wasserstein space. 
Such an observation was made by Kolesnikov--Werner \cite{KW}, and they pursued further developments toward this direction by investigating the problem about the Wasserstein barycenter of multiple probability measures. 
According to the purely mathematical interest, as well as the recent realization of its usefulness 
in theoretical computer science \cite{Strum, PeyCut}, the study of the barycenter problem has attracted attention in optimal transportation theory. 
Through the Kantorovich duality and this barycenter problem, Kolesnikov--Werner \cite{KW} introduced a notion of the generalized Legendre duality relation for multiple functions, and proposed the functional Blaschke--Santal\'{o} type inequality associated with it. 
After observing the equivalence to their Talagrand type inequality for multiple even probability measures that involves the barycenter, they established their generalized Blaschke--Santal\'{o} type inequality when the input functions are all unconditional. 
It has been conjectured in \cite{KW} that the inequality holds true for all even input functions, and this remains to be still open as far as we are aware. 

In this paper, we develop our new observation made in \cite{NT,NT2} about the direct link\footnote{To be fair, it is well-known that the Pr\'{e}kopa--Leindler inequality, which is a member of so-called Barthe's reverse Brascamp--Lieb inequality, is useful to the study of the Legendre transform. As will be clear, our direct link is not about this, and related to so-called the inverse Brascamp--Lieb inequality.} between the classical Legendre duality relation and the Brascamp--Lieb theory regarding the multilinear integral functional, and then advance the study of the generalized Legendre duality relation. 
This new link enables us to employ deep ideas and techniques that have been developed in the Brascamp--Lieb theory for the purpose of the study of the generalized Legendre duality relation. 
In particular, we use so-called Ball's inequality\footnote{
This is a certain monotonicity statement of the Brascamp--Lieb constant under the convolution. 
}, whose strength has been recently capitalized in the study of the nonlinear Brascamp--Lieb inequality by Bennett et al. \cite{BBBCF}, as a fundamental idea.  
In more precise terms, we have two main results in this paper. We first extend a  generalized Legendre duality relation of Kolesnikov--Werner to much wider class of duality relations based on the spirit of the Brascamp--Lieb theory. Then  our first main result states that the best constant of the Blaschke--Santal\'{o} type inequality associated with this generalized Legendre duality relation is saturated by centered Gaussians. 
This property is universal in the sense that it holds independently of duality relations, and the idea of such a property clearly comes from Lieb's fundamental theorem \cite{Lieb}. 
This first main result is a consequence of our second main result about the centered Gaussian saturation property for the inverse Brascamp--Lieb inequality under the evenness. 
The study of the inverse Brascamp--Lieb inequality has been initiated by Chen--Dafnis--Paouris \cite{CDP}, and then Barthe--Wolff have done the systematic study of the inequality by imposing some nondegeneracy condition  \cite{BW}. 
The crucial point here is that, although their nondegeneracy condition is necessary for their study in a certain sense, the direct link between the Legendre duality relation and the inverse Brascamp--Lieb inequality appears at the degenerate case, where the systematic study of Barthe--Wolff is not applicable. 
Nevertheless, for a large class of Brascamp--Lieb data that do not necessarily satisfy the nondegeneracy condition of Barthe--Wolff, we manage to establish the centered Gaussian saturation property for the inverse Brascamp--Lieb inequality by imposing the evenness and log-concavity on inputs.  
As an application in convex geometry, we establish the special, but the most important, case of a conjecture of Kolesnikov--Werner about the Blaschke--Santal\'{o} inequality for multiple even functions and multiple symmetric convex bodies. In the direction of information theory and optimal transportation theory, this settles down their conjecture about a Talagrand type inequality for multiple even probability measures that involves the barycenter. 

\subsection{Generalized Legendre duality relation and Blaschke--Santal\'{o} type inequality}\label{Section1.1}
Duality is an ubiquitous phenomenon that appears in many area of mathematics. 
The fundamental idea of the use of duality is to extract deeper information of some mathematical object itself by investigating its dual object. 
Therefore it is pivotal to understand the relation between the original mathematical object and its dual object. 
In convex geometry, such duality is described by the notion of the polar body. 
For a given symmetric convex body $K\subset \mathbb{R}^n$, where $K$ is said to be symmetric if $-K=K$, its polar body is defined as $K^\circ := \{ x\in \mathbb{R}^n: \sup_{y\in K}\langle x, y\rangle \le 1 \}$, where $\langle \cdot,\cdot \rangle$ is a standard inner product of $\mathbb{R}^n$.
One way to understand the relation between $K$ and $K^\circ$ is to consider the quantity $v(K):= |K||K^\circ|$ which is so-called volume product or Mahler volume, where $|\cdot|$ stands for the standard Euclidean volume. 
Since the volume product is linear invariant, it makes sense to ask what is the maximum / minimum value of $v(K)$ among all symmetric convex bodies. 
The celebrated Blaschke--Santal\'{o} inequality provides the answer to the maximum, and states that $v(K)\le v(\mathbf{B}^n_2)$ holds for any symmetric convex body $K$ where $\mathbf{B}^n_2:=\{ x\in \mathbb{R}^n: \sum_{i=1}^n |x_i|^2 \le 1 \}$. 
This inequality was proved by Blaschke \cite{Blaschke} for $n=2,3$ and Santal\'{o} \cite{Santalo} for $n\ge4$. We refer to \cite{BK,CKLR,CFL,CGNT,CFM,MeyPaBook,MeyPa,NT2,Saint} for several alternative proofs. 
The problem to identify the minimum value of the volume product among symmetric convex bodies, known as Mahler's conjecture, is still an open problem, and has been for almost a century. Mahler expected that the minimum is attained by the Euclidean cube and confirmed it when $n=2$ \cite{Mar1,Mar2}. 
A recent breakthrough was brought by Iriyeh--Shibata \cite{IriShi} where Mahler's conjecture was confirmed affirmatively when $n=3$, and their proof was significantly simplified by Fradelizi et al. \cite{FHMRZ}. The problem for $n\ge4$ is open despite several partial progresses; see the survey article \cite{FMZSurvey}. 

The Blaschke--Santal\'{o} inequality has been put into the analytically functional framework by Ball \cite{BallPhd} and Artstein-Avidan--Klartag--Milman \cite{AKM}, see also Fradelizi--Meyer \cite{FraMeyMathZ} and Lehec \cite{LehecDirect,LehecYaoYao} for further generalizations as well as alternative proofs. 
For a nonnegative function $f$ on $\mathbb{R}^n$, its polar function, denoted by $f^\circ$, is defined as 
$$
f^\circ(x) := \inf_{y\in\mathbb{R}^n} \frac{e^{-\langle x,y\rangle}}{f(y)},\;\;\; x\in \mathbb{R}^n. 
$$
We often identify $f = e^{-\varphi}$ for some $\varphi: \mathbb{R}^n\to \mathbb{R}\cup\{+\infty\}$ and say that $f$ is log-concave if $\varphi$ is convex on $\{ \varphi < +\infty \}$. 
In this terminology, $f^\circ(x) = e^{-\varphi^*(x)}$ holds where $\varphi^*(x):= \sup_{y\in \mathbb{R}^n} [\langle x,y\rangle - \varphi(y)]$ is the Legendre transform of $\varphi$.
The functional volume product for $f$ is defined as 
$$
v(f):= \int_{\mathbb{R}^n} f\, dx \int_{\mathbb{R}^n} f^\circ\, dx. 
$$
For a symmetric convex body $K \subset \mathbb{R}^n$, the Minkowski functional $\|x\|_K:= \inf\{r>0: x\in rK\}$, $x\in\mathbb{R}^n$,  becomes a norm on $\mathbb{R}^n$ and satisfies 
\begin{equation}\label{e:Func->Geo}
\int_{\mathbb{R}^n} e^{-\frac12 \|x\|_K^2}\, dx = \frac{(2\pi)^\frac{n}2}{|\mathbf{B}^n_2|}|K|,
\;\;\;
\big( \frac12 \|\cdot\|_K^2 \big)^*(x)
= 
\frac12 \|x\|_{K^\circ}^2.
\end{equation}
It is clear from these properties that the standard Gaussian $g(x):=  e^{ -\frac12 |x|^2 }$ plays the role of $\mathbf{B}^n_2$ in this functional formulation. 
More generally, for a positive definite matrix\footnote{In this paper we write $A>0$ and $A\ge0$ if $A$ is a symmetric positive definite and semidefinite respectively. } $A$, we denote the centered Gaussian with the covariance matrix $A^{-1}$ by $g_A(x):= e^{ -\frac12 \langle x,Ax\rangle }$. 
Then the functional Blaschke--Santal\'{o} inequality states that 
 \begin{equation}\label{e:FBS}
    v(f)\le v(e^{-\frac12|x|^2}) = (2\pi)^n,
 \end{equation}
 holds for all nonnegative and even $f \in L^1(\mathbb{R}^n)$ with $\int_{\mathbb{R}^n} f\, dx >0$. 
The case of equality in \eqref{e:FBS} appears if and only if $f$ is  multiplicative of $g_A$ for some $A>0$. 
By choosing $f= e^{-\frac12 \|x\|_K^2}$, \eqref{e:FBS} recovers the classical Blaschke--Santal\'{o} inequality  since we have  $v( e^{-\frac12 \|\cdot\|_K^2} ) = {(2\pi)^n}{|\mathbf{B}^n_2|^{-2}} v(K)$ from \eqref{e:Func->Geo}. 
We note that the assumption of the evenness was weakened to the condition that the center of mass of $f$ is 0 in \cite{AKM,CFL,CFM,LehecDirect,LehecYaoYao}.

The functional Blaschke--Santal\'{o} inequality may be stated in the following equivalent way: for any nonnegative and even $f_1,f_2 \in L^1(\mathbb{R}^n)$ satisfying the duality relation 
\begin{equation}\label{e:LegendreDual}
    f_1(x_1) f_2(x_2) \le e^{ - \langle x_1,x_2\rangle },\quad x = (x_1,x_2) \in \mathbb{R}^{2n}, 
\end{equation}
it holds that $\int_{\mathbb{R}^n} f_1\, dx_1\int_{\mathbb{R}^n} f_2\, dx_2 \le (2\pi)^n$. 
Clearly, \eqref{e:LegendreDual} is satisfied for $f_2 = f_1^{\circ}$. 
This formulation of the inequality was found by {\cite{LehecDirect}} first. 
Although this is a simple reformulation of the inequality, it enables us to extend the notion of the Legendre duality relation to multiple input functions. 
Let $m\ge2$ be a natural number. For a tuple of nonnegative functions $\mathbf{f} = (f_1,\ldots, f_m)$, we consider the generalized Legendre duality relation 
\begin{equation}\label{e:KWDuality}
    \prod_{i=1}^m f_i(x_i) \le e^{- \frac{1}{m-1} \sum_{i<j} \langle x_i,x_j\rangle }, \quad x = (x_1,\ldots,x_m) \in (\mathbb{R}^{n})^m. 
\end{equation}
It was Kolesnikov--Werner \cite{KW} who introduced the notion of this generalized Legendre duality relation. 
Such an extension of the duality relation relation \eqref{e:LegendreDual} is  motivated by the barycenter problem of multiple probability measures with respect to the Wasserstein distance. 
As is well-known, the Legendre duality  \eqref{e:LegendreDual} appears in the dual formulation of the Kantorovich problem, that is the Wasserstein distance between two probability measures. 
Similarly the barycenter problem is closely related to the extension of the Kantorovich duality for multiple probability measures; see forthcoming subsection \ref{Section4.2} for more detailed discussion about this perspective. 
Given the notion of the generalized Legendre duality relation, one may wonder if there is any Blaschke--Santal\'{o} type inequality associated with the duality relation. Kolesnikov--Werner \cite{KW} indeed addressed this question, and gave a partial answer as follows: for any nonnegative and \textit{unconditional}\footnote{We say that a function $f$ is unconditional if $f(\varepsilon_1x_1,\ldots,\varepsilon_nx_n) = f(x)$ for any $x\in \mathbb{R}^n$ and $(\varepsilon_1,\ldots,\varepsilon_n)\in \{-1,1\}^n$.} input $\mathbf{f}$ satisfying \eqref{e:KWDuality}, 
\begin{equation}\label{e:KW-Ineq0}
\prod_{i=1}^m \int_{\mathbb{R}^n} f_i\, dx_i \le \big( \int_{\mathbb{R}^n} e^{-\frac12|x|^2}\, dx \big)^m = (2\pi )^{\frac{nm}2}. 
\end{equation}
They made a conjecture that the same inequality\footnote{
To be precise, they considered more general duality relation 
$
\prod_{i=1}^m f_i(x_i) \le \rho\big( \sum_{i<j} \langle x_i,x_j\rangle \big)$, $ (x_1,\ldots,x_m) \in (\mathbb{R}^n)^m $
for a positive non-increasing function on $[0,\infty)$ with $\int_0^\infty \rho(t^2)^\frac1m\, dt <\infty$, and then proved that 
$$
\prod_{i=1}^m \int_{\mathbb{R}^n} f_i\, dx_i \le \bigg( \int_{\mathbb{R}^n} \rho\big( \frac{m(m-1)}2 |u|^2 \big)^\frac1m\, du \bigg)^m,
$$
if all $f_i$ are unconditional. 
Their full conjecture is about the extension of this inequality involving $\rho$ for all even $f_i$. 
Our framework \eqref{e:KWDuality} and \eqref{e:KW-Ineq0} are the special case of $\rho(t) = e^{-\frac{t}{m-1}}$, and thus the problem we address in this paper is also the special case of Kolesnikov--Werner's conjecture. 
By allowing a slight ambiguity, we will still call this special case of the conjecture as Kolesnikov--Werner's conjecture in below. 
} holds true if one weakens the unconditional assumption to the evenness assumption, like the classical Blaschke--Santal\'{o} inequality. 
The argument of Kolesnikov--Werner for the unconditional case is based on Lehec's argument for the direct proof of the functional Blaschke--Santal\'{o} inequality \cite{LehecDirect,LehecYaoYao}. 
Moreover, this unconditional case seems to be the limitation via the simple adaptation of Lehec's argument because of the following reason. 
One of the difficulty of this conjecture comes from the lack of the linear invariance when $m\ge3$, unlikely the case of $m=2$. Indeed, the case of equality is expected to appear only when all $f_i $ are multiplicative of the standard Gaussian. 
This lack of the linear invariance is critical if one tries to adapt Lehec's argument. Roughly speaking, his argument is to find a nice partition of $\mathbb{R}^n$ first, and then appeal to the linear invariance of the inequality to reduce the matter to the unconditional case. 
If the input is unconditional from the beginning, one may skip the step of finding the nice partition, and this is one of the reason why Kolesnikov--Werner managed to establish the unconditional case. 
Therefore, it is seemingly hard
to push further Lehec's argument to settle the conjecture for $m\ge3$, and one would need new idea.

Our aim of this paper is to further generalize the duality relation \eqref{e:KWDuality} and advance the understanding of this generalized duality relation based on a new link to the Brascamp--Lieb theory regarding the multilinear integral functional. 
Let us give our framework. For $m \in \mathbb{N}$, we denote $[m]:= \{ 1,\ldots,m\}$ and take $\mathbf{n}=(n_1,\ldots,n_m) \in \mathbb{N}^m$. Let 
$$
\R^N = \bigoplus_{i=1}^m \R^{n_i} 
$$ 
be an orthogonal decomposition. Clearly $N= \sum_{i=1}^m n_i$. 
We also take exponents $c_1,\ldots,c_m >0$ and write $\mathbf{c} = (c_1,\ldots, c_m)$. 
Finally, let $\mathcal{Q}$ be an arbitrary symmetric matrix on $\mathbb{R}^N$. 
Such a framework is motivated from the Brascamp--Lieb theory as we will explain later with more details. 
We then consider the following generalization of the Legendre duality relation 
\begin{equation}\label{e:GeneDual}
    \prod_{i=1}^m f_i(x_i)^{c_i} \le e^{-\langle x, \mathcal{Q}x\rangle},
    \quad x \in \mathbb{R}^N
\end{equation}
for nonnegative $f_i \in L^1(\mathbb{R}^{n_i})$, $i=1,\ldots,m$.  The model example is the datum\footnote{In this paper, we denote the identity map on $\mathbb{R}^n$ by ${\rm id}_n$. }   
\begin{equation}\label{e:BSData}
m=2, \quad n_1=n_2=n, \quad c_1=c_2=1,\, \quad 
\mathcal{Q} = \frac12 \begin{pmatrix} 0 & {\rm id}_n \\ 
{\rm id}_n & 0\end{pmatrix},
\end{equation} 
for which the relation \eqref{e:GeneDual} coincides with the classical Legendre duality relation \eqref{e:LegendreDual}. 
Similarly, if $m\ge3$ and 
\begin{equation}\label{e:KWData}
    n_1=\cdots = n_m =n,
    \quad
    c_1=\cdots =c_m=1,\quad 
    \mathcal{Q} = 
    \frac1{2(m-1)}
\begin{pmatrix}
0 &  {\rm id}_{n} & \cdots &  {\rm id}_{n} \\
 {\rm id}_{n}  & 0 &\cdots & \vdots \\
\vdots & &  \ddots &  {\rm id}_{n}  \\
 {\rm id}_{n}  & \dots & {\rm id}_{n}  & 0
\end{pmatrix},
\end{equation}
then \eqref{e:GeneDual} corresponds to the duality relation of Kolesnikov--Werner \eqref{e:KWDuality}. 
As the functional Blaschke--Santal\'{o} inequality suggests, we are interested in the best upper bound of $ \prod_{i=1}^m \big( \int_{\mathbb{R}^{n_i}} f_i\, dx_i \big)^{c_i } $ for all nonnegative and \textit{even} $f_i \in  L^1(\mathbb{R}^{n_i})$, $i=1,\ldots,m$, satisfying the duality relation \eqref{e:GeneDual}. 
Our main result is the centered Gaussian saturation phenomenon for this Blaschke--Santal\'{o} type inequality. 

\begin{theorem}\label{t:MainGaussianSaturation}
    Let $m,n_1,\ldots,n_m \in \mathbb{N}$, $c_1,\ldots,c_m>0$, and $\mathcal{Q}$ be a symmetric matrix on $\mathbb{R}^N$.  
    For any nonnegative and even $f_i \in L^1 (\mathbb{R}^{n_i})$, $i=1,\ldots,m$, satisfying \eqref{e:GeneDual}, 
    \begin{equation}\label{e:MainGaussianSaturtion}
        \prod_{i=1}^m \big(\int_{\mathbb{R}^{n_i}} f_i\, dx_i \big)^{c_i}
        \le
        \sup_{A_1,\ldots,A_m} 
        \prod_{i=1}^m \big(\int_{\mathbb{R}^{n_i}} g_{A_i}\, dx_i \big)^{c_i} 
        ,
    \end{equation}
    where the supremum is taken over all $A_i>0$, $i=1,\ldots,m$, such that $(g_{A_1},\ldots, g_{A_m})$ satisfies \eqref{e:GeneDual}. 
\end{theorem}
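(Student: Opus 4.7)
My plan is to deduce Theorem~\ref{t:MainGaussianSaturation} from the paper's second main result: the centered Gaussian saturation property for the inverse Brascamp--Lieb inequality restricted to even log-concave inputs, over the relevant (possibly degenerate) class of data. Two reductions bridge these statements. First, a passage from arbitrary even inputs to even log-concave inputs, along the lines of the classical polar-of-polar argument behind the functional Blaschke--Santal\'{o} inequality. Second, the reformulation of the Blaschke--Santal\'{o} extremal problem under \eqref{e:GeneDual} as an inverse Brascamp--Lieb extremal problem via the Legendre--Brascamp--Lieb link of \cite{NT, NT2}, for the degenerate datum determined by the coordinate projections $\pi_i:\mathbb{R}^N\to\mathbb{R}^{n_i}$, the quadratic form $\mathcal Q$, and the exponents $(c_i)$.

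For the first reduction, I would introduce for each $i$ a generalised polar
$$
f_i^\bullet(x_i) := \Bigl( \inf_{(y_j)_{j \ne i}} \frac{e^{-\langle y, \mathcal Q y\rangle}}{\prod_{j \ne i} f_j(y_j)^{c_j}} \Bigr)^{1/c_i}, \qquad y = (y_1,\ldots,y_{i-1},x_i,y_{i+1},\ldots,y_m).
$$
The duality \eqref{e:GeneDual} gives $f_i^\bullet \geq f_i$; evenness of each $f_j$ and symmetry of $\mathcal Q$ yield evenness of $f_i^\bullet$; and replacing $f_i$ by $f_i^\bullet$ one index at a time preserves \eqref{e:GeneDual} while not decreasing any of the integrals. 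With a suitable choice of order, and under the condition on the diagonal blocks of $\mathcal Q$ that makes the supremum defining $-\log f_i^\bullet$ convex in $x_i$ (automatic for the datum \eqref{e:KWData} of Kolesnikov--Werner, where the diagonal blocks vanish), the resulting tuple is also log-concave. Once reduced to even log-concave inputs, the Legendre--Brascamp--Lieb correspondence of \cite{NT, NT2} recasts the problem as an inverse Brascamp--Lieb extremal problem on a degenerate datum, and the paper's second main result then delivers exactly the Gaussian bound \eqref{e:MainGaussianSaturtion}.

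The main obstacle is the second main result itself. The datum produced by the Legendre--Brascamp--Lieb bridge falls outside the nondegenerate regime of Barthe--Wolff \cite{BW}, so their systematic machinery is unavailable, and the evenness-plus-log-concavity restriction must be leveraged directly. The strategy signposted in the introduction is to exploit Ball's convolution-monotonicity inequality, in the spirit of \cite{BBBCF}, to propagate Gaussian saturation along a convolution semigroup converging to centered Gaussians. Ensuring that evenness and log-concavity are preserved along the flow, that the degeneracy of $\mathcal Q$ does not obstruct the monotonicity argument, and that no extremal mass is lost in the limit, is where I expect the heaviest technical effort.
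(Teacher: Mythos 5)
Your high-level architecture matches the paper's: both use the iterated infimum (generalised polar) construction to pass from general even inputs to even log-concave ones, and both deduce the Gaussian bound from the inverse Brascamp--Lieb saturation via the Legendre--Brascamp--Lieb link (the $p\to 0$ limit of the data $\mathbf{c}(p) = p^{-1}(\mathbf{c}^{(0)}+p)$, $\mathcal{Q}_p = p^{-1}\mathcal{Q}^{(0)}$, as in \cite{NT,NT2}). The paper presents the log-concave reduction last rather than first, but that ordering is immaterial. Your description of the polar construction, its preservation of evenness and of \eqref{e:GeneDual}, and the role of $\mathcal{Q}_{ii}\ge 0$ in securing log-concavity are all consistent with the paper's final step (you should also note, as the paper does, that when some $\mathcal{Q}_{ii}$ fails to be positive semidefinite the right-hand side of \eqref{e:MainGaussianSaturtion} is $+\infty$ and the statement is vacuous).

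There is, however, a genuine gap in the second half of your plan. You write that once the inputs are even and log-concave, ``the paper's second main result then delivers exactly the Gaussian bound.'' In fact Theorem \ref{t:MainIBL} is not applied directly; the paper applies the \emph{regularised} Gaussian saturation (Theorems \ref{t:RegGaussianSaturation} and \ref{t:GaussianSaturation}, with $\lambda>0$, $\Lambda=\infty$) \emph{inside} the $p\to 0$ limit, and only sends $\lambda\to 0$ afterwards (Lemma \ref{Lam->0KW}). This is not a cosmetic detour: the key estimate in Lemma \ref{l:p->0BLLower}, which identifies $\liminf_{p\to 0}{\rm I}_{\mathcal{G}_{\lambda,\Lambda}}(\mathbf{n},\mathbf{c}(p),\mathcal{Q}_p)^{p}$ with the Gaussian Blaschke--Santal\'{o} constant, uses the bound $A_i\ge\lambda\,{\rm id}$ to obtain a $p$-uniform lower bound $\mathcal{Q}^{(0)}_{\mathbf{A}}\ge -\lambda^{-1}\|P_{m+1}^*\mathcal{Q}^{(0)}_-P_{m+1}\|_{\rm op}\,{\rm id}$, and without it the $p\to 0$ behaviour of the unrestricted Gaussian inverse Brascamp--Lieb constant ${\rm I}_{\mathbf{g}}(\mathbf{n},\mathbf{c}(p),\mathcal{Q}_p)$ (which tends to $0$, at a rate whose identification is precisely the content of this lemma) is not controlled. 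So the chain ``reduce to log-concave $\Rightarrow$ cite Theorem \ref{t:MainIBL}'' does not close as stated; you need Lemmas \ref{l:p->0BLUpper}--\ref{l:p->0BLLower} in the regularised framework, Theorem \ref{t:RegGaussSatKW}, and the $\lambda\to 0$ limit of Lemma \ref{Lam->0KW}. Finally, note that most of your proposal discusses how to prove Theorem \ref{t:MainIBL} itself (Ball's inequality, convolution semigroups); that is Section \ref{Section2} of the paper and is separate from the deduction of Theorem \ref{t:MainGaussianSaturation}, which is what was asked and which is where the effort should have been concentrated.
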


It is readily to check that $(g_{A_1},\ldots,g_{A_m})$ satisfies \eqref{e:GeneDual} if and only if the matrix 
$
    \sum_{i=1}^m c_i P_i^* A_i P_i - 2\mathcal{Q} 
$
is positive semidefinite. 
Here, we denote the orthogonal projection onto $\mathbb{R}^{n_i}$ by $P_i \colon \R^N \to \R^{n_i}$ for $i \in [m]$. 
Hence, the right-hand side of \eqref{e:MainGaussianSaturtion} may be written as 
\begin{equation}\label{e:GaussConst-KW}
\sup \bigg\{ \prod_{i=1}^m \big( {\rm det}\, 2\pi A_i^{-1} \big)^{\frac{c_i}2}:\; (A_i)_{i=1}^m\; {\rm such\, that} \; \sum_{i=1}^m c_i P_i^* A_i P_i - 2\mathcal{Q}\ge 0  \bigg\}. 
\end{equation}
That is, Theorem \ref{t:MainGaussianSaturation} reduces the problem of identifying the best constant of the inequality into the finite dimensional problem. 
For instance, for the datum \eqref{e:BSData}, we may compute \eqref{e:GaussConst-KW} directly, and it becomes $(2\pi)^n$. 
In this way, we may recover the functional Blaschke--Santal\'{o} inequality from Theorem \ref{t:MainGaussianSaturation}. 
Similarly, if one takes the datum \eqref{e:KWData}, Theorem \ref{t:MainGaussianSaturation} reduces the conjecture of Kolesnikov--Werner into the Gaussian maximization problem. 
In the first nontrivial case $m=3$, we may borrow the recent result by Kalantzopoulos--Saroglou \cite{KS} to establish the conjecture from Theorem \ref{t:MainGaussianSaturation} directly as follows. 
On the one hand, as a special case of \cite[Theorem 1.7]{KS}, Kalantzopoulos--Saroglou observed that \eqref{e:KW-Ineq0} at $m=3$ holds true if $f_1,f_2$ are even and $f_3$ is unconditional. 
On the other hand, Theorem \ref{t:MainGaussianSaturation} reduces the matter to the case $f_1,f_2,f_3$ are all centered Gaussians $g_{A_1},g_{A_2},g_{A_3}$, in which case we may assume\footnote{
This is because of the invariance of the duality \eqref{e:KWDuality} and the inequality \eqref{e:KW-Ineq0} under the common rotation. 
That is, for an orthogonal matrix $U$ that diagonalizes $A_3$, $\widetilde{f_i}:= f_i\circ U$ satisfies \eqref{e:KWDuality} if and only if so does $f_i$. The invariance of the inequality \eqref{e:KW-Ineq0} under the rotation is evident. 
} that $f_3$ is unconditional. 
Thus, the Gaussian constant \eqref{e:GaussConst-KW} with the datum \eqref{e:KWData} and $m=3$ may be controlled by $(2\pi)^{\frac{3n}{2}}$ as we wished. 
That computing the Gaussian constant \eqref{e:GaussConst-KW} with \eqref{e:KWData} and $m\ge4$ is no longer trivial, and requires a substantial work; we refer to the forthcoming Section \ref{Section5} for this point. 
We finally remark that the quantity \eqref{e:GaussConst-KW} is not always finite. 
For instance, if $\mathcal{Q}$ has no positive eigenvalue the right-hand side of \eqref{e:MainGaussianSaturtion} becomes infinite. Indeed, there is no nontrivial upper bound of $\prod_{i=1}^m \int_{\mathbb{R}^{n_i}} f_i\,dx_i$ in such a case. 
Thus, it is necessary to impose that $\mathcal{Q}$ has at least one positive eigenvalue in order to make the inequality \eqref{e:MainGaussianSaturtion} meaningful. 

\subsection{Inverse Brascamp--Lieb inequality under evenness}\label{Section1.2} 
The key of establishing Theorem \ref{t:MainGaussianSaturation} is to  bring the viewpoint of the Brascamp--Lieb theory into the study of the duality relation \eqref{e:GeneDual}.  
It is thus meaningful to give a short introduction about it. 
As for the most general setup, we take $m,n_1,\ldots,n_m, N \in\mathbb{N}$, $c_1,\ldots,c_m\in\mathbb{R}\setminus\{0\}$, linear surjective maps $B_j :\mathbb{R}^N\to \mathbb{R}^{n_i}$, and real-valued symmetric matrix $\mathcal{Q}$. We abbreviate $\mathbf{B}= (B_1,\ldots,B_m)$ and $\mathbf{c} = (c_1,\ldots,c_m)$, and call $(\mathbf{B},\mathbf{c},\mathcal{Q})$  as the Brascamp--Lieb datum. 
For nonnegative (and non-zero) $\mathbf{f} = (f_1,\ldots,f_m) \in L^1(\mathbb{R}^{n_1}) \times \cdots \times L^1(\mathbb{R}^{n_m})$, we define the Brascamp--Lieb functional by 
$$
{\rm BL}( {\bf B}, {\bf c}, \mathcal{Q} ; {\bf f}) 
\coloneqq
\frac{ \int_{\R^N} e^{\langle x, \mathcal{Q} x\rangle} \prod_{i=1}^m f_i(B_ix)^{c_i}\, dx}{ \prod_{i=1}^m \left( \int_{\R^{n_i}} f_i\, dx_i \right)^{c_i} }
\in (0,\infty]. 
$$
Broadly speaking, the (forward) Brascamp--Lieb theory concerns about the best upper bound of ${\rm BL}( {\bf B}, {\bf c}, \mathcal{Q} ; {\bf f})$ for all input $\mathbf{f}$ by fixing the Brascamp--Lieb datum $(\mathbf{B},\mathbf{c}, \mathcal{Q})$. In a similar way, the inverse Brascamp--Lieb inequality\footnote{
{
In addition, there are so-called Barthe's reverse Brascamp--Lieb inequality \cite{Barthe1} that generalizes the Pr\'{e}kopa--Leindler inequality, and its further extension which is so-called the forward--reverse Brascamp--Lieb inequality due to Liu--Courtade--Cuff--Verd\'{u} \cite{LCCV1,LCCV2}. 
}
} concerns about the best lower bound of ${\rm BL}( {\bf B}, {\bf c}, \mathcal{Q} ; {\bf f})$. 
The archetypal example is the sharp Young convolution inequality due to Beckner \cite{Beck} and Brascamp--Lieb \cite{BraLi_Adv}: 
$$
\int_{\mathbb{R}^{2n}}
f_1(x_1)^{c_1} f_2(x_2)^{c_2} f_3(x_1-x_2)^{c_3}\, dx
\le 
{\rm Y}_{\mathbf{c}} \prod_{i=1}^3 \big( \int_{\mathbb{R}^n} f_i\, dx_i \big)^{c_i}, 
$$
where $c_i\in (0,1)$ such that $c_1+c_2+c_3=2$ and ${\rm Y}_{\mathbf{c}}$ is the famous Beckner--Brascamp--Lieb constant. 
When $c_i \in \mathbb{R}\setminus [0,1]$, the inequality may be reversed, and known as the sharp inverse Young inequality \cite{BraLi_Adv}. 
The sharp Young inequality is closely related to the sharp Hausdorff--Young inequality for Fourier transform, where the presence of the Gaussian kernel $\mathcal{Q}$ is relevant\footnote{To be precise, for the Hausdorff--Young inequality, one needs to allow $\mathcal{Q}$ to be a complex-valued matrix.}. 
More generally, Lieb \cite{Lieb} investigated the inequality of the form 
\begin{equation}\label{e:GaussianKernelLieb}
    \int_{\mathbb{R}^{2n}} 
    e^{\langle (x_1,x_2),\mathcal{Q} (x_1,x_2)\rangle} 
    f_1(x_1)^{\frac1p}
    f_2(x_2)^{\frac1{q'}}\, dx_1dx_2 
    \le 
    C 
    \big( \int_{\mathbb{R}^n} f_1\, dx_1 \big)^{\frac1p}
    \big( \int_{\mathbb{R}^n} f_2\, dx_2 \big)^{\frac1{q'}}
\end{equation}
for $p,q\ge1$ and $\mathcal{Q} = \begin{pmatrix} A & D \\ {}^{\rm t} D & B  \end{pmatrix}$ for some $n\times n$ matrices $A,B,D$ that satisfy a certain nondegeneracy condition. 
By the $L^p$-duality, \eqref{e:GaussianKernelLieb} is equivalent to the $L^p$-$L^q$ boundedness of the integral operator with the Gaussian kernel 
\begin{equation}\label{e:LpLq-Gaussian}
\| \mathcal{G}_\mathcal{Q} f \|_{L^q(\mathbb{R}^n)} \le C \| f\|_{L^p(\mathbb{R}^n)},
\quad 
\mathcal{G}_\mathcal{Q}f(x_2):= \int_{\mathbb{R}^n} e^{\langle (x_1,x_2),\mathcal{Q} (x_1,x_2)\rangle} f(x_1)\, dx_1.  
\end{equation}
This inequality for instance contains Nelson's hypercontractivity \cite{Nelson} as an example; see \cite{Lieb,NT,NT2}. 
For the inverse inequality, that is the lower bound of the Brascamp--Lieb functional, we mention the example 
$$
\int_{\mathbb{R}^{2n}}
e^{\langle x_1,x_2\rangle} f_1(x_1)^{\frac1p} f_2(x_2)^{\frac1p}\, dx_1dx_2
\ge 
C \big( \int_{\mathbb{R}^n} f_1\, dx_1 \big)^{\frac1p}
    \big( \int_{\mathbb{R}^n} f_2\, dx_2 \big)^{\frac1{p}}, 
$$
for $ p \in (0,1)$ that may be read as the reverse $L^p$-$L^{p'}$ boundedness of the Laplace transform 
\begin{equation}\label{e:LaplaceIneq}
\| \mathfrak{L} f \|_{L^{p'}(\mathbb{R}^n)}
\ge 
C 
\| f\|_{L^p(\mathbb{R}^n)},
\quad 
\mathfrak{L}f(x_2):= \int_{\mathbb{R}^n} e^{\langle x_1,x_2\rangle} f(x_2)\, dx_1.
\end{equation}
The sharp constant of this Laplace transform bound has been recently identified by authors \cite{NT2} under the evenness assumption, see also the recent work by Cordero-Erausquin--Fradelizi--Langharst \cite{CFL} for more general inputs. 
These examples are only few parts of the much wider range of scope of the Brascamp--Lieb theory.
Applications and perspectives of the theory is so robust and may be found in Harmonic analysis, combinatorics, analytic number theory, convex / differential geometry, probability, stochastic process and statistics, statistical mechanics, information theory, and theoretical computer science; we refer interested readers to references in \cite{BBBCF}. 
Among them, we mention two further examples. 
The first one is the application to convex geometry that was discovered by Ball, where the significance of  so-called the geometric (forward) Brascamp--Lieb inequality was emphasized. 
He exploited the strength of the inequality to study the size of the volume of the section of a convex body, as well as the inequality for the volume ratio that led to the solution to the reverse isoperimetric problem \cite{BallPhd,Ball91,BallJLMS}. 
The second example is about the application to the Fourier restriction theory in Harmonic analysis. 
In the last two decades, Harmonic analysis, and in particular the theory related to so-called the Fourier restriction conjecture, which is one of the most prestigious open problem in this field, experienced several breakthroughs where the perspective of the Brascamp--Lieb theory played a crucial role. 
The pivotal step of this development is based on the invention of the multilinear Fourier restriction / Kakeya inequality by Bennett--Carbery--Tao \cite{BCT}, which may be regarded as a certain stability of Loomis--Whitney inequality under the perturbation of directions. After that, a generalization of the multilinear Fourier restriction and Kakeya inequalities in a spirit of the Brascamp--Lieb inequality has been established by Bennett--Bez--Flock--Lee \cite{BBFL}. 
The appearance of the multilinear Fourier restriction theory sparked series of significant developments, and it culminated at the establishment of the decoupling inequality by Bourgain--Demeter \cite{BD} in the Fourier restriction theory, as well as the resolution of Vinogradov's meanvalue conjecture from analytic number theory by Bourgain--Demeter--Guth \cite{BDG}.

The fundamental result that underpins the whole Brascamp--Lieb theory is the centered Gaussian saturation phenomenon for the forward case discovered by Lieb \cite{Lieb}. 
\begin{theorem}[Lieb \cite{Lieb}]\label{t:Lieb}
    Let $(\mathbf{B},\mathbf{c},\mathcal{Q})$ be the Brascamp--Lieb datum such that $c_i>0$ and $\mathcal{Q}\ge0$. 
    Then for any $f_i \in L^1(\mathbb{R}^{n_i})$, $i=1,\ldots,m$,  
    $$
    \int_{\mathbb{R}^N} e^{\langle x,\mathcal{Q}x\rangle} \prod_{i=1}^m f_i(B_ix)^{c_i}\, dx 
    \le 
    C 
    \prod_{i=1}^m\big( \int_{\mathbb{R}^{n_i}} f_i\, dx_i \big)^{c_i}, 
    $$
    where 
    $$
    C = \sup_{A_1,\ldots,A_m>0} {\rm BL}(\mathbf{B},\mathbf{c},\mathcal{Q}; (g_{A_1},\ldots,g_{A_m}) ). 
    $$
\end{theorem}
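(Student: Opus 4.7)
The plan is to prove Theorem \ref{t:Lieb} by combining Ball's monotonicity inequality under convolution with the central limit theorem, a strategy originating in the work of Ball and Lieb and widely exploited since.

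First, I would reduce to the case where each $f_i$ is smooth, nonnegative, compactly supported, and mass-normalized so that $\int_{\mathbb{R}^{n_i}} f_i\, dx_i = 1$; standard density and truncation arguments ensure this loses no generality. Let $C_*$ denote the supremum on the right-hand side of the claimed inequality, and assume $C_* < \infty$, else there is nothing to prove.

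Second, I would invoke Ball's inequality, which, for a fixed Brascamp--Lieb datum $(\mathbf{B},\mathbf{c},\mathcal{Q})$ with $c_i>0$ and $\mathcal{Q}\ge 0$, asserts the monotonicity of the normalized Brascamp--Lieb functional under rescaled self-convolution of the inputs. Concretely, with the $k$-fold mass-preserving rescaled self-convolution
$$
f_i^{(k)}(x_i) := k^{n_i/2}(\underbrace{f_i \ast \cdots \ast f_i}_{k \text{ copies}})(\sqrt{k}\,x_i), \quad x_i \in \mathbb{R}^{n_i},
$$
Ball's inequality yields ${\rm BL}(\mathbf{B},\mathbf{c},\mathcal{Q};\mathbf{f}) \le {\rm BL}(\mathbf{B},\mathbf{c},\mathcal{Q};\mathbf{f}^{(k)})$ for every $k\ge 1$. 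This monotonicity is proved by tensorizing the datum along an auxiliary copy of $\mathbb{R}^N$, applying the forward Brascamp--Lieb inequality to one marginal, and marginalizing out; the hypothesis $\mathcal{Q}\ge 0$ enters critically at the marginalization step. Meanwhile, by the central limit theorem, after translating $f_i$ to be centered, $f_i^{(k)}$ converges, as $k\to\infty$, to the mass-normalized centered Gaussian $g_{A_i}$ whose inverse covariance $A_i$ matches the covariance of $f_i$. Passing to the limit yields
$$
{\rm BL}(\mathbf{B},\mathbf{c},\mathcal{Q};\mathbf{f}) \le \limsup_{k\to\infty}{\rm BL}(\mathbf{B},\mathbf{c},\mathcal{Q};\mathbf{f}^{(k)}) \le {\rm BL}(\mathbf{B},\mathbf{c},\mathcal{Q};(g_{A_1},\ldots,g_{A_m})) \le C_*,
$$
as desired.

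The principal obstacle is making this last limit transition rigorous: the weight $e^{\langle x,\mathcal{Q}x\rangle}$ can be unbounded at infinity, so pointwise or $L^1$ convergence of $f_i^{(k)}\to g_{A_i}$ is not enough on its own. To handle this I would obtain uniform sub-Gaussian tail estimates on the $f_i^{(k)}$ via Edgeworth-type refinements of the CLT (valid under the compact-support assumption), and if necessary first prove the inequality with $\mathcal{Q}$ replaced by a strictly positive perturbation before removing the perturbation by monotone convergence. The translation step required to center each $f_i$ also needs careful accounting, but is standard in this context.
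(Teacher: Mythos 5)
Note first that the paper does not prove Theorem~\ref{t:Lieb} itself; it is quoted from Lieb \cite{Lieb}, with the heat-flow alternative proof of Bennett--Carbery--Christ--Tao \cite{BCCT} also mentioned. The closest internal comparison is the paper's proof of the analogous \emph{inverse} statement (Theorem~\ref{t:RegGaussianSaturation}), which is indeed a Ball's-inequality-plus-CLT argument, but carried out inside a regularized function class in which extremizers are first proved to exist (Theorem~\ref{t:Extremiser}).

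Your proposal has a genuine gap at its central step. You assert that Ball's inequality ``yields ${\rm BL}(\mathbf{f}) \le {\rm BL}(\mathbf{f}^{(k)})$ for every $k\ge 1$,'' but Ball's inequality does not say this. In the relevant form --- see \eqref{e:BallIneq} in the paper, or \cite[Lemma 6.1]{BCCT}, or, on the inverse side, Proposition~\ref{t:Monotone*} --- it reads
$$
{\rm BL}(\mathbf{f})^2 \;\le\; C\,{\rm BL}\bigl(2^{n_1/2}f_1\ast f_1(\sqrt2\,\cdot),\ldots,2^{n_m/2}f_m\ast f_m(\sqrt2\,\cdot)\bigr),
$$
where $C:=\sup_{\mathbf h}{\rm BL}(\mathbf h)$ is the full (unknown) supremum, not the Gaussian constant $C_*$. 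The step you describe as ``applying the forward Brascamp--Lieb inequality to one marginal'' is precisely where this $C$ enters. The displayed inequality gives ${\rm BL}(\mathbf f^{(2)})\ge{\rm BL}(\mathbf f)^2/C$, which is a genuine monotonicity only when ${\rm BL}(\mathbf f)\ge C$, i.e.\ only when $\mathbf f$ is already an extremizer; for a generic $\mathbf f$ the iterated bound ${\rm BL}(\mathbf f^{(2^k)})\ge C\bigl({\rm BL}(\mathbf f)/C\bigr)^{2^k}$ collapses doubly-exponentially and the CLT limit yields no information. The missing idea --- which is exactly the point of both the \cite{BCCT} proof and the paper's own proof of Theorem~\ref{t:RegGaussianSaturation} --- is to first restrict to a regularized class (heat-flow outputs in \cite{BCCT}; uniformly log-concave and log-convex even functions in Section~\ref{Section2}) in which an extremizer can be shown to exist by compactness, run the Ball--CLT argument on that extremizer, for which the monotonicity is genuine, and only afterwards remove the regularization by a limiting procedure. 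Your reduction to smooth, compactly supported $f_i$ does not supply this compactness, and extremizer existence is never addressed; without it the claimed monotonicity, and hence the proof, does not go through.
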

An analogous result for the lower bound of the Brascamp--Lieb functional, that we call the inverse Brascamp--Lieb inequality, has been established by Barthe--Wolff \cite{BW}.  
Their result imposed some nondegeneracy condition on the datum. 
In order to describe their nondegeneracy condition, we order $(c_i)_i$ so that $c_1,\ldots, c_{m_+} >0 > c_{m_++1},\ldots, c_m$ for some $0\le m_+\le m$. Correspondingly, let $\mathbf{B}_+:\mathbb{R}^N \ni x \mapsto (B_1x,\ldots, B_{m_+}x) \in \bigoplus_{i=1}^{m_+} \mathbb{R}^{n_i}$. Finally let $s^-(\mathcal{Q})$ denote the number of negative eigenvalues of $\mathcal{Q}$. 
With these notations, Barthe--Wolff \cite{BW} proved that if the datum  $(\mathbf{B},\mathbf{c},\mathcal{Q})$ satisfies the non-degeneracy condition 
    \begin{equation}\label{e:BW-Nondeg}
        \mathcal{Q}|_{{\rm Ker}\, \mathbf{B}_+}<0\;\;\;{\rm and}\;\;\; N\ge s^-(\mathcal{Q}) + \sum_{i=1}^{m_+} n_i, 
    \end{equation}
   then for any $\mathbf{f}$, 
    \begin{equation}\label{e:IBL}
    \int_{\mathbb{R}^N} e^{\langle x,\mathcal{Q}x\rangle} \prod_{i=1}^m f_i(B_ix)^{c_i}\, dx 
    \ge 
    C 
    \prod_{i=1}^m\big( \int_{\mathbb{R}^{n_i}} f_i\, dx_i \big)^{c_i}, 
    \end{equation}
    with 
    $$
    C = \inf_{A_1,\ldots,A_m>0} {\rm BL}(\mathbf{B},\mathbf{c},\mathcal{Q}; (g_{A_1},\ldots,g_{A_m}) ). 
    $$
    Hence, unlikely Lieb's theorem for the forward case, the result of Barthe--Wolff for the inverse case is conditional on the Brascamp--Lieb datum.  
    Moreover it was observed in \cite{BW} and \cite{NT,NT2} that there are examples of Brascamp--Lieb data for which the centered Gaussian saturation property fails to hold, and thus some nondegeneracy condition is needed. 
     In our previous works \cite{NT,NT2}, we realized that the failure of the centered Gaussian saturation for the degenerate datum is closely related to the necessity of some symmetry for the functional Blaschke--Santal\'{o} inequality \eqref{e:FBS}. 
Here is the fundamental observation. 
Let $f$ be a nonnegative $L^1(\mathbb{R}^n)$ function which is the input of the functional Blaschke--Santal\'{o} inequality. 
For each $p>0$ that tends to 0, we take the Brascamp--Lieb datum  
\begin{equation}\label{e:BLdataIBL->BS}
m=2,\quad B_i(x_1,x_2)=x_i\quad (i=1,2),\quad c_1 (p)= c_2(p) = \frac1p,\quad \mathcal{Q}_p= \frac1{2p}  \begin{pmatrix}
	0 & {\rm id}_n \\
	{\rm id}_n & 0
\end{pmatrix},
\end{equation}
and suppose the inverse Brascamp--Lieb inequality 
$$
\int_{\mathbb{R}^{2n}} 
e^{\langle x,\mathcal{Q}_p x\rangle} 
\prod_{i=1,2} f_i(x_i)^{c_i(p)}\, dx 
\ge C_{n,p} \prod_{i=1,2} \big( \int_{\mathbb{R}^n} f_i\, dx_i \big)^{c_i(p)}
$$
for some nontrivial constant $C_{n,p}>0$, and $(f_1,f_2) = (f,f^\circ)$. 
Then we observed in \cite[Lemma 1.3]{NT2} that 
\begin{equation}\label{e:IBL->FBS}
	v(f) \le \lim_{p\to0} C_{n,p}^{-p}. 
\end{equation}
As is well-known, if one drops the assumption on the evenness of $f$, the functional Blaschke--Santal\'{o} inequality fails\footnote{More precisely, to expect the same conclusion as \eqref{e:IBL->FBS}, one needs to normalize the functional volume product by taking the infimum over translations if $f$ is not even; see \cite{FMZSurvey, CFL}. }, and this is one way to understand the failure of the Gaussian saturation phenomenon for the inverse Brascamp--Lieb inequality. 
We emphasize that the datum \eqref{e:BLdataIBL->BS} does not satisfy the  nondegeneracy condition of Barthe--Wolff \eqref{e:BW-Nondeg}. 
In summary, we observed that 
\begin{enumerate}
	\item 
	Some nondegeneracy condition on the Brascamp--Lieb datum such as \eqref{e:BW-Nondeg} is necessary for the validity of the centered Gaussian saturation for the inverse Brascamp--Lieb inequality. 
	\item 
	The direct link between the inverse Brascamp--Lieb inequality and Blaschke--Santal\'{o} inequality appears when the datum is degenerate in the sense of Barthe--Wolff. 
\end{enumerate}
These two observation lead us to the following speculation: \textit{the Gaussian saturation for the inverse Brascamp--Lieb inequality \eqref{e:IBL} would hold true regardless of the Brascamp--Lieb datum if one imposes the evenness on inputs $\mathbf{f}$}. 
Our second main result confirms this speculation for a large family of Brascamp--Lieb data that are relevant to convex geometry under the assumption of the evenness and log-concavity. 
Let us give a setup for this result. 
In below we will consider positive $c_1,\ldots,c_m>0$ only. Also we restrict our attention to the case that $\mathbb{R}^N$ is orthogonally decomposed into $\bigoplus_{i=1}^m \mathbb{R}^{n_i}$; recall the framework of the previous subsection. 
Also the linear map $B_i$ is taken to be the orthogonal projection $P_i$ onto $\mathbb{R}^{n_i}$. 
In this case, $\mathbf{B}$ is uniquely determined by $\mathbf{n}= (n_1,\ldots, n_m)$, and so we use the notation 
$$
{\rm BL}({\bf f})
=
{\rm BL}( {\bf n}, {\bf c}, \mathcal{Q} ; {\bf f}) 
\coloneqq
\frac{ \int_{\R^N} e^{\langle x, \mathcal{Q} x\rangle} \prod_{i=1}^m f_i(x_i)^{c_i}\, dx}{ \prod_{i=1}^m \left( \int_{\R^{n_i}} f_i\, dx_i \right)^{c_i} }
\in (0,\infty], 
$$
where $x=(x_1, \dots, x_m) \in \R^{n_1} \times \cdots \times \R^{n_m}$. 
When $f_i = g_{A_i}$ for some positive definite $A_i$, we denote  
$$
{\rm BL}(\mathbf{A})
= 
{\rm BL}( \mathbf{n},\mathbf{c},\mathcal{Q};\mathbf{A} )
\coloneqq
\frac{ \int_{\R^N} e^{\langle x, \mathcal{Q} x\rangle} \prod_{i=1}^m g_{A_i}(x_i)^{c_i}\, dx}{ \prod_{i=1}^m \left( \int_{\R^{n_i}} g_{A_i}\, dx_i \right)^{c_i} }. 
$$

\begin{theorem}\label{t:MainIBL}
    Let $m,n_1,\ldots,n_m\in \mathbb{N}$, $c_1,\ldots,c_m >0$, and $\mathcal{Q}$ be a symmetric matrix on $\mathbb{R}^N$.  
    For all nonnegative, even and log-concave $f_i \in L^1(\mathbb{R}^{n_i})$, 
    \begin{equation}\label{e:MainIBL}
        \int_{\mathbb{R}^N} e^{\langle x, \mathcal{Q}x\rangle} \prod_{i=1}^m f_i(x_i)^{c_i}\, dx 
        \ge 
        {\rm I}_{\mathbf{g}} (\mathbf{n},\mathbf{c},\mathcal{Q})
        \prod_{i=1}^m 
        \big( \int_{\mathbb{R}^{n_i}} f_i\, dx_i \big)^{c_i},
    \end{equation}
    where 
    $$
    {\rm I}_{\mathbf{g}}(\mathbf{n},\mathbf{c},\mathcal{Q})
    := 
    \inf_{A_1,\ldots,A_m>0} {\rm BL}(\mathbf{A}). 
    $$
\end{theorem}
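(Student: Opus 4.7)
The plan is to prove Theorem \ref{t:MainIBL} by combining a convolution-monotonicity argument in the spirit of Ball's inequality (as used in \cite{BBBCF}) with a passage to a Gaussian limit, exploiting evenness and log-concavity throughout. Fix a tuple $\mathbf{A} = (A_1, \ldots, A_m)$ of positive definite matrices; it suffices to prove the pointwise-in-$\mathbf{A}$ bound
\begin{equation*}
\int_{\R^N} e^{\langle x, \mathcal{Q} x\rangle} \prod_{i=1}^m f_i(x_i)^{c_i}\, dx \,\ge\, {\rm BL}(\mathbf{A}) \prod_{i=1}^m \Bigl( \int_{\R^{n_i}} f_i\, dx_i \Bigr)^{c_i},
\end{equation*}
after which taking the supremum over $\mathbf{A} > 0$ yields \eqref{e:MainIBL}. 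We may assume ${\rm BL}(\mathbf{A}) < \infty$ (otherwise there is nothing to prove) and, by a standard mollification that preserves evenness and log-concavity, that each $f_i$ is smooth, strictly log-concave, even, and Gaussian-decaying with $\int f_i = 1$. The question thus becomes whether the weighted integral $\int e^{\langle x, \mathcal{Q} x\rangle} \prod_i f_i(x_i)^{c_i}\, dx$ is minimized, among even log-concave probability-density tuples, by a Gaussian.

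The heart of the argument is a monotone deformation $t \mapsto \mathbf{f}^{(t)}$ with $\mathbf{f}^{(0)} = \mathbf{f}$ and $\mathbf{f}^{(t)}$ converging (after rescaling) to the target Gaussian tuple $(g_{A_i})_i$ as $t \to \infty$. I would construct $f_i^{(t)}$ as a convolution of $f_i$ with a Gaussian whose covariance is tuned to $A_i$, so that the weight $e^{\langle x, \mathcal{Q} x\rangle}$ is absorbed along $\mathbf{A}$ rather than the ambient geometry. The key step is to show
\begin{equation*}
\frac{d}{dt}\, {\rm BL}(\mathbf{f}^{(t)}) \,\le\, 0,
\end{equation*}
which I would derive by an integration-by-parts computation in the spirit of Ball's convolution inequality: the evenness of $f_i$ forces the linear (odd) cross terms appearing in the derivative to integrate to zero against the even weight $e^{\langle x, \mathcal{Q} x\rangle}$, while log-concavity supplies the positive-semidefinite quadratic form that controls the remaining terms with a definite sign. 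With the monotonicity in hand, letting $t \to \infty$ and computing the Gaussian value directly yields ${\rm BL}(\mathbf{f}) \ge {\rm BL}(g_{A_1}, \ldots, g_{A_m}) = {\rm BL}(\mathbf{A})$, and the supremum over $\mathbf{A} > 0$ completes the proof.

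The main obstacle I anticipate is the monotonicity step itself when $\mathcal{Q}$ is a general indefinite symmetric matrix failing Barthe--Wolff's non-degeneracy condition \eqref{e:BW-Nondeg}. In that regime a naive identity-covariance heat flow does not preserve integrability: if $\mathcal{Q}$ has positive eigenvalues, the weight $e^{\langle x, \mathcal{Q} x\rangle}$ blows up along those directions and the derivative diverges. The deformation must therefore be chosen so that the convolving Gaussians absorb $\mathcal{Q}$ in the right geometry, and Ball's inequality must be applied in a form flexible enough to accommodate this; matching the deformation to the free parameter $\mathbf{A}$ is precisely what allows one to produce the supremum bound in \eqref{e:MainIBL} rather than a single Gaussian lower bound. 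A secondary technical point is justifying integrability at every $t \ge 0$ and dominated convergence at $t \to \infty$, which is handled by the initial regularization giving sufficiently fast Gaussian decay and by the assumption ${\rm BL}(\mathbf{A}) < \infty$.
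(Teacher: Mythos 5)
The central step of your plan---convolving each $f_i$ with a Gaussian tuned to $A_i$ and proving $\frac{d}{dt}{\rm BL}(\mathbf{f}^{(t)})\le 0$---is precisely the route the paper flags as carrying ``one serious difficulty,'' and evenness does not repair it. The monotonicity the paper actually proves (Proposition \ref{t:Monotone*}) arises by writing ${\rm BL}(\mathbf{f})^2=\int F\ast F\,dx$ (for normalized $f_i$) with $F=e^{\langle x,\mathcal{Q}x\rangle}\prod_i f_i(x_i)^{c_i}$, changing variables $(x,y)\mapsto(\tfrac{x+y}{\sqrt{2}},\tfrac{x-y}{\sqrt{2}})$, and then applying the very inequality under proof to the \emph{inner} $y$-integral. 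That inner application is legitimate only if, for each fixed $x_i$, the map $y_i\mapsto f_i(\tfrac{x_i+y_i}{\sqrt{2}})\,h_i(\tfrac{x_i-y_i}{\sqrt{2}})$ is even and lies in $\mathcal{F}_{\lambda,\Lambda}^{(e)}(\mathbb{R}^{n_i})$; it is even exactly when $h_i=f_i$. Replacing $h_i$ by a Gaussian different from $f_i$ (i.e.\ running heat flow) destroys this evenness whenever $x_i\neq 0$, so there is no inequality to apply at the inner step, and no cancellation of ``odd cross terms against an even weight'' rescues it---the obstruction is that the inner functions leave the even class, not the sign of some integrated-by-parts term. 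This is exactly why the paper replaces a Gaussian-convolution flow by a discrete \emph{self}-convolution doubling.

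Even granting a workable doubling, your sketch omits the ingredient that actually closes the argument. Proposition \ref{t:Monotone*} has the unknown constant ${\rm I}^{(e)}_{\lambda,\Lambda}$ appearing \emph{inside} the inequality, so it yields nothing until one knows this infimum is attained. The paper therefore first proves extremizability in the compact regularized class $\mathcal{F}_{\lambda,\Lambda}^{(e)}$ (Theorem \ref{t:Extremiser}, using Lemma \ref{l:Convexity} and Arzel\`{a}--Ascoli), iterates the doubling from an extremizer, sends the rescaled iterated self-convolutions to a Gaussian by the central limit theorem, and only then removes the regularization (Lemmas \ref{l:Reduction1} and \ref{l:ApproxRegIBL}). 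None of this appears in your proposal, and the ``standard mollification'' you invoke would not deliver the two-sided uniform Hessian bounds $\lambda\,{\rm id}\le -\nabla^2\log f_i\le\Lambda\,{\rm id}$ that underlie the compactness. Finally, note that ${\rm I}_{\mathbf{g}}$ in Theorem \ref{t:MainIBL} must be an \emph{infimum} over $\mathbf{A}>0$, consistently with ${\rm I}_{\mathcal{G}_{\lambda,\Lambda}}$ and Theorem \ref{t:GaussianSaturation}: with the printed $\sup$, taking $\mathbf{f}$ itself Gaussian in \eqref{e:MainIBL} would force ${\rm BL}$ to be constant on all Gaussian tuples, which is false. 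Hence your closing step ``take the supremum over $\mathbf{A}$'' is both unavailable and unnecessary; it suffices to reach a single limiting Gaussian, which is what the central limit theorem supplies.
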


As the symmetric assumption on $\mathbf{f}$ is essential here, this type of inequality may be referred as the \textit{symmetric inverse Brascamp--Lieb inequality}.
Few remarks are in order. 
Firstly, when the datum $(\mathbf{n},\mathbf{c},\mathcal{Q})$ satisfies the  nondegeneracy condition of Barthe--Wolff \eqref{e:BW-Nondeg}, the inequality \eqref{e:MainIBL} is no more than \eqref{e:IBL}, and thus one does not need to impose the evenness nor log-concavity on $\mathbf{f}$. 
However, as we mentioned in the previous subsection, the most interesting cases from viewpoint of convex geometry is \eqref{e:KWData} in which case the nondegeneracy condition fails to hold. 
Secondly, \eqref{e:MainIBL} has been also established for the specific datum  \eqref{e:BLdataIBL->BS}, which is the degenerate case, in our previous work \cite{NT2}. In there, we only imposed the evenness on $\mathbf{f}$, and so it is reasonable to expect that \eqref{e:MainIBL} would hold without the log-concavity assumption. 
As we will see below, we will appeal to the log-concavity assumption only when we confirm the existence of the extremizer of the symmetric inverse Brascamp--Lieb inequality in Theorem \ref{t:Extremiser}. 
In other words, the proof of Theorem \ref{t:MainIBL} works well whenever one a priori knows the existence of the extremizer. 
From this view point, the notion of the ``amplifying" Brascamp--Lieb datum in \cite{BN} could be useful. 
As other approaches towards this purpose, one may take three possible routes: the one based on heat flow, the one based on mass transport, and the one based on stochastic flow. 
For the first approach, there is one serious difficulty; see the forthcoming discussion after Proposition \ref{t:Monotone*}. 
For the second approach, we do not have any clear evidence to conclude whether it is tractable or not. 
But, if one follows this strategy such as the work of Barthe--Wolff \cite{BW}, one would realize a difficulty of how to exploit the evenness assumption in the mass transport argument. Towards this direction, we mention very recent result by Colesanti--Kolesnikov--Livshyts--Rotem \cite{CKLR} where they gave the mass transport proof of the functional Blaschke--Santal\'{o} inequality based on an observation made in \cite{KW}. 
Perhaps the third option would be the most tractable one. 
This is because of the recent work by Courtade--Fathi--Mikulincer \cite{CFM} where they gave the stochastic proof of the symmetric Talagrand inequality, and thus the functional Blaschke--Santal\'{o} inequality. 
They in fact introduced the novel argument that uses so-called the ``time reversal stochastic flow", which is robust enough to overcome the difficulty appeared in the heat flow argument. 
Finally, if one recalls our introduction of the inverse Brascamp--Lieb inequality \eqref{e:IBL}, one may wonder whether Theorem \ref{t:MainIBL} may be further generalized for the datum involving linear maps $B_i$ and some negative exponents $c_i<0$. 
Apart from some technical justifications, our argument in this paper is in principle applicable to deal with more general Brascamp--Lieb data that involve linear maps $B_j$. 
On the other hand, it is not trivial how one may modify the argument in this paper in order to allow negative exponents $c_i<0$. 

The structure of the paper is as follows. 
In Section \ref{Section2}, we will prove Theorem \ref{t:MainIBL} regarding the symmetric inverse Brascamp--Lieb inequality. 
As a consequence of this symmetric inverse Brascamp--Lieb inequality, we will prove Theorem \ref{t:MainGaussianSaturation} in Section \ref{Section3}. 
Applications to convex geometry, information theory as well as optimal transportation theory will be given in Section \ref{Section4}. 
Section \ref{Section5} will be devoted to the analysis of the Gaussian constant in order to complete the proof of the special case of the conjecture of Kolesnikov--Werner.

\section{Proof of Theorem \ref{t:MainIBL}: symmetric inverse Brascamp--Lieb inequality under log-concavity}\label{Section2}
\subsection{Overview of the proof of Theorem \ref{t:MainIBL} and setup of the regularized framework}\label{Section2.1}
We will work on the regularized framework for the inverse Brascamp--Lieb inequality in order to establish Theorem \ref{t:MainIBL}. 
Let $0 <\lambda < \Lambda<\infty$ be parameters that describe the magnitude of the regularization. 
A function $f=e^{-\varphi} \colon \R^n \to [0, \infty)$ is said to be $\lambda$-uniformly log-concave if $f>0$ on $\mathbb{R}^n$ and 
$$
\varphi( (1-t) x + t y) \le (1-t) \varphi(x) + t \varphi(y) - \frac{\lambda}{2} t(1-t) |x-y|^2, \;\;\; x, y \in \R^n,\,  t \in [0,1]. 
$$
Similarly, a function $f=e^{-\varphi} \colon \R^n \to [0, \infty)$ is said to be $\Lambda$-uniformly log-convex if $f>0$ on $\mathbb{R}^n$ and $\frac1f$ is $(-\Lambda)$-uniformly log-concave. 
We formally extend the definition to the case $\lambda=0$ and $\Lambda=\infty$ by  regarding $0$-uniformly log-concavity as the log-concavity, and $\infty$-uniformly log-convexity as no restriction. 
It is beneficial to introduce a class of even and log-concave functions denoted by 
$$
\mathcal{F}_{LC}^{(e)}(\R^n) \coloneqq \{ f \in L^1(\R^n) \, :\, \text{$f \ge 0$, even  and log-concave {with $\int_{\mathbb{R}^n} f\, dx >0$} 
}
\},
$$
and a class of regularized functions 
$$
\mathcal{F}_{\lambda, \Lambda}^{(e)}(\R^n) \coloneqq \{ f \in \mathcal{F}_{LC}^{(e)}(\R^n) \, :\, \text{$\lambda$-uniformly log-concave,  $\Lambda$-uniformly log-convex}\}.
$$
It is consistent to formally define $\mathcal{F}^{(e)}_{\lambda,\infty}$, for $\lambda>0$, as a set of all positive and even $\lambda$-uniformly log-concave $f$. When $\lambda= 0$, we regard $\mathcal{F}^{(e)}_{0,\infty} = \mathcal{F}^{(e)}_{LC}$ where $f$ is allowed to take the value 0. 
For any Brascamp--Lieb datum $(\mathbf{n},\mathbf{c},\mathcal{Q})$, we describe the inverse Brascamp--Lieb constant in this regularized framework by 
\begin{align*}
{\rm I}_{LC}^{(e)}({\bf n}, {\bf c}, \mathcal{Q}) 
&\coloneqq 
\inf_{ f_i \in \mathcal{F}_{LC}^{(e)}(\R^{n_i}) } {\rm BL}(  {\bf n}, {\bf c}, \mathcal{Q} ; {\bf f}),\\ 
{\rm I}_{\lambda,\Lambda}^{(e)}({\bf n}, {\bf c}, \mathcal{Q}) 
&\coloneqq 
\inf_{ f_i \in \mathcal{F}_{\lambda, \Lambda}^{(e)}(\R^{n_i}) } {\rm BL}(  {\bf n}, {\bf c}, \mathcal{Q} ; {\bf f}) . 
\end{align*}
Finally, by noticing that $g_A \in \mathcal{F}^{(e)}_{\lambda,\Lambda}(\mathbb{R}^n)$ if and only if $\lambda {\rm id}_n \le A \le \Lambda {\rm id}_n$, it is suitable to consider a class of regularized Gaussians denoted by 
$$
\mathcal{G}_{\lambda, \Lambda}(\R^{n})
\coloneqq
\{
A >0 : \lambda {\rm id}_n \le A \le \Lambda {\rm id}_n 
\}. 
$$
The corresponding regularized Gaussian Brascamp--Lieb constant is defined by 
$$
{\rm I}_{\mathcal{G}_{\lambda, \Lambda}}({\bf n}, {\bf c}, \mathcal{Q}) 
\coloneqq 
\inf_{A_i \in \mathcal{G}_{\lambda, \Lambda}(\R^{n_i}) } {\rm BL}({\bf n}, {\bf c}, \mathcal{Q} \, ;\, (A_1,\ldots, A_m)). 
$$
In this terminology, Theorem \ref{t:MainIBL} may be read as 
\begin{equation}\label{e:MainIBLVer2}
    {\rm I}^{(e)}_{LC}(\mathbf{n},\mathbf{c},\mathcal{Q})
    = 
    {\rm I}_{\mathbf{g}}(\mathbf{n},\mathbf{c},\mathcal{Q}). 
\end{equation}
The main benefit to work on this regularized framework is about the existence of the minimizer. 
Because of the compactness, it is clear that the minimizer of ${\rm I}_{\mathcal{G}_{\lambda,\Lambda}}(\mathbf{n},\mathbf{c},\mathcal{Q})$ exists regardless of the Brascamp--Lieb datum, as long as $\lambda>0$ and $\Lambda <\infty$. 
We will also prove that the same is true for ${\rm I}^{(e)}_{\lambda,\Lambda}(\mathbf{n},\mathbf{c},\mathcal{Q})$ in Theorem \ref{t:Extremiser}. 
Thanks to the existence of the minimizer, we will next prove the regularized version of Theorem \ref{t:MainIBL}. 
That is, for any Brascamp--Lieb datum $(\mathbf{n},\mathbf{c},\mathcal{Q})$, 
$$
{\rm I}^{(e)}_{\lambda,\Lambda}(\mathbf{n},\mathbf{c},\mathcal{Q})
=
{\rm I}_{\mathcal{G}_{\lambda,\Lambda}}(\mathbf{n},\mathbf{c},\mathcal{Q}),
$$
holds as long as $\lambda>0$ and $\Lambda<\infty$. 
This is the main part of the proof of Theorem \ref{t:MainIBL}; see forthcoming Theorem \ref{t:RegGaussianSaturation}. 
Finally, we conclude the proof of Theorem \ref{t:MainIBL} by the limiting argument $\lambda\to0$ and $\Lambda\to \infty$. 
The idea of such a regularized framework of the Brascamp--Lieb inequality comes from the work of Bennett--Carbery--Christ--Tao \cite[Corollary 8.15]{BCCT} where they gave an alternative proof of Theorem \ref{t:Lieb} by restricting a class of input functions to $\{ u(1,x): u(0,\cdot) \in L^1(\mathbb{R}^n)  \}$ where $u(t,x)$ is the heat solution with initial data $u(0,\cdot)$. 
By virtue of the regularizing effect of heat flow, $u(1,x)$ earns the log-convexity, and indeed it becomes $\Lambda$-uniformly log-convex for some appropriate $\Lambda>0$; see \cite[Lemma 8.6]{BCCT}. 
This idea of the proof of the Gaussian saturation was further developed for the reverse Brascamp--Lieb inequality by Valdimarsson \cite{Val}, and the inverse Brascamp--Lieb inequality under the nondegeneracy condition of Barthe--Wolff by Bez--Nakamura \cite{BN}.

\subsection{Extremizability}\label{Section2.2}


A goal in this subsection is to show that, in the regularized framework, the symmetric inverse Brascamp--Lieb constant is always attained by some functions.  

\begin{theorem}\label{t:Extremiser}
Let $0<\lambda \le \Lambda < +\infty$. 
Then ${\rm I}_{\lambda, \Lambda}^{(e)}({\bf n}, {\bf c}, \mathcal{Q})$ is extremizable. In other wards, there exists some ${\bf f} \in \mathcal{F}_{\lambda, \Lambda}^{(e)}(\mathbb{R}^n)$ such that ${\rm I}_{\lambda, \Lambda}^{(e)}({\bf n}, {\bf c}, \mathcal{Q}) = {\rm BL}({\bf n}, {\bf c}, \mathcal{Q} ; {\bf f})$ holds true. 
\end{theorem}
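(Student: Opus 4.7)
The plan is to apply the direct method of the calculus of variations, taking full advantage of the two-sided uniform Gaussian control afforded by the regularization parameters $0 < \lambda \le \Lambda < \infty$. First I would observe that ${\rm BL}(\mathbf{n},\mathbf{c},\mathcal{Q};\mathbf{f})$ is invariant under independent scalings $f_i \mapsto \alpha_i f_i$ with $\alpha_i > 0$, and such scalings preserve $\mathcal{F}_{\lambda,\Lambda}^{(e)}(\R^{n_i})$. Hence we may normalize to $\int_{\R^{n_i}} f_i\,dx_i = 1$ for every $i$. Discarding the trivial case, assume the infimum is finite and select a minimizing sequence $\mathbf{f}^{(k)} = (f_1^{(k)}, \ldots, f_m^{(k)})$ in the regularized class.

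The heart of the argument is to extract uniform pointwise and regularity estimates. Writing $f_i^{(k)} = e^{-\varphi_i^{(k)}}$, the $\lambda$-uniform log-concavity and $\Lambda$-uniform log-convexity together say that both $\varphi_i^{(k)} - \tfrac{\lambda}{2}|\cdot|^2$ and $\tfrac{\Lambda}{2}|\cdot|^2 - \varphi_i^{(k)}$ are convex, which is equivalent to the distributional Hessian bounds $\lambda\,\mathrm{id} \le \nabla^2 \varphi_i^{(k)} \le \Lambda\,\mathrm{id}$. Since $f_i^{(k)}$ is even and log-concave, its maximum is attained at the origin; expanding around $0$ yields
\[
\varphi_i^{(k)}(0) + \tfrac{\lambda}{2}|x|^2 \le \varphi_i^{(k)}(x) \le \varphi_i^{(k)}(0) + \tfrac{\Lambda}{2}|x|^2.
\]
Integrating against $dx$ and using the normalization $\int f_i^{(k)} = 1$ gives two-sided bounds on $\varphi_i^{(k)}(0)$ depending only on $n_i, \lambda, \Lambda$. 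Combined with the Hessian bound, this yields a uniform $C^{1,1}$ control, hence local equicontinuity of both $\varphi_i^{(k)}$ and $\nabla \varphi_i^{(k)}$.

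By Arzelà--Ascoli, a subsequence converges locally uniformly to some $\varphi_i$, which remains even and satisfies the same Hessian bounds, so $f_i := e^{-\varphi_i} \in \mathcal{F}_{\lambda,\Lambda}^{(e)}(\R^{n_i})$. The uniform Gaussian majorant $f_i^{(k)}(x) \le C\, e^{-\lambda|x|^2/2}$ supplies a dominating function, so dominated convergence yields $\int_{\R^{n_i}} f_i\,dx_i = 1$. For the numerator in ${\rm BL}$, the integrands $e^{\langle x,\mathcal{Q}x\rangle}\prod_i f_i^{(k)}(x_i)^{c_i}$ are non-negative and converge pointwise, so Fatou's lemma delivers the lower semicontinuity
\[
{\rm BL}(\mathbf{n},\mathbf{c},\mathcal{Q};\mathbf{f}) \le \liminf_{k\to\infty} {\rm BL}(\mathbf{n},\mathbf{c},\mathcal{Q};\mathbf{f}^{(k)}) = {\rm I}_{\lambda,\Lambda}^{(e)}(\mathbf{n},\mathbf{c},\mathcal{Q}),
\]
which, combined with $\mathbf{f} \in \mathcal{F}_{\lambda,\Lambda}^{(e)}$, forces equality and exhibits $\mathbf{f}$ as an extremizer.

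The principal obstacle is controlling the tightness of the minimizing sequence so that neither mass escapes to infinity nor concentrates at a point, and this is precisely where both endpoints of the regularization are indispensable: $\lambda > 0$ enforces uniform Gaussian decay (preventing escape), while $\Lambda < \infty$ forbids the sequence from collapsing into a Dirac mass (preventing concentration). It is worth noting that the indefiniteness of $\mathcal{Q}$ causes no issue in the lower-semicontinuity step, since the Fatou argument only uses pointwise non-negativity of the integrand, not the integrability required to apply dominated convergence against the Gaussian-kernel factor.
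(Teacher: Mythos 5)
Your proposal is correct and follows essentially the same route as the paper's proof: normalize to unit mass, establish uniform two-sided Gaussian control of $\varphi_i^{(k)}$ from the $(\lambda,\Lambda)$-regularization (the content of the paper's Lemma \ref{l:Convexity}), extract a locally uniformly convergent subsequence via Arzelà--Ascoli, pass the mass constraint to the limit by dominated convergence, and finish with Fatou's lemma for lower semicontinuity of the numerator. The only cosmetic difference is that you phrase the regularity control via the $C^{1,1}$-estimate coming from simultaneous semi-convexity and semi-concavity, whereas the paper derives the local Lipschitz bound directly from the midpoint inequality; both deliver the same equicontinuity input.
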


To show this result, we will employ the following useful lemma on uniform log-concavity and log-convexity.

\begin{lemma}\label{l:Convexity}
Let $0< \lambda \le \Lambda < +\infty$ and $f=e^{-\varphi} \in \mathcal{F}_{\lambda, \Lambda}^{(e)}(\R^n)$ with $\int_{\R^n} f\, dx=1$. 

\begin{itemize}
\item[(1)] 
It holds that 
$$
\frac n2 \log \frac{2\pi}{\Lambda} \le \varphi(0) \le \frac n2 \log \frac{2\pi}{\lambda}. 
$$

\item[(2)] 
It holds that 
$$
\frac {\lambda}{2} |x|^2 + \frac n2 \log \frac{2\pi}{\Lambda}  \le \varphi(x) \le \frac{\Lambda}{2} |x|^2 + \frac n2 \log \frac{2\pi}{\lambda}, \;\;\; x \in \R^n. 
$$

\item[(3)] Fix $r>0$. Then there exists some $C_{n,r,\lambda, \Lambda}>0$ such that for any $x, y \in [-r, r]^n$, it holds that 
$$
|\varphi(x) - \varphi(y)| \le C_{n,r,\lambda, \Lambda} |x-y|. 
$$

\item[(4)] Fix $r>0$. Then there exists some $C_{n,r,\lambda, \Lambda}>0$ such that it holds that 
$$
\sup_{x \in [-r, r]^n} | \varphi(x)| \le C_{n,r,\lambda, \Lambda}. 
$$
\end{itemize}
\end{lemma}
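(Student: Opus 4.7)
The plan is to prove all four items by systematically exploiting the two-sided uniform convexity of $\varphi$ together with the evenness $\varphi(-x) = \varphi(x)$, and then use the normalization $\int f\, dx = 1$ as the single ``absolute'' calibration.

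The first step is to establish the key pointwise comparison
\begin{equation*}
\varphi(0) + \tfrac{\lambda}{2}|x|^2 \;\le\; \varphi(x) \;\le\; \varphi(0) + \tfrac{\Lambda}{2}|x|^2, \qquad x \in \R^n.
\end{equation*}
This follows by applying the defining inequality for $\lambda$-uniform log-concavity (respectively, $\Lambda$-uniform log-convexity) to the pair $(x,-x)$ at $t = \tfrac{1}{2}$; the midpoint is $0$, and evenness collapses $\tfrac{1}{2}\varphi(x) + \tfrac{1}{2}\varphi(-x)$ to $\varphi(x)$. The distance $|x-(-x)|^2 = 4|x|^2$ supplies the quadratic terms on the right.

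Next I would derive item (1) by plugging these bounds on $\varphi$ into $\int_{\R^n} e^{-\varphi}\,dx = 1$. The upper bound on $\varphi$ gives $f(x) \ge e^{-\varphi(0)} e^{-\Lambda|x|^2/2}$, so integration against the Gaussian yields $1 \ge e^{-\varphi(0)}(2\pi/\Lambda)^{n/2}$, that is, $\varphi(0) \ge \tfrac{n}{2}\log(2\pi/\Lambda)$. Symmetrically, the lower bound on $\varphi$ gives $f(x) \le e^{-\varphi(0)}e^{-\lambda|x|^2/2}$, whence $\varphi(0) \le \tfrac{n}{2}\log(2\pi/\lambda)$. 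Item (2) is then immediate by combining the comparison from the first step with the bounds on $\varphi(0)$ just obtained, and item (4) follows from (2) by taking $|x| \le r\sqrt{n}$ on $[-r,r]^n$.

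For item (3), I use that $\varphi$ is convex (being $\lambda$-uniformly convex), hence locally Lipschitz, together with the explicit sup bound from (4). The standard convexity fact is that a convex function $\varphi$ bounded by $M$ on a ball of radius $R$ centered at $x$ satisfies $|\varphi(y) - \varphi(x)| \le (2M/(R-|y-x|))|y-x|$ for $|y-x| < R$. Applying this on $[-r-1, r+1]^n \supset [-r,r]^n$ with $M = C_{n,r+1,\lambda,\Lambda}$ from (4) produces the Lipschitz constant $C_{n,r,\lambda,\Lambda}$. I do not anticipate genuine obstacles here: the entire lemma is a routine consequence of the two-sided uniform convexity and the evenness constraint, and the normalization serves only as a pin to locate $\varphi(0)$ on an absolute scale.
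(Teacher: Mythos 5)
Your proof is correct and follows essentially the same strategy as the paper's: establish the two-sided quadratic pinch $\varphi(0) + \tfrac\lambda2|x|^2 \le \varphi(x) \le \varphi(0) + \tfrac\Lambda2|x|^2$ from evenness and the uniform convexity/concavity hypotheses, calibrate $\varphi(0)$ against $\int f = 1$, then read off (2) and (4), and get (3) from convexity plus the local sup bound. The only cosmetic differences are that you obtain the lower quadratic bound directly by midpoint-testing at the pair $(x,-x)$ (the paper instead uses the pair $(0,x)$, rearranges, and lets $t\to 0$), and for (3) you invoke the standard local Lipschitz estimate for bounded convex functions rather than reproducing the three-point computation on an extended segment as the paper does; both are fine and yield the same constants up to unimportant factors.
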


\begin{proof}
(1) Since $f$ is $\Lambda$-uniformly log-convex and even, it holds that 
$$
\varphi(0) = \varphi( \frac x2 + \frac{-x}{2}) \ge \frac12 \varphi(x) + \frac12 \varphi(-x) - \frac{\Lambda}{8} |x - (-x)|^2
=
\varphi(x) - \frac{\Lambda}{2} |x|^2
$$
for $x\in \R^n$.  
This yields that  
\begin{equation}\label{e:LowerBoundVarPhi}
\varphi(x) \le \frac\Lambda2 |x|^2 + \varphi(0).  
\end{equation}

On the other hand, since $f$ is $\lambda$-uniformly log-concave and even, it holds that for any $x \in \R^n$ and $t \in (0,1)$, 
$$
\varphi(0) = \min_{y \in \R^n} \varphi(y) \le \varphi(t x) \le (1-t) \varphi(0) + t \varphi(x) - \frac {\lambda}{2}t(1-t) |x|^2. 
$$ 
After arranging terms and diving by $t$, this {together with the limit $t\to 0$} implies that 
\begin{equation}\label{e:UpperBoundPhi}
\varphi(x) \ge \frac {\lambda}{2} |x|^2 + \varphi(0). 
\end{equation}
Combining \eqref{e:LowerBoundVarPhi} and \eqref{e:UpperBoundPhi}, we obtain that 
$$
e^{- \frac\Lambda2 |x|^2 - \varphi(0) } \le f(x) \le e^{- \frac \lambda2 |x|^2 - \varphi(0)}. 
$$
Applying $\int_{\R^n}f\, dx=1$, one conclude that  
$$
\left( \frac{2\pi}{\Lambda} \right)^{\frac n2} e^{ - \varphi(0) }
\le
1
\le
\left( \frac{2\pi}{\lambda} \right)^{\frac n2} e^{ - \varphi(0)}, 
$$
which yields the desired assertion. 

(2) The desired assertion immediately follows by combining (1) with \eqref{e:LowerBoundVarPhi} and \eqref{e:UpperBoundPhi}. 

(3) 
Let us fix different points $x, y \in [-r, r]^n$, and put $\ell(t)=(1-t)x + ty \in \mathbb{R}^n$ for $t \in \mathbb{R}$. 
We may take  $t_1>1$ with $\ell(t_1) \in [-2r, 2r]^n \setminus  [-r,r]^n$ and take $t_2>t_1$ with $\frac1{|x-y|}=t_2 - t_1$. Since $\varphi \circ \ell$ is convex, it holds that 
\begin{align*}
\varphi(y) - \varphi(x)
&=
\varphi(\ell(1)) - \varphi(\ell(0))
\le
\frac{\varphi(\ell(t_2)) - \varphi(\ell(t_1))}{t_2 - t_1}
\\
&\le
\frac{\frac \Lambda2 |\ell(t_2)|^2 + \frac n2 \log \frac{2\pi}{\lambda} - \frac n2 \log \frac{2\pi}{\Lambda}}{t_2-t_1}
\\
&\le
\frac{\Lambda |\ell(t_2)- \ell(t_1)|^2 + \Lambda | \ell(t_1)|^2 + \frac n2 \log \frac{2\pi}{\lambda} - \frac n2 \log \frac{2\pi}{\Lambda}}{t_2-t_1}
\\
&=
\frac{\Lambda |t_2-t_1|^2 |y-x|^2 + \Lambda | \ell(t_1)|^2 + \frac n2 \log \frac{2\pi}{\lambda} - \frac n2 \log \frac{2\pi}{\Lambda}}{t_2-t_1}
\\
&\le
C_{n,r, \lambda, \Lambda}|y-x|, 
\end{align*}
where the second inequality follows from (2), and the last inequality follows from $\ell(t_1) \in [-2r, 2r]^n$ and $t_2-t_1 = \frac1{|x-y|}$. 
Similarly we obtain $\varphi(x) - \varphi(y) \le C_{n,r,\lambda,\Lambda} |y-x|$, and thus we conclude the desired assertion. 

(4) This assertion immediately follows from (2). 
\end{proof}


\begin{proof}[Proof of Theorem \ref{t:Extremiser}]
Let ${\bf f}^{(k)} = (f_1^{(k)}, f_2^{(k)}, \dots, f_m^{(k)}) \in \mathcal{F}_{\lambda, \Lambda}^{(e)}(\R^{n_1}) \times \cdots \times\mathcal{F}_{\lambda, \Lambda}^{(e)}(\R^{n_m})$ for $k \in \mathbb{N}$ be a minimizing sequence, namely 
\begin{equation}\label{e:Convergence}
\lim_{k \to \infty} {\rm BL}({\bf f}^{(k)}) = {\rm I}_{\lambda, \Lambda}^{(e)}({\bf n}, {\bf c}, \mathcal{Q}). 
\end{equation}
Without loss of generality, we may suppose that $\int_{\R^{n_i}} f_i^{(k)}\, dx_i=1$ for all $i$ and $k$. 

First note that Lemma \ref{l:Convexity} yields that 
$(f_i^{(k)})_{k \in \mathbb{N}}$ is uniformly bounded and uniformly equicontinuous
\footnote{
An uniform boundedness is a conclusion of Lemma \ref{l:Convexity}(4). 
To see that $(f_i^{(k)})_{k \in \mathbb{N}}$ is uniformly equicontinuous, 
let us take $x, y \in [-1, 1]^n$. Without loss of generality we may suppose that $\varphi_i^{(k)}(x) \le \varphi_i^{(k)}(y)$ for fixed $k$, where $f_i^{(k)}=e^{-\varphi_i^{(k)}}$. 
Then we may apply Lemma \ref{l:Convexity}(3) and (4) to see that 
$$
|f_i^{(k)}(x) - f_i^{(k)}(y)| 
=
e^{-\varphi_i^{(k)}(x)} ( 1 - e^{\varphi_i^{(k)}(x)-\varphi_i^{(k)}(y)})
\le
C_{n, \lambda, \Lambda} | \varphi_i^{(x)}-\varphi_i^{(k)}(y) |
\le 
C_{n, \lambda,\Lambda} |x-y|. 
$$
This means the uniform equicontinuity. 
}
 on $[-1, 1]^n$. 
Thus the Arzel\`{a}--Ascoli theorem and Lemma \ref{l:Convexity} imply that there exists some subsequence $(f_i^{(1_k)})_{k \in \mathbb{N}}$ which uniformly converges to some positive function $F_i^{(1)}$ defined on $[-1, 1]^{n_i}$. 
Also since $(f_i^{(1_k)})_{k \in \mathbb{N}}$ is uniformly bounded and uniformly equicontinuous on $[-2, 2]^{n_i}$, the Arzel\`{a}--Ascoli theorem and Lemma \ref{l:Convexity} imply that there exists some subsubsequence $(f_i^{(2_k)})_{k \in \mathbb{N}}$ which uniformly converges to some positive function $F_i^{(2)}$ defined on $[-2, 2]^{n_i}$. 
Clearly we see that $F_i^{(1)}=F_i^{(2)}$ on $[-1, 1]^{n_i}$. 
Iterating this procedure, we obtain a subsequence $(f_i^{(\ell_k)})_{k \in \mathbb{N}}$ and a positive function $F_i^{(\ell)}$ defined on $[-\ell, \ell]^{n_i}$ for each $\ell \in \mathbb{N}$ satisfying the followings: 
\begin{itemize}
\item[(i)] $(f_i^{((\ell+1)_k)})_{k \in \mathbb{N}}$ is a subsequence of $(f_i^{(\ell_k)})_{k \in \mathbb{N}}$. 

\item[(ii)] $(f_i^{(\ell_k)})_{k \in \mathbb{N}}$ uniformly converges to $F_i^{(\ell)}$ on $[-\ell, \ell]^{n_i}$ as $k \to \infty$.  

\item[(iii)] $F_i^{(\ell +1)} = F_i^{(\ell)}$ on $[-\ell, \ell]^{n_i}$. 
\end{itemize}
Now we define a function $F_i$ on $\R^n$ by 
$F_i(x) \coloneqq F_i^{(\ell)}(x)$ if $x \in [-\ell, \ell]^{n_i}$ for some $\ell \in \mathbb{N}$, which is well-defined. 
Moreover take a subsequence $(f_i^{(k_k)})_{k \in \mathbb{N}}$ of $(f_i^{k})_{k \in \mathbb{N}}$. 
From the construction, we may check that $F_i>0$ on $\mathbb{R}^{n_i}$ and 
$$
\lim_{k \to \infty} f_i^{(k_k)}(x) = F_i(x), \;\;\; x \in \R^{n_i}. 
$$
Since $f_i^{(k_k)}$ is $\lambda$-uniformly log-concave,  for any $x, y \in \R^{n_i}$ and $t \in (0,1)$, 
$$
- \log f_i^{(k_k)}( (1-t) x + ty) \le  - (1-t) \log f_i^{(k_k)}(x) - t \log f_i^{(k_k)}(y) - \frac \lambda2 t(1-t) |x-y|^2, 
$$
and thus tending $k \to \infty$, we obtain that 
$$
- \log F_i( (1-t) x + ty) \le  - (1-t) \log F_i(x) - t \log F_i(y) - \frac \lambda2 t(1-t) |x-y|^2
$$
for any $x, y \in \R^{n_i}$ and $t \in (0,1)$. This  means that $F_i$ is $\lambda$-uniformly log-concave. 
Similarly $F_i$ is also $\Lambda$-uniformly log-convex for $i=1, \dots, m$. 
By the same argument, we may also confirm that $F_i$ is even for all $i=1, \dots, m$. 
Moreover by Lemma \ref{l:Convexity}, Legesgue's convergence theorem yields that 
\begin{equation}\label{e:MassPreservation}
\int_{\R^{n_i}} F_i\, dx_i = \lim_{k \to \infty} \int_{\R^n} f_i^{(k_k)}\, dx_i = 1, \;\;\; i=1, \dots, m. 
\end{equation}
Thus $F_i \in \mathcal{F}_{\lambda, \Lambda}^{(e)}(\R^{n_i})$ for $i=1, \dots, m$. 

Finally, \eqref{e:Convergence}, Fatou's lemma and \eqref{e:MassPreservation} yield that 
\begin{align*}
{\rm I}_{\lambda, \Lambda}^{(e)}({\bf n}, {\bf c}, \mathcal{Q})
=&
\lim_{k \to \infty} \int_{\R^N} e^{\langle x, \mathcal{Q} x\rangle} \prod_{i=1}^m f_i^{(k_k)}(x_i)^{c_i}\, dx
\\
\ge&
\int_{\R^N} e^{\langle x, \mathcal{Q} x\rangle} \prod_{i=1}^m F_i(x_i)^{c_i}\, dx
\ge
{\rm I}_{\lambda, \Lambda}^{(e)}({\bf n}, {\bf c}, \mathcal{Q}), 
\end{align*}
which concludes that 
$$
{\rm I}_{\lambda, \Lambda}^{(e)}({\bf n}, {\bf c}, \mathcal{Q})
=
\int_{\R^N} e^{\langle x, \mathcal{Q} x\rangle} \prod_{i=1}^m F_i(x_i)^{c_i}\, dx. 
$$
\end{proof}

\subsection{Gaussian saturation for the symmetric inverse Brascamp--Lieb inequality under the regularization}\label{Section2.3} 

A main theorem in this subsection is that, in the regularized framework, the symmetric inverse Brascamp--Lieb constant is saturated by centered Gaussians. 

\begin{theorem}\label{t:RegGaussianSaturation}
Let $0 < \lambda \le \Lambda <+\infty$. Then 
$$
{\rm I}_{\lambda, \Lambda}^{(e)} ({\bf n}, {\bf c}, \mathcal{ Q})
=
{\rm I}_{\mathcal{G}_{\lambda, \Lambda}} ({\bf n}, {\bf c}, \mathcal{ Q}). 
$$
\end{theorem}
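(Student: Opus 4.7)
The inclusion $\mathcal{G}_{\lambda,\Lambda}(\R^{n_i}) \subset \mathcal{F}_{\lambda,\Lambda}^{(e)}(\R^{n_i})$ is automatic from the definitions, so ${\rm I}_{\lambda,\Lambda}^{(e)}({\bf n},{\bf c},\mathcal{Q}) \le {\rm I}_{\mathcal{G}_{\lambda,\Lambda}}({\bf n},{\bf c},\mathcal{Q})$ trivially. The whole content of the theorem is the reverse inequality. My plan is to apply Theorem \ref{t:Extremiser} to pick an actual minimizer ${\bf F} = (F_1,\dots,F_m)$ with $F_i \in \mathcal{F}_{\lambda,\Lambda}^{(e)}(\R^{n_i})$, and then demonstrate that this minimizer can be replaced, without increasing the value of ${\rm BL}$, by a tuple of centered Gaussians in $\mathcal{G}_{\lambda,\Lambda}$. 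Once accomplished, this yields ${\rm I}_{\lambda,\Lambda}^{(e)} = {\rm BL}({\bf F}) \ge {\rm I}_{\mathcal{G}_{\lambda,\Lambda}}$.

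The replacement mechanism I intend to implement is a Ball-type convolution/tensor-rotation monotonicity, in the spirit mentioned in the introduction. Concretely, I would consider the tensorised datum on $\R^{N} \oplus \R^{N}$ with input ${\bf F} \otimes {\bf F}$ and quadratic form $\mathcal{Q} \oplus \mathcal{Q}$, which gives ${\rm BL}({\bf F}\otimes{\bf F}) = {\rm BL}({\bf F})^2$. I would then apply, on each pair $\R^{n_i} \oplus \R^{n_i}$, the orthogonal rotation $R_i\colon (x_i,y_i) \mapsto ((x_i+y_i)/\sqrt{2},(x_i-y_i)/\sqrt{2})$. Because each $F_i$ is even, the rotated function $(F_i\otimes F_i)\circ R_i$ is symmetric in $y_i\leftrightarrow -y_i$, and integrating out the second factor produces
\[
\widetilde{F}_i(z_i) := \int_{\R^{n_i}} F_i\!\left(\tfrac{z_i+w_i}{\sqrt{2}}\right) F_i\!\left(\tfrac{z_i-w_i}{\sqrt{2}}\right) dw_i.
\]
The orthogonality of $R := \bigoplus_i R_i$ preserves evenness, $\lambda$-uniform log-concavity, and $\Lambda$-uniform log-convexity (the last two are preserved under this convolution by Pr\'ekopa--Leindler applied to $F_i$ and to $1/F_i$, using the regularised bounds). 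Hence $\widetilde{F}_i \in \mathcal{F}_{\lambda,\Lambda}^{(e)}(\R^{n_i})$ again.

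The crucial identity to establish is a Fubini-type calculation showing that, after the rotation $R$, the quadratic kernel $e^{\langle (x,y), (\mathcal{Q}\oplus\mathcal{Q}) (x,y)\rangle}$ decouples into a product of the $\widetilde{F}$-coordinate kernel and an auxiliary $y$-integral that is \emph{bounded above} by a product of Gaussian normalisations, giving the monotonicity
\[
{\rm BL}(\widetilde{\bf F}) \le {\rm BL}({\bf F}\otimes {\bf F})= {\rm BL}({\bf F})^2.
\]
Combined with extremality of ${\bf F}$, this forces $\widetilde{\bf F}$ to be extremal as well and equal to ${\bf F}$ up to normalisation. Iterating the step $F_i^{(k+1)} := \widetilde{F_i^{(k)}}$ produces, through the quantitative central limit theorem for log-concave densities (applicable here because the curvature bounds $\lambda\,\mathrm{id} \le -\nabla^2\log F_i^{(k)} \le \Lambda\,\mathrm{id}$ propagate along the iteration), a limit $g_{A_i^\star}\in \mathcal{G}_{\lambda,\Lambda}$. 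Continuity of ${\rm BL}$ under the uniform and integrable bounds supplied by Lemma \ref{l:Convexity} then gives ${\rm BL}({\bf F}) = {\rm BL}(g_{A_1^\star},\dots,g_{A_m^\star}) \ge {\rm I}_{\mathcal{G}_{\lambda,\Lambda}}$.

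The chief obstacle I anticipate is the Ball-type monotonicity step itself: one must verify that the quadratic form $\mathcal{Q}\oplus\mathcal{Q}$, after conjugation by $R$, interacts with the integrated-out variables in a way that yields exactly a Gaussian upper bound in those variables, so that the BL-value genuinely decreases to ${\rm BL}(\widetilde{\bf F})$. This is where the evenness of each $F_i$ must be used decisively (to symmetrise the cross-terms produced by the rotation) and where the specific structure that all maps $B_i$ are orthogonal projections $P_i$ becomes essential, since a non-orthogonal $B_i$ would not commute with the rotation $R$. Propagation of the $(\lambda,\Lambda)$-regularity through the iteration, and uniform tightness needed for the CLT limit, are secondary but manageable technicalities controlled by Lemma \ref{l:Convexity}.
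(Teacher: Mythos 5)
Your overall architecture is the same as the paper's: extract a minimizer via Theorem \ref{t:Extremiser}, perform a self-convolution step realized as a rotation of the tensorised datum, propagate $(\lambda,\Lambda)$-regularity via Lemma \ref{l:ConvUniLog}, and close with a central limit theorem for log-concave densities. But the key monotonicity step is misstated and, as written, would not close the argument.

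You claim a decoupling in which "an auxiliary $y$-integral is bounded \emph{above} by a product of Gaussian normalisations," leading to ${\rm BL}(\widetilde{\bf F}) \le {\rm BL}({\bf F})^2$. There are two problems here. First, an upper bound on the inner $y$-integral of the form $\int e^{\langle y, \mathcal{Q}y\rangle}\prod_i (F_i^{(x_i)}(y_i))^{c_i}\, dy \le C\prod_i\big(\int F_i^{(x_i)}\big)^{c_i}$ (a \emph{forward} Brascamp--Lieb estimate) gives ${\rm BL}({\bf F})^2 \le C\cdot{\rm BL}(\widetilde{\bf F})$, which points in the opposite direction from what you assert. Second, and more to the point, even granting your inequality ${\rm BL}(\widetilde{\bf F})\le {\rm BL}({\bf F})^2$, nothing "forces $\widetilde{\bf F}$ to be extremal": if ${\bf F}$ minimises, then ${\rm BL}({\bf F})={\rm I}_{\lambda,\Lambda}^{(e)}$, and your inequality only yields ${\rm BL}(\widetilde{\bf F})\le ({\rm I}_{\lambda,\Lambda}^{(e)})^2$, which carries no information unless ${\rm I}_{\lambda,\Lambda}^{(e)}\le 1$, and even then does not pin $\widetilde{\bf F}$ to the minimum. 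The iteration therefore stalls at the first step. What the paper actually proves (Proposition \ref{t:Monotone*}) is the Ball-type inequality
\[
{\rm BL}({\bf F})^2 \ \ge\ {\rm I}_{\lambda,\Lambda}^{(e)}({\bf n},{\bf c},\mathcal{Q})\cdot {\rm BL}(\widetilde{\bf F}),
\]
obtained by bounding the inner $y$-integral \emph{below} by the inverse constant itself: for each fixed $x$, the slice functions $y_i\mapsto F_i\big(\tfrac{x_i+y_i}{\sqrt2}\big)F_i\big(\tfrac{x_i-y_i}{\sqrt2}\big)$ lie in $\mathcal{F}_{\lambda,\Lambda}^{(e)}(\R^{n_i})$ (here evenness of $F_i$ is used), so the infimum defining ${\rm I}_{\lambda,\Lambda}^{(e)}$ applies directly. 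This self-referential lower bound is the crux. With it, extremality of ${\bf F}$ gives ${\rm I}_{\lambda,\Lambda}^{(e)}\ge {\rm BL}(\widetilde{\bf F})$, and since $\widetilde{\bf F}$ is again admissible, also ${\rm BL}(\widetilde{\bf F})\ge{\rm I}_{\lambda,\Lambda}^{(e)}$; hence $\widetilde{\bf F}$ is again extremal and the iteration (and CLT endgame) proceeds exactly as you intended. You should replace the "Gaussian normalisation" upper bound by this genuine Ball-type lower bound; everything else in your outline then goes through.
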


\if0 
Remark that the similar regularized inverse Brascamp--Lieb inequality under Barthe--Wolff's nondegenerate condition has been already investigated in \cite{BN}. In \cite{BN}, they introduced the class of $G$ type functions for symmetric positive definite matrix $G$ (this notion has been origin at \cite{BCCT}) using heat flow, and showed that the best constant of the inverse Brascamp--Lieb inequality under $G$ type functions is saturated by centered Gaussians belonging to the class of $G$ type functions. This phenomenon is closely related to Theorem \ref{t:RegGaussianSaturation}. 
Indeed, it is know in \cite[Lemma 3.3]{BN} that if a function $f$ is $G$ type, then $\nabla^2 \log f \ge -2\pi G$, which means that especially if $f$ is $G=\frac1{2\pi \Lambda} \, {\rm id}_{\mathbb{R}^n}$, then $f$ is $\Lambda$-uniformly log-convex. 
Hence the main result in \cite{BN} may be regard as the regularized inverse Brascamp--Lieb inequality under the only uniform log-convexity within Barthe--Wolff's nondegenerate condition. 
On the other hand, Theorem \ref{t:RegGaussianSaturation} states the regularized inverse Brascamp--Lieb inequality under both uniform log-concavity and uniform log-convexity without Barthe--Wolff's nondegenerate condition. 
In this sense, Theorem \ref{t:RegGaussianSaturation} is a counterpart of the work in \cite{BN}. 
\fi
A key idea to show Theorem \ref{t:RegGaussianSaturation}, and also through this paper, is the following monotonicity of the Brascamp--Lieb functional along the convolution. 

\begin{proposition}\label{t:Monotone*}
For any ${\bf f} \in \mathcal{F}_{\lambda, \Lambda}^{(e)}(\R^{n_1}) \times \cdots \times \mathcal{F}_{\lambda, \Lambda}^{(e)}(\R^{n_m})$, 
$$
{\rm BL}({\bf f})^2 
\ge
{\rm I}_{\lambda, \Lambda}^{(e)} ({\bf n}, {\bf c}, \mathcal{Q}) {\rm BL}( 2^{\frac{n_1}{2}}f_1 \ast f_1(\sqrt{2} \cdot), \dots, 2^{\frac{n_m}{2}}f_m \ast f_m(\sqrt{2} \cdot)). 
$$
\end{proposition}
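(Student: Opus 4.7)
My plan is to exploit the orthogonal change of variables on $\R^N \times \R^N$ given by $u = (x+y)/\sqrt{2}$ and $v=(x-y)/\sqrt{2}$, which is the standard device underlying Ball's inequality. The key observation is that this transformation preserves the quadratic form $\mathcal{Q}$, in the sense that $\langle x,\mathcal{Q}x\rangle + \langle y,\mathcal{Q}y\rangle = \langle u,\mathcal{Q}u\rangle + \langle v,\mathcal{Q}v\rangle$, and that it acts diagonally on each factor $\R^{n_i}$ of the orthogonal decomposition. This lets us ``tensor'' the integral defining ${\rm BL}(\mathbf{f})^2$ along the $v$-variable and apply the symmetric inverse Brascamp--Lieb inequality in the class $\mathcal{F}^{(e)}_{\lambda,\Lambda}$ fiberwise in $u$.

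Concretely, writing out ${\rm BL}(\mathbf{f})^2$ as a double integral over $(x,y) \in \R^N \times \R^N$ and performing the change of variables, the numerator becomes
\[
\int_{\R^N}e^{\langle u,\mathcal{Q}u\rangle} \int_{\R^N} e^{\langle v,\mathcal{Q}v\rangle}\prod_{i=1}^m h_i^{(u)}(v_i)^{c_i}\, dv\, du,
\]
where for each fixed $u \in \R^N$ we set $h_i^{(u)}(v_i) \coloneqq f_i\bigl((u_i+v_i)/\sqrt{2}\bigr)\, f_i\bigl((u_i-v_i)/\sqrt{2}\bigr)$. The next step is to verify that $h_i^{(u)} \in \mathcal{F}^{(e)}_{\lambda,\Lambda}(\R^{n_i})$ for every $u$. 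Evenness in $v_i$ is immediate from the evenness of $f_i$, which forces $h_i^{(u)}(-v_i) = h_i^{(u)}(v_i)$. Writing $f_i = e^{-\varphi_i}$, the Hessian of $-\log h_i^{(u)}$ in $v_i$ is the average $\tfrac12\nabla^2\varphi_i\bigl((u_i+v_i)/\sqrt{2}\bigr) + \tfrac12\nabla^2\varphi_i\bigl((u_i-v_i)/\sqrt{2}\bigr)$, which lies between $\lambda\,{\rm id}$ and $\Lambda\,{\rm id}$ because $\varphi_i$ does (this uses the chain rule and the fact that the $\sqrt{2}$ normalization exactly cancels the two equal contributions). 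Integrability follows from the Gaussian pointwise bounds in Lemma \ref{l:Convexity}(2).

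Applying the definition of ${\rm I}^{(e)}_{\lambda,\Lambda}(\mathbf{n},\mathbf{c},\mathcal{Q})$ to the tuple $(h_1^{(u)},\ldots, h_m^{(u)})$ for each fixed $u$ gives
\[
\int_{\R^N} e^{\langle v,\mathcal{Q}v\rangle}\prod_{i=1}^m h_i^{(u)}(v_i)^{c_i}\, dv \ge {\rm I}^{(e)}_{\lambda,\Lambda}(\mathbf{n},\mathbf{c},\mathcal{Q}) \prod_{i=1}^m \Bigl(\int_{\R^{n_i}} h_i^{(u)}(v_i)\, dv_i\Bigr)^{c_i}.
\]
A direct substitution $w_i = (u_i+v_i)/\sqrt{2}$, together with the evenness of $f_i$, identifies $\int_{\R^{n_i}} h_i^{(u)}(v_i)\, dv_i = 2^{n_i/2}(f_i \ast f_i)(\sqrt{2}\,u_i) = \widetilde{f}_i(u_i)$, where $\widetilde{f}_i$ is the $i$-th input appearing on the right-hand side of the claimed monotonicity. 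Integrating this lower bound against $e^{\langle u,\mathcal{Q}u\rangle}$ in $u$ and dividing both sides by $\prod_i \bigl(\int f_i\bigr)^{2c_i} = \prod_i \bigl(\int \widetilde{f}_i\bigr)^{c_i}$ yields the stated inequality.

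I expect the algebraic manipulations (the change of variables and the convolution identification) to be entirely routine. The only step requiring genuine care is the verification that $h_i^{(u)}$ inherits the two-sided regularization $(\lambda,\Lambda)$ from $f_i$, since this is what makes the symmetric inverse Brascamp--Lieb lower bound applicable on the fibres. This in turn is the precise reason for the $\sqrt{2}$ rescaling in the definition of $\widetilde{f}_i$: without the $\sqrt{2}$ factor, the Hessian computation would yield the wrong constants and the fibre input would fall outside $\mathcal{F}^{(e)}_{\lambda,\Lambda}$.
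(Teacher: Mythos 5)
Your proposal is correct and follows essentially the same route as the paper: rotate the squared integral by $(x,y)\mapsto((x+y)/\sqrt{2},(x-y)/\sqrt{2})$, observe that the fibre functions $h_i^{(u)}$ lie in $\mathcal{F}^{(e)}_{\lambda,\Lambda}$, apply the definition of ${\rm I}^{(e)}_{\lambda,\Lambda}$ fiberwise, and identify the resulting mass with the rescaled self-convolution (the paper writes the same computation as $\int F\ast F$ with two substitutions, which is the same algebra). One small point of care: the paper verifies the $\lambda$-uniform log-concavity and $\Lambda$-uniform log-convexity of $h_i^{(u)}$ directly from the modulus definition rather than via Hessians, which avoids having to justify twice differentiability of $\varphi_i$; also, the evenness of $h_i^{(u)}$ follows simply from swapping the two factors and does not actually require evenness of $f_i$, though $f_i$ is of course even here.
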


Before proving this assertion, it is appropriate to mention about Ball's inequality that plays a crucial role in the study of the forward Brascamp--Lieb inequality; see \cite{Barthe1, BCCT}. 
Especially in \cite[Lemma 6.1]{BCCT}, for given measurable ${\bf f}$, ${\bf f'}$ and zero matrix $\mathcal{Q}=O$, it was shown that\footnote{
Precisely, Ball's inequality is established in \cite[Lemma 6.1]{BCCT} associated with linear maps $(B_j)_{j=1}^m$. 
} 
\begin{equation}\label{e:BallIneq}
{\rm BL}({\bf n}, {\bf c}, O; {\bf f}) {\rm BL}({\bf n}, {\bf c}, O; {\bf f'})
\le
{\rm BL}({\bf n}, {\bf c}, O; {\bf f}*{\bf f}')
\sup_{{\bf h}} {\rm BL}({\bf n}, {\bf c}, O; {\bf h}). 
\end{equation}
In particular, by choosing ${\bf f'}$ as any centered Gaussian, this observation provides the evidence that one may expect to prove the forward Brascamp--Lieb inequality by using heat flow. 
Actually, based on this observation, Bennett et al. \cite{BCCT} provides the heat flow proof of the forward Brascamp--Lieb inequality; see also the work of Carlen--Lieb--Loss \cite{CLL}. 
What is an important difference between Theorem \ref{t:Monotone*} and \eqref{e:BallIneq} is whether ${\bf f}={\bf f'}$ or not. 
That we consider the self-convolution $\mathbf{f}\ast \mathbf{f}$ is crucial in the study of the symmetric inverse Brascamp--Lieb inequality as we will see below. 
Moreover, it seems to be less feasible to us to expect the similar monotonicity for the symmetric inverse Brascamp--Lieb constant under $\mathbf{f}\ast \mathbf{f}'$. 
This point is a main difficulty to run the heat flow argument to study the symmetric inverse Brascamp--Lieb inequality. 
Interestingly, the similar difficulty has been observed by Courtade--Fathi--Mikulincer \cite{CFM} in the stochastic proof of the symmetric Talagrand inequality. 
The origin of the idea to consider $ {\rm BL}(\mathbf{f})^2$ may be found in Lieb's original proof in \cite{Lieb}, where he amplified his observation that if $f$ is the one dimensional centered Gaussian, $f(x)f(y) = f( \frac{x+y}{\sqrt{2}} )f( \frac{x-y}{\sqrt{2}} )$. 
In fact, we will also meet such specific structure in the following proof. 
We refer to \cite{LCCV1} for the recent application of this principle in the context of the forward--reverse Brascamp--Lieb inequality by Liu--Courtade--Cuff--Verd\'{u}.

\begin{proof}
We may suppose that $\int_{\R^{n_i}} f_i \, dx_i=1$. 
Put 
$$
F(x) \coloneqq e^{\langle x, \mathcal{Q} x\rangle} \prod_{i=1}^m f_i(x_i)^{c_i}, \;\;\; x=(x_1, \dots, x_m) \in \bigoplus_{i=1}^m \R^{n_i} = \R^N. 
$$
Then 
$$
\int_{\R^N} F \ast F\, dx = {\rm BL}({\bf f})^2. 
$$
On the other hand, 
\begin{align*}
&\int_{\R^N} F \ast F\, dx
\\
=&
2^{\frac{N}{2}} \int_{\R^N} \int_{\R^N} e^{\langle y, \mathcal{Q} y\rangle} \prod_{i=1}^m f_i(y_i)^{c_i} e^{\langle \sqrt{2}x-y, \mathcal{Q} (\sqrt{2}x-y)\rangle} \prod_{i=1}^m f_i(\sqrt{2} x_i - y_i)^{c_i} \, dy\, dx
\\
=&
\int_{\R^N} \int_{\R^N} e^{\langle \frac{x+y}{\sqrt{2}}, \mathcal{Q} \frac{x+y}{\sqrt{2}} \rangle} \prod_{i=1}^m f_i( \frac{x_i + y_i}{\sqrt{2}} )^{c_i} e^{\langle \frac{x-y}{\sqrt{2}}, \mathcal{Q} \frac{x-y}{\sqrt{2}} \rangle} \prod_{i=1}^m f_i( \frac{x_i - y_i}{\sqrt{2}} )^{c_i} \, dy\, dx
\\
=&
\int_{\R^N} e^{\langle x, \mathcal{Q}x \rangle } \int_{\R^N} e^{\langle y, \mathcal{Q}y \rangle } 
\prod_{i=1}^m \left(f_i( \frac{x_i + y_i}{\sqrt{2}} ) f_i( \frac{x_i - y_i}{\sqrt{2}} ) \right)^{c_i} \, dy\, dx, 
\end{align*}
where the first equality follows from changing variables $x$ as $\sqrt{2}x$, and the second equality follows from changing variables $y$ as $\frac{x+ y}{\sqrt{2}}$. 
For fixed $x_i \in \mathbb{R}^{n_i}$, put  
$$
F_i(y_i) = F_i^{(x_i)}(y_i) \coloneqq f_i( \frac{x_i + y_i}{\sqrt{2}} ) f_i( \frac{x_i - y_i}{\sqrt{2}} ), \;\;\; y_i \in \R^{n_i}. 
$$
Then we claim that  $F_i \in \mathcal{F}_{\lambda, \Lambda}^{(e)}(\R^{n_i})$ for $i=1, \dots, m$. 

To see this, first note that $F_i$ is even and $F_i>0$ by definition. 
Next, we put $f_i=e^{-\varphi_i}$. 
Since $f_i$ is $\lambda$-uniformly log-concave, it holds that for any $y_1, y_2 \in \R^{n_i}$ and $t \in (0,1)$, 
\begin{align*}
&\varphi_i \left( \frac{x + (1-t)y_1 + t y_2}{\sqrt{2}} \right)
\\
=&
\varphi_i \left( (1-t) \frac{x + y_1}{\sqrt{2}} + t \frac{x + y_2}{\sqrt{2}} \right)
\\
\le&
(1-t) \varphi_i( \frac{x + y_1}{\sqrt{2}})
+
t \varphi_i( \frac{x + y_2}{\sqrt{2}})
- 
\frac{\lambda}{2} t(1-t) \left| \frac{x + y_2}{\sqrt{2}} - \frac{x + y_1}{\sqrt{2}} \right|^2 
\\
=&
(1-t) \varphi_i( \frac{x + y_1}{\sqrt{2}})
+
t \varphi_i( \frac{x + y_2}{\sqrt{2}})
- 
\frac{\lambda}{4} t(1-t) | y_2-y_1|^2. 
\end{align*}
Similarly, we have that 
$$
\varphi_i \left( \frac{x -( (1-t)y_1 + t y_2)}{\sqrt{2}} \right)
\le
(1-t) \varphi_i( \frac{x - y_1}{\sqrt{2}})
+
t \varphi_i( \frac{x - y_2}{\sqrt{2}})
- 
\frac{\lambda}{4} t(1-t) | y_2-y_1|^2. 
$$
Summing up, we obtain that 
\begin{align*}
&\varphi_i \left( \frac{x + (1-t)y_1 + t y_2}{\sqrt{2}} \right)
+
\varphi_i \left( \frac{x -( (1-t)y_1 + t y_2)}{\sqrt{2}} \right)
\\
\le&
(1-t) \left( \varphi_i( \frac{x + y_1}{\sqrt{2}}) + \varphi_i( \frac{x - y_1}{\sqrt{2}}) \right)
+
t \left( \varphi_i( \frac{x + y_2}{\sqrt{2}}) + \varphi_i( \frac{x - y_2}{\sqrt{2}}) \right)
\\
&
- 
\frac{\lambda}{2} t(1-t) | y_2-y_1|^2, 
\end{align*}
which means that $F_i$ is $\lambda$-uniformly log-concave. 
The similar argument also proves that $F_i$ is $\Lambda$-uniformly log-convex. 
Moreover by $\lambda$-uniformly log-concavity of $F_i$, $F_i \in L^1(\R^{n_i})$ holds true. 
Thus $F_i \in \mathcal{F}_{\lambda, \Lambda}^{(e)}(\R^{n_i})$ for $i=1, \dots, m$. 

By definition, it holds that 
\begin{align*}
&
\int_{\R^N} e^{\langle x, \mathcal{Q}x \rangle } \int_{\R^N} e^{\langle y, \mathcal{Q}y \rangle } 
\prod_{i=1}^m \left(f_i( \frac{x_i + y_i}{\sqrt{2}} ) f_i( \frac{x_i - y_i}{\sqrt{2}} ) \right)^{c_i} \, dy\, dx, 
\\
&\ge
{\rm I}_{\lambda, \Lambda}^{(e)} ({\bf n}, {\bf c}, \mathcal{Q})
\int_{\R^N} e^{\langle x, \mathcal{Q}x \rangle } \prod_{i=1}^m \left( \int_{\R^{n_i}} f_i( \frac{x_i + y_i}{\sqrt{2}} ) f_i( \frac{x_i - y_i}{\sqrt{2}} ) \, dy_i \right)^{c_i}\, dx
\\
&=
{\rm I}_{\lambda, \Lambda}^{(e)} ({\bf n}, {\bf c}, \mathcal{Q})
\int_{\R^N} e^{\langle x, \mathcal{Q}x \rangle } \prod_{i=1}^m \left( 2^{\frac {n_i}{2}} f_i \ast f_i(\sqrt{2} x_i) \right)^{c_i}\, dx
\\
&=
{\rm I}_{\lambda, \Lambda}^{(e)} ({\bf n}, {\bf c}, \mathcal{Q})
{\rm BL}( 2^{\frac{n_1}{2}}f_1 \ast f_1(\sqrt{2} \cdot), \dots, 2^{\frac{n_m}{2}}f_m \ast f_m(\sqrt{2} \cdot)), 
\end{align*}
where the last identity follows from 
$$
\int_{\R^{n_i}} 2^{\frac {n_i}{2}} f_i \ast f_i(\sqrt{2} x_i)\, dx_i
=1, \;\;\; i=1, \dots, m. 
$$
\end{proof}

\begin{proof}[Proof of Theorem \ref{t:RegGaussianSaturation}]
By Theorem \ref{t:Extremiser}, there exists some ${\bf f} \in  \mathcal{F}_{\lambda, \Lambda}^{(e)}(\R^{n_1}) \times \cdots \times\mathcal{F}_{\lambda, \Lambda}^{(e)}(\R^{n_m})$ such that 
$$
{\rm I}_{\lambda, \Lambda}^{(e)} ({\bf c})
=
{\rm BL} ({\bf n}, {\bf c}, \mathcal{Q} ; {\bf f}). 
$$
Without loss of generality, we may suppose that $\int_{\R^{n_i}} f_i \, dx_i=1$. 
Then one may apply Proposition \ref{t:Monotone*} to see that 
$$
{\rm BL}({\bf f})^2 
\ge
{\rm I}_{\lambda, \Lambda}^{(e)} ({\bf n}, {\bf c}, \mathcal{Q}) {\rm BL}( 2^{\frac{n_1}{2}}f_1 \ast f_1(\sqrt{2} \cdot), \dots, 2^{\frac{n_m}{2}}f_m \ast f_m(\sqrt{2} \cdot)). 
$$
As is observed by Brascamp--Lieb \cite{BraLi_JFA}, the uniform log-concavity as well as log-convexity are preserved under the suitably scaled convolution. That is, if $f_i \in\mathcal{F}^{(e)}_{\lambda,\Lambda}(\mathbb{R}^{n_i})$ then  $2^{\frac{n_i}{2}}f_i \ast f_i(\sqrt{2} \cdot) \in \mathcal{F}_{\lambda, \Lambda}^{(e)}(\R^{n_i})$ also holds true; we give a short proof of this fact in Appendix Lemma \ref{l:ConvUniLog}. Thus we may again apply Proposition \ref{t:Monotone*} for $2^{\frac{n_i}{2}}f_i \ast f_i(\sqrt{2} \cdot)$. 
Iterating this procedure, we obtain that 
\begin{align*}
&{\rm I}_{\lambda, \Lambda}^{(e)} ({\bf n}, {\bf c}, \mathcal{Q})^{2^k}
=
{\rm BL} ({\bf f})^{2^k}
\\
&\ge
{\rm I}_{\lambda, \Lambda}^{(e)} ({\bf n}, {\bf c}, \mathcal{Q})^{2^k-1} 
{\rm BL}( (2^k)^{\frac{n_1}{2}} f_1^{(2^k)}({2^{\frac k2}} \cdot), \dots, (2^k)^{\frac{n_m}{2}}f_m^{(2^k)}({2^{\frac k2}} \cdot)), 
\end{align*}
where 
$$
f_i^{(2^k)}
\coloneqq
\overbrace{f_i \ast \cdots \ast f_i}^{\text{$2^k$-times}}, \;\;\; i=1, \dots, m.
$$
Since ${\rm I}_{\lambda, \Lambda}^{(e)} ({\bf n}, {\bf c}, \mathcal{Q})>0$ by Lemma \ref{l:Convexity}, we see that 
$$
{\rm I}_{\lambda, \Lambda}^{(e)} ({\bf n}, {\bf c}, \mathcal{Q})
\ge
{\rm BL}( (2^k)^{\frac{n_1}{2}} f_1^{(2^k)}(2^{\frac k2} \cdot), \dots, (2^k)^{\frac{n_m}{2}}f_m^{(2^k)}(2^{\frac k2} \cdot)). 
$$
On the other hand, the central limit theorem \footnote{For instance, see \cite[Theorem 1.1]{Bob}} yields that there exist some centered Gaussians $g_i$ for each $i=1, \dots, m$ such that 
$(2^k)^{\frac{n_i}{2}} f_i^{(2^k)}(2^{\frac k2} \cdot)$ converges to $g_i$ as $k \to \infty$ in $L^1$ topology, and thus especially pointwisely a.e. on $\R^{n_i}$. 
Since $(2^k)^{\frac{n_i}{2}} f_i^{(2^k)}(2^{\frac k2} \cdot) \in \mathcal{F}_{\lambda, \Lambda}^{(e)}(\R^{n_i})$ and $(2^k)^{\frac{n_i}{2}} \int_{\mathbb{R}^{n_i}} f_i^{(2^k)}(2^{\frac k2} x_i)\, dx_i=1$ for all $k \in \mathbb{N}$, one may also check that $g_i \in \mathcal{G}_{\lambda, \Lambda}(\R^{n_i})$ and $\int_{\mathbb{R}^{n_i}} g_i\, dx_i =1$. 
Hence Fatou's lemma yields that 
\begin{align*}
&\liminf_{k \to \infty} {\rm BL}( (2^k)^{\frac{n_1}{2}} f_1^{(2^k)}(\sqrt{2} \cdot), \dots, (2^k)^{\frac{n_m}{2}}f_m^{(2^k)}(\sqrt{2} \cdot))
\\
&\ge
{\rm BL}( g_1, \dots, g_m)
\ge
{\rm I}_{\mathcal{G}_{\lambda, \Lambda}}({\bf n}, {\bf c}, \mathcal{Q}). 
\end{align*}
It follows from this that 
$$
{\rm I}_{\lambda, \Lambda}^{(e)} ({\bf n}, {\bf c}, \mathcal{Q})
\ge
{\rm I}_{\mathcal{G}_{\lambda, \Lambda}}({\bf n}, {\bf c}, \mathcal{Q}). 
$$

Finally since the opposite inequality 
$$
{\rm I}_{\lambda, \Lambda}^{(e)} ({\bf n}, {\bf c}, \mathcal{Q})
\le
{\rm I}_{\mathcal{G}_{\lambda, \Lambda}}({\bf n}, {\bf c}, \mathcal{Q}) 
$$
is obvious by definition, we conclude the desired assertion. 
\end{proof}

\subsection{Proof of Theorem \ref{t:MainIBL}}\label{Section2.4} 

Let us complete the proof of Theorem \ref{t:MainIBL}. 

\begin{theorem}\label{t:GaussianSaturation}
For any $\lambda\ge0$, 
$$
 {\rm I}_{\lambda, \infty}^{(e)} ({\bf n}, {\bf c}, \mathcal{Q})
=
{\rm I}_{\mathcal{G}_{\lambda, \infty}} ({\bf n}, {\bf c}, \mathcal{Q}). 
$$
Especially, when $\lambda=0$, it holds that 
$$
 {\rm I}_{LC}^{(e)} ({\bf n}, {\bf c}, \mathcal{Q})
=
{\rm I}_{\bf g} ({\bf n}, {\bf c}, \mathcal{Q}). 
$$
\end{theorem}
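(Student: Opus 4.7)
The plan is to deduce Theorem~\ref{t:GaussianSaturation} from the regularized statement Theorem~\ref{t:RegGaussianSaturation} via a limiting argument: first send $\Lambda\to\infty$ to remove the upper regularization, and then (for the $\lambda=0$ case) send $\lambda\to 0$ to remove the lower one. The inequality ${\rm I}^{(e)}_{\lambda,\infty}(\mathbf{n},\mathbf{c},\mathcal{Q}) \le {\rm I}_{\mathcal{G}_{\lambda,\infty}}(\mathbf{n},\mathbf{c},\mathcal{Q})$ is immediate since any centered Gaussian $g_A$ with $A\ge\lambda\,{\rm id}$ lies in $\mathcal{F}^{(e)}_{\lambda,\infty}$; the content is the reverse direction.

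Fix $\lambda>0$ and $\mathbf{f}=(e^{-\varphi_i})_{i=1}^m$ with $f_i \in \mathcal{F}^{(e)}_{\lambda,\infty}(\mathbb{R}^{n_i})$. Write $\varphi_i = \tfrac{\lambda}{2}|\cdot|^2 + \tilde\varphi_i$ with $\tilde\varphi_i$ even and convex, and introduce the Moreau--Yosida envelope
$$
\tilde\varphi_i^{(\Lambda)}(x) := \inf_{y \in \mathbb{R}^{n_i}}\Bigl[\tilde\varphi_i(y) + \tfrac{\Lambda}{2}|x-y|^2\Bigr], \qquad f_i^{(\Lambda)} := e^{-\tfrac{\lambda}{2}|\cdot|^2 - \tilde\varphi_i^{(\Lambda)}}.
$$
Standard properties of the Moreau envelope ensure that $\tilde\varphi_i^{(\Lambda)}$ is even, convex, with $\Lambda$-Lipschitz gradient, and $\tilde\varphi_i^{(\Lambda)}\nearrow \tilde\varphi_i$ pointwise; hence $f_i^{(\Lambda)} \in \mathcal{F}^{(e)}_{\lambda,\lambda+\Lambda}(\mathbb{R}^{n_i})$ with $f_i^{(\Lambda)}\searrow f_i$. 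Applying Theorem~\ref{t:RegGaussianSaturation} at regularization level $(\lambda,\lambda+\Lambda)$ and using $\mathcal{G}_{\lambda,\lambda+\Lambda}\subset \mathcal{G}_{\lambda,\infty}$ gives ${\rm BL}(\mathbf{f}^{(\Lambda)}) \ge {\rm I}_{\mathcal{G}_{\lambda,\lambda+\Lambda}}(\mathbf{n},\mathbf{c},\mathcal{Q}) \ge {\rm I}_{\mathcal{G}_{\lambda,\infty}}(\mathbf{n},\mathbf{c},\mathcal{Q})$. Sending $\Lambda\to\infty$, monotone convergence handles the denominator ($\int f_i^{(\Lambda)}\,dx_i \searrow \int f_i\,dx_i \in (0,\infty)$), and the pointwise-decreasing numerator integrand converges to that of $\mathbf{f}$---either ${\rm BL}(\mathbf{f})=+\infty$ and the claim is trivial, or one invokes dominated convergence using $\mathbf{f}^{(\Lambda_0)}$ as a dominating configuration for some fixed large $\Lambda_0$---yielding ${\rm BL}(\mathbf{f}) \ge {\rm I}_{\mathcal{G}_{\lambda,\infty}}(\mathbf{n},\mathbf{c},\mathcal{Q})$.

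For $\lambda=0$, an extra approximation step accommodates possibly vanishing $f_i \in \mathcal{F}^{(e)}_{LC}(\mathbb{R}^{n_i})$: set $f_i^{(\lambda',\epsilon)} := (f_i * G_\epsilon) \cdot g_{\lambda'\,{\rm id}}$, where $G_\epsilon$ is a normalized centered even Gaussian of small variance. By Pr\'ekopa--Leindler $f_i * G_\epsilon$ is strictly positive, even and log-concave, so multiplication by $g_{\lambda'\,{\rm id}}$ renders $f_i^{(\lambda',\epsilon)}$ strictly positive and $\lambda'$-uniformly log-concave, placing it in $\mathcal{F}^{(e)}_{\lambda',\infty}(\mathbb{R}^{n_i})$. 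Applying the already-established $\lambda=\lambda'>0$ case gives ${\rm BL}(\mathbf{f}^{(\lambda',\epsilon)}) \ge {\rm I}_{\mathcal{G}_{\lambda',\infty}}(\mathbf{n},\mathbf{c},\mathcal{Q})$; letting $\epsilon \to 0$ and then $\lambda'\to 0$, and using ${\rm I}_{\mathcal{G}_{\lambda',\infty}}\searrow {\rm I}_\mathbf{g}$ as $\mathcal{G}_{\lambda',\infty}$ increases to the full class of positive definite matrices, completes the proof. The chief technical obstacle throughout is propagating finiteness of the Brascamp--Lieb numerator through the approximation: because $\mathcal{Q}$ may have positive eigenvalues the integrand $e^{\langle x,\mathcal{Q}x\rangle}\prod_i f_i^{(\Lambda)}(x_i)^{c_i}$ is not automatically integrable, and one must verify that the Moreau envelope (respectively the convolution with a narrow Gaussian) preserves the growth rate of $-\log f_i$ at infinity well enough for the monotone/dominated convergence to go through.
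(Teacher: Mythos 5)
Your overall plan (approximate from the regularized class, apply Theorem~\ref{t:RegGaussianSaturation}, pass to the limit) matches the paper's strategy, and the easy inequality ${\rm I}^{(e)}_{\lambda,\infty}\le{\rm I}_{\mathcal{G}_{\lambda,\infty}}$ is fine. The genuine gap is exactly the one you flag at the end, and the dichotomy you offer does not close it. When you write ``either ${\rm BL}(\mathbf{f})=+\infty$ and the claim is trivial, or one invokes dominated convergence using $\mathbf{f}^{(\Lambda_0)}$ as a dominating configuration,'' you are implicitly assuming that ${\rm BL}(\mathbf{f})<\infty$ forces ${\rm BL}(\mathbf{f}^{(\Lambda_0)})<\infty$ for some $\Lambda_0$; this implication is false. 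The Moreau envelope satisfies $\tilde\varphi_i^{(\Lambda)}\le\tilde\varphi_i$ and its gradient is $\Lambda$-Lipschitz, so $\tilde\varphi_i^{(\Lambda)}$ grows at most quadratically at infinity no matter how fast $\tilde\varphi_i$ grows. Consequently $f_i^{(\Lambda)}$ decays only like $e^{-\lambda|x|^2/2}$ (times a constant), even if $f_i$ decays much faster, and the bound $e^{\langle x,\mathcal{Q}x\rangle}\prod_i f_i^{(\Lambda)}(x_i)^{c_i}\lesssim e^{\langle x,\mathcal{Q}x\rangle}e^{-\frac{\lambda}{2}\sum_i c_i|x_i|^2}$ is integrable only when $\lambda\sum_ic_iP_i^*P_i-2\mathcal{Q}>0$. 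Since $\mathcal{Q}$ is arbitrary and $\lambda>0$ is fixed (and in the application to Theorem~\ref{t:MainGaussianSaturation} one even takes $\mathcal{Q}=\mathcal{Q}_p$ with $p\to 0$), there is no reason for this to hold, and then the numerator of ${\rm BL}(\mathbf{f}^{(\Lambda)})$ can be $+\infty$ for every $\Lambda$ while the numerator for $\mathbf{f}$ is finite. In that regime you have no dominating function and the passage $\Lambda\to\infty$ fails.

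The paper's proof is organized specifically to defuse this. Before introducing the $\Lambda$-regularization it first reduces, via Lemma~\ref{l:Reduction1}, to inputs in the class $\mathcal{N}_{\varepsilon_0,\lambda_0}$ which enjoy the quartic decay $f_i(x)\le C_{f_i}e^{-\varepsilon_0|x|^4}$. It then uses heat-flow (rather than Moreau) regularization $(f_i)_{\lambda,\Lambda}=e^{\frac{1}{2\Lambda}\Delta}\big(f_i\,e^{-\frac{\lambda}{2}|\cdot|^2}\big)$ together with the crucial pointwise estimate \eqref{e:PWBoundHeat},
\[
(f_i)_{\lambda,\Lambda}(x_i)\le C_{f_i,n_i}\big(e^{-c\varepsilon_0|x_i|^4}+e^{-c\Lambda|x_i|^2}\big),
\]
which shows the quartic decay survives the regularization. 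Because the quartic term beats any quadratic exponential $e^{\langle x,\mathcal{Q}x\rangle}$ regardless of the eigenvalues of $\mathcal{Q}$, the paper obtains a $\Lambda$-independent dominating function (with a fixed cut-off $\Lambda_0$), so dominated convergence as $\Lambda\to\infty$ is legitimate. Your Moreau-envelope substitute cannot reproduce this: it is a max-type infimal convolution and collapses all super-quadratic growth of the potential down to quadratic, so it is strictly worse at preserving tail decay than the heat flow. To repair your argument you would either have to re-introduce a preliminary reduction to a class with uniform super-exponential decay (as in Lemma~\ref{l:Reduction1}) and verify that your chosen smoothing preserves it, or switch to the heat-flow smoothing where such a preservation can actually be proved.

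A secondary remark: the same integrability concern recurs in your $\lambda=0$ step, where $f_i^{(\lambda',\epsilon)}=(f_i*G_\epsilon)g_{\lambda'\,\mathrm{id}}$ has only $\lambda'$-uniform log-concavity and hence only $e^{-\lambda'|x|^2/2}$ decay with $\lambda'$ small; the paper instead proves Lemma~\ref{l:ApproxRegIBL} directly for all $\lambda_0\ge 0$, so the $\lambda=0$ case needs no separate convolution device.
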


Obviously Theorem \ref{t:MainIBL} follows from the latter assertion in Theorem \ref{t:GaussianSaturation}.

To show Theorem \ref{t:GaussianSaturation}, let us introduce the intermediate nice class $\mathcal{N}_{\varepsilon_0, \lambda}$ for each $\varepsilon_0>0$ and $\lambda_0\ge0$ defined by 
$$
\mathcal{N}_{\varepsilon_0, \lambda_0}(\mathbb{R}^n)
:= 
\big\{
f \in \mathcal{F}_{\lambda_0, \infty}^{(e)}(\mathbb{R}^n) \; :  \; \exists C_{f}>0\; \text{s.t.}\; f(x)\le C_{f} e^{ -\varepsilon_0 |x|^4 },\;  |x|\ge1 
\big\}. 
$$

\begin{lemma}\label{l:Reduction1}
Let $\lambda_0\ge0$. Then 
	\begin{equation}\label{e:Reduction1}
		{\rm I}_{\lambda_0, \infty}^{(e)} ({\bf n}, {\bf c}, \mathcal{Q})
		= 
		\lim_{\varepsilon_0\to0} 
		\inf_{ f_i \in \mathcal{N}_{\varepsilon_0, \lambda_0}(\R^{n_i}) }
		{\rm BL}(\mathbf{f}).
	\end{equation}
\end{lemma}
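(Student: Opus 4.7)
The plan has two halves. Since $\mathcal{N}_{\varepsilon_0, \lambda_0}(\mathbb{R}^n) \subset \mathcal{F}^{(e)}_{\lambda_0, \infty}(\mathbb{R}^n)$, the infimum on the right-hand side of \eqref{e:Reduction1} is bounded below by ${\rm I}^{(e)}_{\lambda_0, \infty}(\mathbf{n}, \mathbf{c}, \mathcal{Q})$ for every $\varepsilon_0 > 0$, and is monotonically nonincreasing as $\varepsilon_0 \downarrow 0$ because the class $\mathcal{N}_{\varepsilon_0, \lambda_0}$ expands. Hence the limit in \eqref{e:Reduction1} exists in $[0, \infty]$ and is at least ${\rm I}^{(e)}_{\lambda_0, \infty}(\mathbf{n}, \mathbf{c}, \mathcal{Q})$. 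It thus suffices to produce a matching upper bound.

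For the matching bound, given any $\mathbf{f} = (f_1, \ldots, f_m) \in \prod_i \mathcal{F}^{(e)}_{\lambda_0, \infty}(\mathbb{R}^{n_i})$, I would multiplicatively regularize by setting $f_i^{(\varepsilon_0)}(x_i) := f_i(x_i)\, e^{-\varepsilon_0 |x_i|^4}$ for each $\varepsilon_0 > 0$. Three things must be checked to place $f_i^{(\varepsilon_0)}$ in $\mathcal{N}_{\varepsilon_0, \lambda_0}(\mathbb{R}^{n_i})$: (i) $\lambda_0$-uniform log-concavity, which is immediate because $-\log f_i^{(\varepsilon_0)} - \tfrac{\lambda_0}{2}|x|^2 = (-\log f_i - \tfrac{\lambda_0}{2}|x|^2) + \varepsilon_0|x|^4$ is a sum of two convex functions; (ii) the pointwise decay $f_i^{(\varepsilon_0)}(x) \le f_i(0)\, e^{-\varepsilon_0|x|^4}$, which follows from the fact that an even log-concave function attains its maximum at the origin, with $f_i(0) < \infty$ since $f_i$ is real-valued; and (iii) evenness and $L^1$-integrability, both inherited trivially from $f_i$. (In particular the quartic decay comes for free on all of $\mathbb{R}^{n_i}$, not only on $\{|x| \ge 1\}$.)

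The final step is to show ${\rm BL}(\mathbf{f}^{(\varepsilon_0)}) \to {\rm BL}(\mathbf{f})$ as $\varepsilon_0 \downarrow 0$. Since $e^{-\varepsilon_0 c_i|x_i|^4} \uparrow 1$ pointwise, both the numerator $\int_{\mathbb{R}^N} e^{\langle x, \mathcal{Q}x\rangle} \prod_i f_i(x_i)^{c_i}\, e^{-\varepsilon_0 c_i|x_i|^4}\, dx$ and each denominator factor $\int_{\mathbb{R}^{n_i}} f_i(x_i)\, e^{-\varepsilon_0|x_i|^4}\, dx_i$ increase to their respective $\varepsilon_0 = 0$ counterparts by the monotone convergence theorem, where the denominator limits are finite and strictly positive. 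This yields ${\rm BL}(\mathbf{f}^{(\varepsilon_0)}) \to {\rm BL}(\mathbf{f})$ in $[0, \infty]$. Choosing $\mathbf{f}$ almost-minimizing for ${\rm I}^{(e)}_{\lambda_0, \infty}(\mathbf{n}, \mathbf{c}, \mathcal{Q})$ then gives the required upper bound $\lim_{\varepsilon_0 \to 0^+} \inf_{\mathbf{g}} {\rm BL}(\mathbf{g}) \le {\rm I}^{(e)}_{\lambda_0, \infty}(\mathbf{n}, \mathbf{c}, \mathcal{Q})$. I do not anticipate any real obstacle: the argument is essentially a soft regularization plus monotone convergence, the only point requiring a moment of thought being the preservation of uniform log-concavity under the multiplier, which is immediate from the convexity of $|x|^4$.
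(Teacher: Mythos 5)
Your argument is correct and essentially identical to the paper's: both multiply by $e^{-\varepsilon_0|x|^4}$ to place the regularized functions in $\mathcal{N}_{\varepsilon_0,\lambda_0}$ (using that an even log-concave function is bounded by its value at the origin), then appeal to the monotone convergence theorem together with $c_i>0$ to recover $\mathrm{BL}(\mathbf{f})$ in the limit. The student's writeup is more explicit about the monotonicity of the class in $\varepsilon_0$ and the passage to the limit of the ratio, but the underlying ideas are the same as the paper's.
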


\begin{proof}
Take any $f_i \in \mathcal{F}_{\lambda_0,\infty}^{(e)}(\mathbb{R}^{n_i})$, and simply let $f_i^{(\varepsilon_0)}(x):= f_i (x)e^{-\varepsilon_0|x|^4}$. 
Then $f_i^{(\varepsilon_0)}$ is integrable, even and $\lambda_0$-uniformly log-concave. 
Since $f_i$ is even log-concave, $f_i(x_i)\le f_i(0) =: C_{f_i}$ and hence $f_i^{(\varepsilon_0)} \in \mathcal{N}_{\varepsilon_0, \lambda_0}(\mathbb{R}^{n_i})$. 
Thus, thanks to the assumption of $c_i>0$, the monotone convergence theorem ensures that 
\begin{align*}
	{\rm I}_{\lambda_0,\infty}^{(e)} ({\bf n}, {\bf c}, \mathcal{Q})
	=
	\inf_{f_i\in\mathcal{F}_{\lambda_0,\infty}^{(e)}(\mathbb{R}^{n_i})}
	\lim_{\varepsilon_0\to0} 
	\textrm{BL}(\mathbf{f}^{(\varepsilon_0)})
	\ge 
	\lim_{\varepsilon_0\to0} 
		\inf_{ f_i \in \mathcal{N}_{\varepsilon_0,\lambda_0}(\mathbb{R}^{n_i}) }
		{\rm BL}(\mathbf{f}).
\end{align*}
The reverse inequality is evident. 
\end{proof}

\begin{lemma}\label{l:ApproxRegIBL}
Let $\lambda_0 \ge 0$. Then 
	\begin{equation}\label{e:ApproxRegIBL}
		{\rm I}_{\lambda_0, \infty}^{(e)} ({\bf n}, {\bf c}, \mathcal{Q})
		=
		\lim_{\lambda \to 0} \lim_{\Lambda\to \infty} 
		{\rm I}_{\lambda_0+\lambda, \Lambda}^{(e)} ({\bf n}, {\bf c}, \mathcal{Q}).
	\end{equation}
\end{lemma}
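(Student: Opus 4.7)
The plan is to establish the two inequalities of the claim separately. The direction $\lim_{\lambda\to 0}\lim_{\Lambda\to\infty} {\rm I}^{(e)}_{\lambda_0+\lambda,\Lambda}({\bf n},{\bf c},\mathcal{Q}) \ge {\rm I}^{(e)}_{\lambda_0,\infty}({\bf n},{\bf c},\mathcal{Q})$ is immediate from the inclusion $\mathcal{F}^{(e)}_{\lambda_0+\lambda,\Lambda}(\R^n) \subset \mathcal{F}^{(e)}_{\lambda_0,\infty}(\R^n)$, valid for $\lambda>0$ and $\Lambda<\infty$: taking infima yields ${\rm I}^{(e)}_{\lambda_0+\lambda,\Lambda} \ge {\rm I}^{(e)}_{\lambda_0,\infty}$, which is preserved by the iterated limits.

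For the reverse inequality, by Lemma \ref{l:Reduction1} it suffices to produce, for every ${\bf f} \in \mathcal{N}_{\varepsilon_0,\lambda_0}(\R^{n_1})\times\cdots\times\mathcal{N}_{\varepsilon_0,\lambda_0}(\R^{n_m})$, an approximating family ${\bf f}^{(\lambda,\Lambda)} \in \mathcal{F}^{(e)}_{\lambda_0+\lambda,\Lambda}$ satisfying $\limsup_{\lambda\to 0}\limsup_{\Lambda\to\infty} {\rm BL}({\bf f}^{(\lambda,\Lambda)}) \le {\rm BL}({\bf f})$. I construct this in two stages. First, set $\tilde{f}_i^{(\lambda)}(x) := f_i(x)\,e^{-\lambda|x|^2/2}$, which is $(\lambda_0+\lambda)$-uniformly log-concave, even, and retains the super-Gaussian tail decay of $f_i$. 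Second, with $\tilde\varphi_i^{(\lambda)} := -\log \tilde{f}_i^{(\lambda)}$, introduce the Moreau--Yosida regularization $(\tilde\varphi_i^{(\lambda)})_\Lambda(x) := \inf_y\{\tilde\varphi_i^{(\lambda)}(y) + \tfrac{\Lambda}{2}|x-y|^2\}$, which is convex with Hessian bounded above by $\Lambda\,{\rm id}$, and define $f_i^{(\lambda,\Lambda)} := \exp(-(\tilde\varphi_i^{(\lambda)})_\Lambda - \tfrac{\mu_\Lambda}{2}|\cdot|^2)$, where $\mu_\Lambda := (\lambda_0+\lambda)^2/(\lambda_0+\lambda+\Lambda)$ is chosen to restore the $(\lambda_0+\lambda)$-uniform log-concavity that Moreau--Yosida had weakened to $\tfrac{(\lambda_0+\lambda)\Lambda}{\lambda_0+\lambda+\Lambda}$-log-concavity. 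Then $f_i^{(\lambda,\Lambda)} \in \mathcal{F}^{(e)}_{\lambda_0+\lambda,\,\Lambda+\mu_\Lambda}$ and $f_i^{(\lambda,\Lambda)} \to \tilde{f}_i^{(\lambda)}$ pointwise as $\Lambda\to\infty$.

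The convergence of ${\rm BL}$ is then handled by dominated convergence. For the denominator, the uniform bound $f_i^{(\lambda,\Lambda)}(x) \le f_i(0)\,e^{-(\lambda_0+\lambda)|x|^2/2}$ (from the $(\lambda_0+\lambda)$-uniform log-concavity and evenness) gives $\int f_i^{(\lambda,\Lambda)}\,dx_i \to \int \tilde{f}_i^{(\lambda)}\,dx_i$ as $\Lambda\to\infty$. For the numerator I split the domain into a large cube, where pointwise convergence together with local uniform boundedness controls the integral, and its complement, where the super-Gaussian decay of $\tilde{f}_i^{(\lambda)}$ persists through the Moreau--Yosida approximation (since the regularization modifies $\tilde\varphi_i^{(\lambda)}$ substantially only inside a $\Lambda$-dependent set shrinking to the origin) and provides uniform domination of $e^{\langle x,\mathcal{Q}x\rangle}$. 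This yields ${\rm BL}({\bf f}^{(\lambda,\Lambda)}) \to {\rm BL}(\tilde{\bf f}^{(\lambda)})$ as $\Lambda\to\infty$; then ${\rm BL}(\tilde{\bf f}^{(\lambda)}) \to {\rm BL}({\bf f})$ as $\lambda\to 0$ by monotone convergence, since the factors $e^{-\lambda|x|^2/2}$ in both the numerator's integrand and the denominator's integrands increase monotonically to $1$. The main obstacle is the tail estimate in the numerator: since $\Lambda$-uniform log-convexity of $f_i^{(\lambda,\Lambda)}$ intrinsically forbids super-Gaussian decay globally, the naive domination by the Gaussian bound alone may fail to be integrable against $e^{\langle x,\mathcal{Q}x\rangle}$ when $\mathcal{Q}$ has positive eigenvalues not dominated by $(\lambda_0+\lambda)$, and one must carefully exploit the essentially local nature of the Moreau--Yosida modification to invoke the super-Gaussian tail decay of $\tilde{f}_i^{(\lambda)}$ itself for the required uniform control.
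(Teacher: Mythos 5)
Your proposal mirrors the paper's structure (trivial inclusion gives one inequality; reduce to $\mathcal{N}_{\varepsilon_0,\lambda_0}$ via Lemma \ref{l:Reduction1}; multiply by a Gaussian to gain $(\lambda_0+\lambda)$-uniform log-concavity; then regularize to gain uniform log-convexity; pass to the limit by dominated convergence), but replaces the paper's heat-flow regularization $(f_i)_{\lambda,\Lambda}=e^{\frac{1}{2\Lambda}\Delta}(f_i)_\lambda$ with a Moreau--Yosida inf-convolution of the potential. Both devices produce $\Lambda$-uniform log-convexity and degrade the $(\lambda_0+\lambda)$-log-concavity to $\frac{(\lambda_0+\lambda)\Lambda}{\lambda_0+\lambda+\Lambda}$ (which is why both arguments need a small correction, yours via the $\mu_\Lambda$ Gaussian and the paper's via shrinking $\lambda_0+\lambda$ to $\lambda_0+\frac{\lambda}{2}$ for $\Lambda$ large), so the architecture is sound.

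The genuine gap is in your handling of the tail estimate, which you flag yourself in the last sentence and then attempt to dismiss. Your claim that ``the regularization modifies $\tilde\varphi_i^{(\lambda)}$ substantially only inside a $\Lambda$-dependent set shrinking to the origin'' is \emph{false} for Moreau--Yosida applied to a super-Gaussian potential. If $\tilde\varphi_i^{(\lambda)}(y)\gtrsim\varepsilon_0|y|^4$, the envelope $(\tilde\varphi_i^{(\lambda)})_\Lambda(x)=\inf_y\{\tilde\varphi_i^{(\lambda)}(y)+\tfrac{\Lambda}{2}|x-y|^2\}$ agrees with $\tilde\varphi_i^{(\lambda)}$ to leading order only on the \emph{bounded} region $|x|\lesssim\sqrt{\Lambda/\varepsilon_0}$; beyond that threshold the inf-convolution caps the quartic and produces purely Gaussian behaviour $(\tilde\varphi_i^{(\lambda)})_\Lambda(x)\approx\frac{\Lambda}{2}|x|^2$. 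In other words, the modification is large \emph{in the tails}, which is precisely the region you claim controls the integral against $e^{\langle x,\mathcal{Q}x\rangle}$; so as stated, your domination argument fails. The correct resolution --- which is exactly the content of the paper's estimate \eqref{e:PWBoundHeat} --- is a two-regime pointwise bound of the form
\begin{equation*}
  f_i^{(\lambda,\Lambda)}(x)\le C\bigl(e^{-c\varepsilon_0|x|^4}+e^{-c\Lambda|x|^2}\bigr),
\end{equation*}
followed by the observation that once a single large $\Lambda_0$ is fixed so that $e^{\langle x,\mathcal{Q}x\rangle}\prod_i e^{-cc_i\Lambda_0|x_i|^2}$ is integrable, the function $C\bigl(e^{-c\varepsilon_0|x|^4}+e^{-c\Lambda_0|x|^2}\bigr)$ dominates uniformly for all $\Lambda\ge\Lambda_0$ (since the $e^{-c\Lambda|x|^2}$ term only gets smaller as $\Lambda$ increases). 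Without establishing this explicit two-regime bound for your Moreau--Yosida envelope --- which is provable, but not by the reasoning you give --- the dominated convergence step and hence the whole reverse inequality is not justified.
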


\begin{proof}
Fix arbitrary $\varepsilon_0>0$, and take arbitrary $f_i \in \mathcal{N}_{\varepsilon_0, \lambda_0}(\mathbb{R}^{n_i})$. 
For $\lambda>0$ and $\Lambda<+\infty$ with $\lambda +\lambda_0 <\Lambda$, put 
$$
(f_i)_{\lambda}(x_i):= f_i(x_i)e^{-\frac12 \lambda |x_i|^2},\quad  
(f_i)_{ \lambda,\Lambda }(x_i):= e^{\frac1{2\Lambda}\Delta } (f_i)_{\lambda}(x_i).
$$
Note that 
$$
(f_i)_{ \lambda,\Lambda }(x_i)
=
\gamma_{\Lambda^{-1} {\rm id}_{\mathbb{R}^n}} \ast (f_i)_\lambda(x_i)
=
\frac{1}{(2\pi/\Lambda)^{n_i/2}} \int_{\mathbb{R}^{n_i}} e^{ - \frac{\Lambda}2|x_i-y_i|^2} (f_i)_{\lambda}(y_i)\, dy_i, 
$$
and the Li--Yau inequality (or applying Lemma \ref{l:ConvUniLog} in Appendix) provides the gain of log-convexity 
$$
- \nabla^2 \log\, (f_i)_{ \lambda,\Lambda }(x_i) 
\le \Lambda.
$$

Thanks also to the assumption that $f_i$ is $\lambda_0$-uniformly log-concave, $(f_i)_\lambda$ is $(\lambda_0 +\lambda)$-uniformly log-concave, and thus $(f_i)_{\lambda,\Lambda}$ is $((\lambda + \lambda_0)^{-1} +\Lambda^{-1})^{-1}$-uniformly log-concave by applying Lemma \ref{l:ConvUniLog} in Appendix. 
Especially taking large enough $\Lambda \ge \Lambda_{\lambda, \lambda_0}>0$ depending on $\lambda_0$ and $\lambda$, we may suppose that $((\lambda + \lambda_0)^{-1} +\Lambda^{-1})^{-1} \ge \lambda_0 + \frac\lambda2$. 
Moreover since $\int_{\mathbb{R}^{n_i}} f_i\, dx_i>0$, it holds that $f_i \not\equiv 0$ a.e. on $\mathbb{R}^{n_i}$. This means that $(f_i)_{\lambda, \Lambda}>0$ on $\mathbb{R}^{n_i}$. 
Therefore, $(f_i)_{\lambda,\Lambda} \in \mathcal{F}_{\lambda_0+\frac\lambda2,\Lambda}^{(e)}(\mathbb{R}^{n_i})$. 

Let us fix $\lambda>0$, and show that 
\begin{equation}\label{e:LimitChange}
	\lim_{\Lambda\to\infty} \textrm{BL}( (\mathbf{f})_{\lambda,\Lambda} ) = \textrm{BL}( (\mathbf{f})_{\lambda} ). 
\end{equation}
To show this, we claim the following pointwise bound 
\begin{equation}\label{e:PWBoundHeat}
	(f_i)_{\lambda,\Lambda}(x_i)
	\le 
	C_{f_i,n_i} 
	\big(
	e^{-c \varepsilon_0 |x_i|^4}
	+ 
	e^{- c\Lambda|x_i|^2 }
	\big) , 
\end{equation}
for some numerical constant $c>0$. 
For the time being, we assume \eqref{e:PWBoundHeat}, and proceed the proof. 
With \eqref{e:PWBoundHeat} in mind, we take a large $\Lambda_0=\Lambda_0( \mathcal{Q},\mathbf{c}, \lambda, \lambda_0 )> \Lambda_{\lambda, \lambda_0}$ such that 
\begin{equation}\label{e:Lambda_0}
\int_{(\mathbb{R}^n)^m} e^{\langle x, \mathcal{Q}x\rangle} e^{- c\Lambda_0 \sum_{i=1}^m c_i |x_i|^2 }\, dx <+\infty. 
\end{equation}
This is possible since $c_i>0$ and $x = (x_1,\ldots,x_m)$. 
Then we let 
$$
F_i(x_i):= C_{f_i,n_i} 
	\big(
	e^{-c \varepsilon_0 |x_i|^4}
	+ 
	e^{- c\Lambda_0|x_i|^2 }
	\big),
	\quad 
	F(x):= e^{\langle x,\mathcal{Q}x\rangle}
	\prod_{i=1}^m F_i(x_i)^{c_i}. 
$$
We here emphasize that the choice of $F_i,F$ is independent of $\Lambda$. 
On the one hand, $F_i \in L^1(\mathbb{R}^{n_i})$ and $F\in L^1( \mathbb{R}^N )$ by virtue of \eqref{e:Lambda_0}. 
On the other hand, \eqref{e:PWBoundHeat} means that 
$$
\Lambda\ge \Lambda_0
\quad \Rightarrow \quad 
(f_i)_{\lambda,\Lambda} \le F_i,\quad 
e^{\langle x,\mathcal{Q}x\rangle} \prod_{i=1}^m (f_i)_{\lambda, \Lambda}(x_i)^{c_i} \le F(x).
$$
Thus, $F_i,F$ are dominating functions that allows us to apply the Lebesgue convergence theorem to see \eqref{e:LimitChange}. 
Since 
$$
\lim_{\lambda\to 0} \textrm{BL}( (\mathbf{f})_\lambda )
= 
\textrm{BL}(\mathbf{f})
$$
is an easy consequence of the monotone convergence theorem and $c_i>0$, 
we derive that 
$$
\lim_{\lambda\to 0} \lim_{\Lambda\to\infty} \textrm{BL}( (\mathbf{f})_{\lambda,\Lambda} )
= 
\textrm{BL}(\mathbf{f}).
$$
By recalling $(f_i)_{\lambda,\Lambda}\in\mathcal{F}_{\lambda_0+\frac\lambda2,\Lambda}^{(e)}(\mathbb{R}^{n_i})$, this confirms that 
\begin{align*}
\inf_{f_i\in \mathcal{N}_{\varepsilon_0,\lambda_0}(\mathbb{R}^{n_i})} \textrm{BL}(\mathbf{f}) 
&= 
\inf_{f_i\in \mathcal{N}_{\varepsilon_0,\lambda_0}(\mathbb{R}^{n_i})} 
\lim_{\lambda\to 0} \lim_{\Lambda\to\infty} \textrm{BL}( (\mathbf{f})_{\lambda,\Lambda} )
\\
&\ge 
\lim_{\lambda\to 0} \lim_{\Lambda\to\infty} 
{\rm I}_{\lambda_0+\frac\lambda2, \Lambda}^{(e)} ({\bf n}, {\bf c}, \mathcal{Q})
=
\lim_{\lambda\to 0} \lim_{\Lambda\to\infty} 
{\rm I}_{\lambda_0+\lambda, \Lambda}^{(e)} ({\bf n}, {\bf c}, \mathcal{Q}).
\end{align*}
Since this is uniform in $\varepsilon_0$, from Lemma \ref{l:Reduction1}, 
$$
{\rm I}_{\lambda_0,\infty}^{(e)} ({\bf n}, {\bf c}, \mathcal{Q}) \ge \lim_{\lambda\to 0} \lim_{\Lambda\to\infty} 
{\rm I}_{\lambda_0+\lambda, \Lambda}^{(e)} ({\bf n}, {\bf c}, \mathcal{Q}).
$$
This concludes \eqref{e:ApproxRegIBL} since the reverse inequality is evident. 

Let us complete the proof by giving the proof of \eqref{e:PWBoundHeat}. 
Since $f_i\in \mathcal{N}_{\varepsilon_0,\lambda_0}(\mathbb{R}^{n_i})$, we have that $(f_i)_\lambda \le f_i \le C_{f_i} e^{-\varepsilon_0 |x_i|^4}$. 
Thus, 
\begin{align*}
(f_i)_{\lambda,\Lambda}(x_i)
&\le 
C_{f_i} 
\big( \frac{\Lambda}{2\pi} \big)^{\frac{n_i}2}	\int_{\mathbb{R}^{n_i}} e^{-\frac{\Lambda}2|y_i|^2} e^{- \varepsilon_0|x_i-y_i|^4}\, dy_i \\
&= 
C_{f_i,n_i} 
\int_{\mathbb{R}^{n_i}} 
e^{-\frac12|y_i|^2} e^{-\varepsilon_0|x_i-\frac{1}{\sqrt{\Lambda}}y_i|^4}\, dy_i\\
&= 
C_{f_i,n_i} 
\bigg(
\int_{|y_i|\le \frac{\sqrt{\Lambda}}{10}|x_i|}  
e^{-\frac12|y_i|^2} e^{-\varepsilon_0|x_i-\frac{1}{\sqrt{\Lambda}}y_i|^4}\, dy_i\\
&\qquad \qquad 
+
\int_{|y_i|\ge \frac{\sqrt{\Lambda}}{10}|x_i|}
e^{-\frac12|y_i|^2} e^{-\varepsilon_0|x_i-\frac{1}{\sqrt{\Lambda}}y_i|^4}\, dy_i
\bigg).
\end{align*}
For the first term, notice that 
$$
|y_i|\le \frac{\sqrt{\Lambda}}{10}|x_i|
\quad \Rightarrow 
\quad 
\big|x_i-\frac{1}{\sqrt{\Lambda}} y_i\big|
\ge 
\big||x_i|-\frac{1}{\sqrt{\Lambda}} |y_i|\big|
\ge \frac{9}{10} |x_i|.
$$
Thus, 
\begin{align*}
	\int_{|y_i|\le \frac{\sqrt{\Lambda}}{10}|x_i|}  
	e^{-\frac12|y_i|^2} e^{-\varepsilon_0|x_i-\frac{1}{\sqrt{\Lambda}}y_i|^4}\, dy_i
	\le 
    C_{n_i} e^{-(\frac{9}{10})^4\varepsilon_0|x_i|^4}.
\end{align*}
 For the second term, in view of the asymptotic estimate $\int_{K}^\infty e^{-\frac12 t^2}\, dt \sim c \frac{1}{K}e^{- \frac12 K^2}$ as $K\to \infty$, 
\begin{align*}
 	\int_{|y_i|\ge \frac{\sqrt{\Lambda}}{10}|x_i|}
e^{-\frac12|y_i|^2} e^{-\varepsilon_0|x_i-\frac{1}{\sqrt{\Lambda}}y_i|^4}\, dy_i  
&\le C
e^{-c \Lambda |x_i|^2}. 
\end{align*}
These two bounds conclude \eqref{e:PWBoundHeat}. 
\end{proof}

\begin{proof}[Proof of Theorem \ref{t:GaussianSaturation}]
Fix $\lambda_0\ge0$. 
By Lemma \ref{l:ApproxRegIBL} and Theorem \ref{t:RegGaussianSaturation}, we see that 
$$
{\rm I}_{\lambda_0,\infty}^{(e)} ({\bf n}, {\bf c}, \mathcal{Q})
		=
		\lim_{\lambda \downarrow 0} \lim_{\Lambda\to \infty} 
		{\rm I}_{\mathcal{G}_{\lambda_0+\lambda, \Lambda}} ({\bf n}, {\bf c}, \mathcal{Q})
  \ge
  {\rm I}_{\mathcal{G}_{\lambda_0,\infty}} ({\bf n}, {\bf c}, \mathcal{Q}). 
$$
On the other hand, since the opposite inequality is evident, we conclude the desired assertion. 
\end{proof}

\begin{remark}
    Without any difficulty, one may get rid of the assumption of the positivity of $f_i$ in Theorem \ref{t:GaussianSaturation} even when $\lambda>0$, whenever $\Lambda=\infty$. 
    Namely we may also show that 
    \begin{equation}\label{e:NonnegGaussSat}
    \inf_{{\bf f}} {\rm BL}({\bf f})
    =
    {\rm I}_{\lambda, \infty}^{(e)} ({\bf n}, {\bf c}, \mathcal{Q}), 
    \end{equation}
    where the infimum is taken over all {\it nonnegative} $f_i$ which is even, $\lambda$-uniformly log-concave (on its support) and  $0< \int_{\mathbb{R}^{n_i}}\, f_i\, dx_i < \infty$. 
    This follows through the standard approximation argument combining with Theorem \ref{t:GaussianSaturation} and the Lebesgue convergence theorem. 
\end{remark}

\section{Proof of  Theorem \ref{t:MainGaussianSaturation}}\label{Section3}
In this section, we derive Theorem \ref{t:MainGaussianSaturation} from Theorem \ref{t:MainIBL}. 
The basic idea may be found in \eqref{e:IBL->FBS}, that is to consider an appropriate family of Brascamp--Lieb data $(\mathbf{n},\mathbf{c}(p),\mathcal{Q}_p)$, $p>0$, and then take a limit $p\to0$ in the corresponding inverse Brascamp--Lieb inequality. 
To run out this strategy rigorously, we will work in the regularized framework again, and then apply the limiting argument to get rid of the regularization in the end. 
For this purpose, let us introduce further notations. 
Let $(\mathbf{n},\mathbf{c},\mathcal{Q})$ be arbitrary Brascamp--Lieb datum. 
We denote a class of regularized functions that satisfies the generalized Legendre duality relation \eqref{e:GeneDual} by 
$$
\mathcal{D}_{\lambda,\Lambda}(\mathbf{n},\mathbf{c},\mathcal{Q})
:= 
\big\{
\mathbf{f} \in \mathcal{F}^{(e)}_{\lambda,\Lambda}(\mathbb{R}^{n_1})\times \cdots \times \mathcal{F}^{(e)}_{\lambda,\Lambda}(\mathbb{R}^{n_m}):
\prod_{i=1}^m f_i(x_i)^{c_i} \le e^{-\langle x, \mathcal{Q}x\rangle }\big\}
$$
Furthermore, when $\Lambda=\infty$, we allow that the support of ${\bf f} \in \mathcal{D}_{\lambda,\Lambda}(\mathbf{n},\mathbf{c},\mathcal{Q})$ is not the whole space. 
Here, we understand the pointwise inequality holds for any $x = (x_1,\ldots,x_m) \in \mathbb{R}^N$ in the above. 
We also use a notation $\mathcal{D}_{LC}(\mathbf{n},\mathbf{c},\mathcal{Q}) := \mathcal{D}_{0,\infty}(\mathbf{n},\mathbf{c},\mathcal{Q})$. 
The Gaussian analogue is given by 
$$
\mathcal{D}_{\mathbf{g},\lambda,\Lambda}(\mathbf{n},\mathbf{c},\mathcal{Q})
:= 
\big\{
\mathbf{A} : \lambda {\rm id}_{n_i}\le  A_i \le \Lambda {\rm id}_{n_i},\; 
\sum_{i=1}^m c_i P_i^* A_i P_i -2\mathcal{Q} \ge0  
\big\}. 
$$
and 
$ \mathcal{D}_{\mathbf{g}} := \mathcal{D}_{\mathbf{g},0,\infty} $. 

We first fix arbitrary $\mathbf{n},c_1^{(0)},\ldots,c_m^{(0)}>0$ and $\mathcal{Q}^{(0)}$. 
The appropriate family of Brascamp--Lieb data is given by 
$$
c_i=c_i(p):= \frac1{p} ( c_i^{(0)} + p) ,\quad \mathcal{Q}=\mathcal{Q}_p:= \frac1p \mathcal{Q}^{(0)},
$$
for $p>0$. 
The first step is to take a limit $p\to0$ in the inverse Brascamp--Lieb inequality  under frozen $0<\lambda < \Lambda\le\infty$. 
We here emphasize that the case of $\Lambda = \infty$ is allowed in the following arguments. In other words, we will appeal to the benefit of the regularization of $\lambda$ only. 

\begin{lemma}\label{l:p->0BLUpper}
    Fix $0<\lambda < \Lambda \le +\infty$ and let $f_i \in \mathcal{F}_{\lambda,\Lambda}^{(e)}(\mathbb{R}^{n_i})$ for $i=1, \dots, m$. Then 
    \begin{equation}
\limsup_{p\to0} {\rm I}^{(e)}_{\lambda,\Lambda}(\mathbf{n},\mathbf{c}(p),\mathcal{Q}_p)^p 
\prod_{i=1}^m \big( \int_{\mathbb{R}^{n_i}} f_i\, dx_i \big)^{c_i^{(0)}}
\le \sup_{x \in \mathbb{R}^N} e^{\langle x,\mathcal{Q}^{(0)} x\rangle }\prod_{i=1}^m f_i(x_i)^{c_i^{(0)}}.
    \end{equation}
\end{lemma}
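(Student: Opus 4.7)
The plan is to test the infimum defining ${\rm I}^{(e)}_{\lambda,\Lambda}(\mathbf{n},\mathbf{c}(p),\mathcal{Q}_p)$ against the fixed $\mathbf{f}$ and then pass to the limit $p\to 0^+$ through an $L^q$--$L^\infty$ convergence argument. The choices $c_i(p) = (c_i^{(0)}+p)/p$ and $\mathcal{Q}_p = \mathcal{Q}^{(0)}/p$ are designed precisely so that, after raising the Brascamp--Lieb functional to the power $p$, the inner integrand collapses into $F^{1/p}\cdot \prod_i f_i$ for a function $F$ independent of $p$, which is exactly the integrand of an $L^{1/p}$-norm against the finite measure $\prod_i f_i\, dx$.

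Concretely, I would set
\begin{equation*}
F(x) := e^{\langle x, \mathcal{Q}^{(0)} x\rangle}\prod_{i=1}^m f_i(x_i)^{c_i^{(0)}}, \qquad d\mu(x) := \prod_{i=1}^m f_i(x_i)\, dx,
\end{equation*}
noting that $\mu$ is a finite Borel measure on $\mathbb{R}^N$ since each $f_i \in L^1(\mathbb{R}^{n_i})$. A direct rearrangement using $c_i(p) = (c_i^{(0)}+p)/p$ then gives
\begin{equation*}
{\rm BL}(\mathbf{n},\mathbf{c}(p),\mathcal{Q}_p;\mathbf{f})^p \prod_{i=1}^m \Big(\int_{\mathbb{R}^{n_i}} f_i\, dx_i\Big)^{c_i^{(0)}} = \frac{\big(\int_{\mathbb{R}^N} F^{1/p}\, d\mu\big)^p}{\prod_{i=1}^m \big(\int_{\mathbb{R}^{n_i}} f_i\, dx_i\big)^p}.
\end{equation*}
Since ${\rm I}^{(e)}_{\lambda,\Lambda}(\mathbf{n},\mathbf{c}(p),\mathcal{Q}_p) \le {\rm BL}(\mathbf{n},\mathbf{c}(p),\mathcal{Q}_p;\mathbf{f})$ by the infimum definition and the denominator on the right tends to $1$ as $p\to 0$ (as each $\int f_i\in(0,\infty)$), the task reduces to establishing
\begin{equation*}
\limsup_{p\to 0^+}\Big(\int_{\mathbb{R}^N} F^{1/p}\, d\mu\Big)^p \le \sup_{x\in \mathbb{R}^N} F(x).
\end{equation*}

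If $\sup F = +\infty$ the inequality is trivial. Otherwise $F$ is bounded, hence $F \in L^q(\mu)$ for every $q\in[1,\infty]$ (with $\int F^{1/p}\,d\mu \le (\sup F)^{1/p}\mu(\mathbb{R}^N)<\infty$), and the standard convergence for finite measures
\begin{equation*}
\lim_{q\to\infty}\Big(\int F^q\, d\mu\Big)^{1/q} = \|F\|_{L^\infty(\mu)}
\end{equation*}
applied with $q = 1/p$ yields the desired bound upon using $\|F\|_{L^\infty(\mu)} \le \sup F$. There is no serious obstacle here; the only care needed is in carrying out the exponent bookkeeping so that the various $p$-scalings cancel correctly, after which the classical $L^q\!\to\! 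L^\infty$ limit closes the argument.
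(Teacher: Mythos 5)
Your proof is correct and follows essentially the same route as the paper: rearrange the Brascamp--Lieb functional at the data $(\mathbf{c}(p),\mathcal{Q}_p)$ so that the $p$-th power collapses to an $L^{1/p}$-norm of the $p$-independent quantity $F$, test the infimum against the fixed $\mathbf{f}$, and send $p\to 0$. The one small variation is in how you close the limit: you treat $d\mu = \prod_i f_i\,dx$ as the finite reference measure and invoke $\|F\|_{L^q(\mu)} \to \|F\|_{L^\infty(\mu)}$, whereas the paper instead uses $\lambda$-uniform log-concavity to dominate $f_i(x_i) \le f_i(0)e^{-\frac{\lambda}{2}|x_i|^2}$ and integrates against the resulting Gaussian majorant. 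Your version is a touch cleaner since it bypasses the uniform log-concavity estimate, using only $f_i \in L^1$ with $\int f_i \in (0,\infty)$ (in fact the one-line bound $\int F^{1/p}\,d\mu \le (\sup F)^{1/p}\mu(\mathbb{R}^N)$ already closes the argument without the full $L^q\to L^\infty$ theorem); the membership $f_i\in\mathcal{F}^{(e)}_{\lambda,\Lambda}$ is still needed, of course, to compare against the infimum defining ${\rm I}^{(e)}_{\lambda,\Lambda}$.
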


\begin{remark}
    In Lemma \ref{l:p->0BLUpper}, when $\Lambda=\infty$, $f_i$ is allowed to take the value $0$ by virtue of \eqref{e:NonnegGaussSat}. 
\end{remark}

\begin{proof}
We may suppose that 
$$
M:= \sup_{ x \in \R^N}  e^{\langle x,\mathcal{Q}^{(0)} x\rangle }\prod_{i=1}^m f_i(x_i)^{c_i^{(0)}} <+\infty,  
$$
otherwise the conclusion is obvious. 
From the argument in Lemma \ref{l:Convexity},  $f_i(x_i) \le f_i(0) e^{-\frac12 \lambda |x_i|^2}$, and hence 
\begin{align*}
F_p(x)
&:= 
\big( e^{\langle x,\mathcal{Q}_px\rangle} \prod_{i=1}^m f_i(x_i)^{c_i(p)} \big)^p 
\le 
M e^{-\frac{\lambda p}2|x|^2} \prod_{i=1}^m f_i(0)^{p} .
\end{align*}
Remark that $0<f_i(0)<+\infty$ since $f_i \in \mathcal{F}_{\lambda,\Lambda}^{(e)}(\mathbb{R}^{n_i})$. 
This confirms that 
\begin{align*}
\bigg(
\int_{\mathbb{R}^N} e^{ \langle x, \mathcal{Q}_px\rangle } \prod_{i=1}^m f_i(x_i)^{c_i(p)}\, dx 
\bigg)^p
&\le 
M \prod_{i=1}^m f_i(0)^{p} \big( \int_{\mathbb{R}^N} e^{-\frac{\lambda }2 |x|^2} \, dx \big)^p \\
&= 
M \prod_{i=1}^m f_i(0)^{p} 
\big( (2\pi \lambda^{-1} )^{\frac{N}2} \big)^p,
\end{align*}
from which we see that 
\begin{equation}\label{e:L-inftyLimit}
	\limsup_{p\to0} \bigg(
\int_{\mathbb{R}^N} e^{ \langle x, \mathcal{Q}_px\rangle } \prod_{i=1}^m f_i(x_i)^{c_i(p)}\, dx 
\bigg)^p
\le 
M = \sup_{ x \in \R^N}  e^{\langle x,\mathcal{Q}^{(0)} x\rangle }\prod_{i=1}^m f_i(x_i)^{c_i^{(0)}}.  
\end{equation}

On the other hand, by definition, we have that 
\begin{align*}
& \bigg(
\int_{\mathbb{R}^N} e^{ \langle x, \mathcal{Q}_px\rangle } \prod_{i=1}^m f_i(x_i)^{c_i(p)}\, dx 
\bigg)^p
\ge 
\textrm{I}_{\lambda,\Lambda}(\mathbf{n},\mathbf{c}(p),\mathcal{Q}_p)^p 
\prod_{i=1}^m \big( \int_{\mathbb{R}^{n_i}} f_i\, dx_i \big)^{c_i^{(0)} + p}. 
\end{align*}
Thus, together with \eqref{e:L-inftyLimit}, we obtain the desired assertion. 
\end{proof}

For the Gaussian constant, we have the following: 
\begin{lemma}\label{l:p->0BLLower}
     Fix $0<\lambda < \Lambda \le +\infty$. Then 
     \begin{align*}
	\liminf_{p\to0} 
	{\rm I}_{\mathcal{G}_{\lambda,\Lambda}}( \mathbf{n},\mathbf{c}(p), \mathcal{Q}_p )^p 
	\ge
	\inf_{\mathbf{A} \in \mathcal{D}_{\mathbf{g},\lambda,\Lambda}(\mathbf{n},\mathbf{c}^{(0)},\mathcal{Q}^{(0)}) } \prod_{i=1}^m \big( {\rm det}\, 2\pi A_i^{-1} \big)^{ -\frac12 c_i^{(0)}  }. 
\end{align*}
\end{lemma}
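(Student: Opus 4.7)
My plan is to compute the Gaussian Brascamp--Lieb functional in closed form and then pass to the limit $p\to 0$ along a near-infimizing sequence via a compactness argument. Evaluating the Gaussian integrals (which is finite precisely when $M_p(\mathbf{A}):= \sum_{i=1}^m (c_i^{(0)}+p) P_i^* A_i P_i - 2\mathcal{Q}^{(0)} > 0$) and using the identities $c_i(p)\, p = c_i^{(0)} + p$ and $\det(p^{-1} M_p(\mathbf{A})) = p^{-N}\det M_p(\mathbf{A})$, I obtain the explicit formula
\begin{equation*}
{\rm BL}(\mathbf{n}, \mathbf{c}(p), \mathcal{Q}_p; \mathbf{A})^p
=
(2\pi)^{-\frac{1}{2}\sum_{i=1}^m n_i c_i^{(0)}}\, p^{Np/2}\, \bigl(\det M_p(\mathbf{A})\bigr)^{-p/2}
\prod_{i=1}^m (\det A_i)^{(c_i^{(0)}+p)/2}.
\end{equation*}

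For each $p>0$ I pick $\mathbf{A}^{(p)} = (A_1^{(p)}, \ldots, A_m^{(p)})$ with $A_i^{(p)} \in \mathcal{G}_{\lambda,\Lambda}(\R^{n_i})$ such that ${\rm BL}(\mathbf{A}^{(p)}) \le {\rm I}_{\mathcal{G}_{\lambda,\Lambda}}(\mathbf{n}, \mathbf{c}(p), \mathcal{Q}_p) + p$ (if $\mathcal{D}_{\mathbf{g},\lambda,\Lambda}(\mathbf{n},\mathbf{c}^{(0)},\mathcal{Q}^{(0)})$ is empty, then both sides of the claim are $+\infty$ and the statement is trivial). I claim that $(\mathbf{A}^{(p)})$ is uniformly bounded for small $p$: the case $\Lambda<\infty$ is immediate, while for $\Lambda = \infty$, fixing any $\mathbf{A}^0 \in \mathcal{D}_{\mathbf{g},\lambda,\infty}$ (taking $A_i^0$ large enough that $M_0(\mathbf{A}^0) > 0$) yields a uniform upper bound on ${\rm BL}(\mathbf{A}^{(p)})^p$ via the formula above, and a direct asymptotic analysis of that formula shows that $\|A_i^{(p)}\|_{\rm op} \to \infty$ along any subsequence would drive ${\rm BL}(\mathbf{A}^{(p)})^p \to \infty$ at rate $\sim \|A_i^{(p)}\|^{n_i c_i^{(0)}/2}$, contradicting near-infimality. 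By compactness I extract a subsequence $p_k \to 0$ with $A_i^{(p_k)} \to A_i^* \in \mathcal{G}_{\lambda,\Lambda}$, and passing to the limit in $M_{p_k}(\mathbf{A}^{(p_k)}) > 0$ yields $\sum_i c_i^{(0)} P_i^* A_i^* P_i - 2\mathcal{Q}^{(0)} \ge 0$, so $\mathbf{A}^* \in \mathcal{D}_{\mathbf{g},\lambda,\Lambda}(\mathbf{n},\mathbf{c}^{(0)},\mathcal{Q}^{(0)})$.

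Finally, I lower-bound ${\rm BL}(\mathbf{A}^{(p_k)})^{p_k}$ term-by-term: $p_k^{Np_k/2} \to 1$; the uniform boundedness of $\mathbf{A}^{(p_k)}$ gives a uniform upper bound $\det M_{p_k}(\mathbf{A}^{(p_k)}) \le C$, whence $(\det M_{p_k})^{-p_k/2} \ge C^{-p_k/2} \to 1$; and by continuity $\prod_i (\det A_i^{(p_k)})^{(c_i^{(0)}+p_k)/2} \to \prod_i (\det A_i^*)^{c_i^{(0)}/2}$. Combining and rewriting,
\begin{equation*}
\liminf_{k\to\infty} {\rm BL}(\mathbf{A}^{(p_k)})^{p_k}
\ge
(2\pi)^{-\frac{1}{2}\sum_i n_i c_i^{(0)}} \prod_{i=1}^m (\det A_i^*)^{c_i^{(0)}/2}
=
\prod_{i=1}^m \bigl(\det 2\pi (A_i^*)^{-1}\bigr)^{-c_i^{(0)}/2},
\end{equation*}
which is at least the right-hand side of the claim since $\mathbf{A}^* \in \mathcal{D}_{\mathbf{g},\lambda,\Lambda}$. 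The main subtlety is the possible degeneracy of the limit matrix $M_0(\mathbf{A}^*)$, which may have zero eigenvalues; crucially, I only require an \emph{upper} bound on $\det M_{p_k}$ (which follows for free from the boundedness of $\mathbf{A}^{(p_k)}$), not a lower bound, so the potential degeneracy is harmless in this direction of the inequality. The other delicate point is establishing the compactness of near-infimizers when $\Lambda = \infty$, which I handle by exploiting the explicit scaling behavior in the formula.
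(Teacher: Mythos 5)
Your closed-form expression for ${\rm BL}(\mathbf{n},\mathbf{c}(p),\mathcal{Q}_p;\mathbf{A})^p$ is correct and, after a change of variables, identical to the formula the paper derives; your passage to the limit via compactness of a near-minimizing sequence and your observation that only an \emph{upper} bound on $\det M_p(\mathbf{A})$ is required are also the paper's route (the paper's uniform lower bound on $\mathcal{Q}_{\mathbf{A}}^{(0)}$, coming from $A_i\ge\lambda\,{\rm id}$, is equivalent to your upper bound on $\det M_p(\mathbf{A})/\prod_i\det A_i$). So in spirit this is essentially the same argument.

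There is, however, one genuine gap in the bookkeeping. You choose $\mathbf{A}^{(p)}$ so that ${\rm BL}(\mathbf{A}^{(p)})\le {\rm I}_{\mathcal{G}_{\lambda,\Lambda}}(\mathbf{n},\mathbf{c}(p),\mathcal{Q}_p)+p$, i.e.\ an additive slack of size $p$ on ${\rm BL}$ itself, and you conclude by establishing $\liminf_k{\rm BL}(\mathbf{A}^{(p_k)})^{p_k}\ge L$, where $L$ denotes the right-hand side of the lemma. But ${\rm BL}(\mathbf{A}^{(p_k)})\ge{\rm I}_{\mathcal{G}_{\lambda,\Lambda}}(\mathbf{n},\mathbf{c}(p_k),\mathcal{Q}_{p_k})$, so this inequality by itself is in the wrong direction; what the near-infimizer condition gives after raising to the power $p_k$ is $\liminf_k\big({\rm I}(p_k)+p_k\big)^{p_k}\ge L$, and that does not imply $\liminf_k{\rm I}(p_k)^{p_k}\ge L$. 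The reason is that ${\rm I}_{\mathcal{G}_{\lambda,\Lambda}}(\mathbf{n},\mathbf{c}(p),\mathcal{Q}_p)$ can decay exponentially in $1/p$ (your own lower bound only yields ${\rm I}(p)\ge c^{1/p}$ with $c=r_pL_p$, and this order is attained, e.g.\ when $\lambda<2\pi$), in which regime $\big({\rm I}(p)+p\big)^p\sim p^p\to1$ while ${\rm I}(p)^p$ can converge to a constant $c<1$. The fix is immediate: since $t\mapsto t^p$ is increasing, take $\mathbf{A}^{(p)}$ to be a near-infimizer of the \emph{$p$-th power}, i.e.\ ${\rm BL}(\mathbf{A}^{(p)})^p\le{\rm I}(p)^p+p$, or equivalently use multiplicative slack ${\rm BL}(\mathbf{A}^{(p)})\le(1+p)\,{\rm I}(p)$. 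With that change the rest of your argument (boundedness of $\mathbf{A}^{(p)}$ when $\Lambda=\infty$, the term-by-term limit) goes through unchanged and the lemma follows.
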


\begin{proof}
Fix any $A_i>0$ such that $\lambda\, {\rm id}_{n_i} \le A_i \le \Lambda\, {\rm id}_{n_i}$ and 
\begin{equation}\label{e:Assump8/25}
\sum_{i=1}^m c_i(p) P_i^* A_i P_i - 2 \mathcal{Q}_p
=
\frac1p 
\big( \sum_{i=1}^m (c_i^{(0)} +p) P_i^* A_i P_i - 2 \mathcal{Q}^{(0)} \big) > 0.
\end{equation}
Then we compute that 
\begin{align*}
	&\bigg(
	\int_{\mathbb{R}^N} 
	e^{\langle x, \mathcal{Q}_p x\rangle} 
	\prod_{i=1}^m 
	\gamma_{A_i^{-1}}(x_i)^{ c_i(p) }\, dx 
	\bigg)^p\\
	&= 
	\prod_{i=1}^m \big( {\rm det}\, 2\pi A_i^{-1} \big)^{ -\frac12( c_i^{(0)} + p ) } 
	\big( {\rm det}\, A_i^{-1} \big)^{ \frac{p}2 }
	\bigg(
	p^{\frac{N}2} 
	\int_{\mathbb{R}^N} 
	e^{ \langle x , \mathcal{Q}_{\mathbf{A}}^{(0)} x\rangle } 
	\prod_{i=1}^m e^{ -\frac12 ( c_i^{(0)} + p ) |x_i|^2 } \, dx 
	\bigg)^p \\
	&= 
	\prod_{i=1}^m \big( {\rm det}\, 2\pi A_i^{-1} \big)^{ -\frac12 c_i^{(0)}  } 
	(\frac{p}{2\pi})^{\frac{N}2p}  
	\bigg(
	\int_{\mathbb{R}^N} 
	e^{ \langle x , \mathcal{Q}_{\mathbf{A}}^{(0)} x\rangle } 
	\prod_{i=1}^m e^{ -\frac12 ( c_i^{(0)} + p ) |x_i|^2 } \, dx 
	\bigg)^p, 
\end{align*}
where 
$$
\mathcal{Q}_{\mathbf{A}}^{(0)}:= 
{\rm diag}\, (A_1^{-\frac12},\ldots, A_m^{-\frac12})
\mathcal{Q}^{(0)}
{\rm diag}\, (A_1^{-\frac12},\ldots, A_m^{-\frac12}). 
$$
If we decompose $\mathcal{Q}^{(0)} = P_0^* \mathcal{Q}_+^{(0)} P_0 - P_{m+1}^* \mathcal{Q}^{(0)}_- P_{m+1}$, then the assumption $\lambda\, {\rm id}_{n_i} \le A_i \le \Lambda\, {\rm id}_{n_i}$ yields that 
\begin{align*}
\mathcal{Q}_{\mathbf{A}}^{(0)} 
&\ge 
- 
{\rm diag}\, (A_1^{-\frac12},\ldots, A_m^{-\frac12})
P_{m+1}^* \mathcal{Q}^{(0)}_- P_{m+1}
{\rm diag}\, (A_1^{-\frac12},\ldots, A_m^{-\frac12})\\
&\ge 
- \lambda^{-1} \|P_{m+1}^* \mathcal{Q}^{(0)}_- P_{m+1}\|_{\rm op}\, {\rm id}_{\R^N},
\end{align*}
which is uniform in $\mathbf{A}$. 
Now notice that the set of $\mathbf{A}$ satisfying the condition \eqref{e:Assump8/25} is monotone decreasing as $p\to0$. 
Thus, in the limit, we have only to consider $\mathbf{A}$ satisfying  $\sum_i c_i^{(0)} P_i^* A_i^{-1} P_i - 2\mathcal{Q}^{(0)} \ge 0$, see \cite[(76)]{CouLiu}. 
Thus, we conclude the desired assertion. 
\end{proof}

The following result is the regularized version of Theorem \ref{t:MainGaussianSaturation}.

\begin{theorem}\label{t:RegGaussSatKW}
    Fix $0<\lambda < \Lambda \le +\infty$. Then 
    \begin{align*}
\sup_{{\bf f} \in \mathcal{D}_{\lambda, \Lambda}({\bf n}, {\bf c}^{(0)}, \mathcal{Q}^{(0)})} \prod_{i=1}^m \big( \int_{\mathbb{R}^{n_i}} f_i\, dx_i \big)^{c_i^{(0)}}
= 
\sup_{\mathbf{A} \in \mathcal{D}_{\mathbf{g}, \lambda, \Lambda}({\bf n}, {\bf c}^{(0)}, \mathcal{Q}^{(0)})} \prod_{i=1}^m \big( {\rm det}\, 2\pi A_i^{-1} \big)^{ \frac12 c_i^{(0)}  }. 
    \end{align*}
\end{theorem}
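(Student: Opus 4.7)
The plan is to deduce Theorem \ref{t:RegGaussSatKW} from the regularized symmetric inverse Brascamp--Lieb inequality (Theorem \ref{t:RegGaussianSaturation}) by applying it along the one-parameter family of Brascamp--Lieb data
\begin{equation*}
c_i(p)=\frac{c_i^{(0)}+p}{p},\qquad \mathcal{Q}_p=\frac{1}{p}\mathcal{Q}^{(0)},\qquad p>0,
\end{equation*}
and then letting $p\to 0$ by invoking Lemmas \ref{l:p->0BLUpper} and \ref{l:p->0BLLower}. The inequality $\ge$ is a direct verification; the substantive direction is $\le$, which is obtained by this limiting procedure.

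First, the trivial direction. For any $\mathbf{A}\in\mathcal{D}_{\mathbf{g},\lambda,\Lambda}(\mathbf{n},\mathbf{c}^{(0)},\mathcal{Q}^{(0)})$, the Gaussians $g_{A_i}$ satisfy $\lambda\,\mathrm{id}_{n_i}\le A_i\le\Lambda\,\mathrm{id}_{n_i}$ and therefore lie in $\mathcal{F}^{(e)}_{\lambda,\Lambda}(\mathbb{R}^{n_i})$. Since the condition $\sum_i c_i^{(0)}P_i^*A_iP_i-2\mathcal{Q}^{(0)}\ge 0$ is exactly equivalent to the generalized Legendre duality relation \eqref{e:GeneDual} holding for the tuple $(g_{A_1},\ldots,g_{A_m})$ with exponents $\mathbf{c}^{(0)}$, one has $(g_{A_1},\ldots,g_{A_m})\in\mathcal{D}_{\lambda,\Lambda}(\mathbf{n},\mathbf{c}^{(0)},\mathcal{Q}^{(0)})$. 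Since $\int g_{A_i}=(\det 2\pi A_i^{-1})^{1/2}$, taking the supremum over $\mathbf{A}$ on the Gaussian side is bounded by the supremum over all admissible $\mathbf{f}$.

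For the main direction, fix an arbitrary $\mathbf{f}\in\mathcal{D}_{\lambda,\Lambda}(\mathbf{n},\mathbf{c}^{(0)},\mathcal{Q}^{(0)})$. Since each $f_i\in\mathcal{F}^{(e)}_{\lambda,\Lambda}(\mathbb{R}^{n_i})$, Theorem \ref{t:RegGaussianSaturation} applied at the datum $(\mathbf{n},\mathbf{c}(p),\mathcal{Q}_p)$ gives
\begin{equation*}
\int_{\mathbb{R}^N}e^{\langle x,\mathcal{Q}_p x\rangle}\prod_{i=1}^m f_i(x_i)^{c_i(p)}\,dx\;\ge\;\mathrm{I}_{\mathcal{G}_{\lambda,\Lambda}}(\mathbf{n},\mathbf{c}(p),\mathcal{Q}_p)\prod_{i=1}^m\Big(\int_{\mathbb{R}^{n_i}}f_i\,dx_i\Big)^{c_i(p)}.
\end{equation*}
Raising both sides to the $p$-th power, the mass factor becomes $\prod_i(\int f_i)^{c_i^{(0)}+p}$, which converges to $\prod_i(\int f_i)^{c_i^{(0)}}$ as $p\to 0$. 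The $\limsup$ of the $p$-th power of the left-hand side is controlled by Lemma \ref{l:p->0BLUpper} and is at most $\sup_x e^{\langle x,\mathcal{Q}^{(0)}x\rangle}\prod_i f_i(x_i)^{c_i^{(0)}}$, which is bounded by $1$ precisely because $\mathbf{f}$ satisfies the generalized Legendre duality relation \eqref{e:GeneDual}. The $\liminf$ of the $p$-th power of the Gaussian constant is bounded below via Lemma \ref{l:p->0BLLower} by $\inf_{\mathbf{A}\in\mathcal{D}_{\mathbf{g},\lambda,\Lambda}}\prod_i(\det 2\pi A_i^{-1})^{-c_i^{(0)}/2}$. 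Combining these through the elementary inequality $b\cdot\liminf a_p\le\limsup(a_p b)$ (valid for the nonnegative constant $b=\prod_i(\int f_i)^{c_i^{(0)}}$) yields
\begin{equation*}
\prod_{i=1}^m\Big(\int_{\mathbb{R}^{n_i}}f_i\,dx_i\Big)^{c_i^{(0)}}\cdot\inf_{\mathbf{A}\in\mathcal{D}_{\mathbf{g},\lambda,\Lambda}}\prod_{i=1}^m(\det 2\pi A_i^{-1})^{-c_i^{(0)}/2}\le 1,
\end{equation*}
which rearranges to the claimed upper bound. Taking the supremum over $\mathbf{f}\in\mathcal{D}_{\lambda,\Lambda}(\mathbf{n},\mathbf{c}^{(0)},\mathcal{Q}^{(0)})$ finishes the proof.

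The proof has essentially no analytic obstacle remaining: all the heavy lifting is already contained in Theorem \ref{t:RegGaussianSaturation} (the Gaussian saturation for the symmetric inverse Brascamp--Lieb inequality in the regularized class) and in the two limiting lemmas. The only points requiring care are the standard bookkeeping of exponents $c_i(p)\cdot p=c_i^{(0)}+p$ under the $p$-th power, the fact that $\int f_i\in(0,\infty)$ so that no degenerate limits arise, and the edge case in which $\mathcal{D}_{\mathbf{g},\lambda,\Lambda}(\mathbf{n},\mathbf{c}^{(0)},\mathcal{Q}^{(0)})$ is empty, in which case the Gaussian supremum is $+\infty$ by convention and the theorem becomes vacuous.
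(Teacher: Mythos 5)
Your proof is correct and follows essentially the same route as the paper: apply the regularized Gaussian saturation for the data $(\mathbf n,\mathbf c(p),\mathcal Q_p)$, raise to the $p$-th power, and close the loop with Lemmas \ref{l:p->0BLUpper} and \ref{l:p->0BLLower}. The only small imprecision is the citation: when $\Lambda=\infty$, Theorem \ref{t:RegGaussianSaturation} (which requires $\Lambda<\infty$) does not apply directly and you should instead invoke Theorem \ref{t:GaussianSaturation} (together with the remark around \eqref{e:NonnegGaussSat} to allow inputs that vanish), exactly as the paper does.
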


\begin{proof}
Lemma \ref{l:p->0BLUpper} immediately yields that 
\begin{align*}
    \liminf_{p\to0} 	\textrm{I}_{\lambda,\Lambda}^{(e)}(\mathbf{n},\mathbf{c}(p),\mathcal{Q}_p)^{-p} 
&\ge
\sup_{{\bf f} \in \mathcal{D}_{\lambda, \Lambda}({\bf n}, {\bf c}^{(0)}, \mathcal{Q}^{(0)})} \prod_{i=1}^m \big( \int_{\mathbb{R}^{n_i}} f_i\, dx_i \big)^{c_i^{(0)}}
\\
&\ge
\sup_{\mathbf{A} \in \mathcal{D}_{\mathbf{g}, \lambda, \Lambda}({\bf n}, {\bf c}^{(0)}, \mathcal{Q}^{(0)}) } \prod_{i=1}^m \big( {\rm det}\, 2\pi A_i^{-1} \big)^{ \frac12 c_i^{(0)}  }. 
\end{align*}
    On the other hand, Lemma \ref{l:p->0BLLower} and Theorems \ref{t:RegGaussianSaturation} and \ref{t:GaussianSaturation} imply that 
    \begin{align*}
    \sup_{\mathbf{A} \in \mathcal{D}_{\mathbf{g}, \lambda, \Lambda}({\bf n}, {\bf c}^{(0)}, \mathcal{Q}^{(0)}) } \prod_{i=1}^m \big( {\rm det}\, 2\pi A_i^{-1} \big)^{ \frac12 c_i^{(0)}  }
    &\ge
    \limsup_{p\to0} 
	\textrm{I}_{\mathcal{G}_{\lambda,\Lambda}}( \mathbf{n},\mathbf{c}(p), \mathcal{Q}_p )^{-p} 
 \\
 &=
    \limsup_{p\to0} 
	\textrm{I}_{\lambda,\Lambda}^{(e)}( \mathbf{n},\mathbf{c}(p), \mathcal{Q}_p )^{-p} .
    \end{align*}
    Our proof is complete. 
\end{proof}

To show Theorem \ref{t:MainGaussianSaturation}, we simply apply Theorem \ref{t:RegGaussSatKW} with  $\Lambda=\infty$ and then take a limit $\lambda\to0$. 

\begin{lemma}\label{Lam->0KW}
    \begin{align*}
&\lim_{\lambda\to0}
\sup_{ {\bf f} \in \mathcal{D}_{\lambda, \infty}({\bf n}, {\bf c}^{(0)}, \mathcal{Q}^{(0)})}
\prod_{i=1}^m \big( \int_{\mathbb{R}^{n_i}} f_i\, dx_i \big)^{c_i^{(0)}}
=
\sup_{ {\bf f} \in \mathcal{D}_{LC}({\bf n}, {\bf c}^{(0)}, \mathcal{Q}^{(0)}) }
\prod_{i=1}^m \big( \int_{\mathbb{R}^{n_i}} f_i\, dx_i \big)^{c_i^{(0)}},
\end{align*}
and
\begin{align*}
&\lim_{\lambda\to0}
\sup_{ \mathbf{A} \in \mathcal{D}_{\mathbf{g}, \lambda, \infty}({\bf n}, {\bf c}^{(0)}, \mathcal{Q}^{(0)}) } \prod_{i=1}^m \big( {\rm det}\,  A_i^{-1} \big)^{ \frac12 c_i^{(0)}  }
=
\sup_{ \mathbf{A} \in \mathcal{D}_{\mathbf{g}}({\bf n}, {\bf c}^{(0)}, \mathcal{Q}^{(0)})}
\prod_{i=1}^m \big( {\rm det}\,  A_i^{-1} \big)^{ \frac12 c_i^{(0)}  }.
\end{align*}
\end{lemma}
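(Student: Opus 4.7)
The plan is to prove each equality by establishing the two inequalities: the easy direction comes from an inclusion of the feasible sets, while the nontrivial direction is proved by an explicit approximation of any element of the unrestricted class by elements of the regularized class, taking $\lambda \downarrow 0$.

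For the first equality, note that any $\mathbf{f} \in \mathcal{D}_{\lambda,\infty}(\mathbf{n},\mathbf{c}^{(0)},\mathcal{Q}^{(0)})$ is in particular log-concave, hence $\mathcal{D}_{\lambda,\infty}(\mathbf{n},\mathbf{c}^{(0)},\mathcal{Q}^{(0)}) \subset \mathcal{D}_{LC}(\mathbf{n},\mathbf{c}^{(0)},\mathcal{Q}^{(0)})$, giving $\limsup_{\lambda \to 0} \sup_{\mathcal{D}_{\lambda,\infty}} \leq \sup_{\mathcal{D}_{LC}}$. For the reverse, given any $\mathbf{f} \in \mathcal{D}_{LC}(\mathbf{n},\mathbf{c}^{(0)},\mathcal{Q}^{(0)})$, I will define the Gaussian-weighted approximants
\[
f_i^{(\lambda)}(x_i) := f_i(x_i) \, e^{-\frac{\lambda}{2}|x_i|^2}, \qquad i = 1,\ldots,m.
\]
Each $f_i^{(\lambda)}$ is even and, by the parallelogram identity applied to $|x|^2$, is $\lambda$-uniformly log-concave on $\mathrm{supp}(f_i)$. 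Moreover,
\[
\prod_{i=1}^m f_i^{(\lambda)}(x_i)^{c_i^{(0)}} = e^{-\frac{\lambda}{2}\sum_{i} c_i^{(0)}|x_i|^2} \prod_{i=1}^m f_i(x_i)^{c_i^{(0)}} \leq e^{-\langle x, \mathcal{Q}^{(0)} x\rangle},
\]
since the extra exponent is nonpositive; hence $\mathbf{f}^{(\lambda)} \in \mathcal{D}_{\lambda,\infty}(\mathbf{n},\mathbf{c}^{(0)},\mathcal{Q}^{(0)})$ (recall that the $\Lambda=\infty$ convention permits nontrivial supports). As $\lambda \downarrow 0$, $f_i^{(\lambda)}$ increases pointwise to $f_i$, so by monotone convergence $\int f_i^{(\lambda)} \to \int f_i$, yielding
\[
\liminf_{\lambda \to 0} \sup_{\mathbf{f} \in \mathcal{D}_{\lambda,\infty}} \prod_{i=1}^m \Bigl(\int f_i\,dx_i\Bigr)^{c_i^{(0)}} \geq \prod_{i=1}^m \Bigl(\int f_i\,dx_i\Bigr)^{c_i^{(0)}}
\]
for every $\mathbf{f} \in \mathcal{D}_{LC}$; taking the supremum over $\mathbf{f}$ closes the loop.

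For the second equality, the same inclusion $\mathcal{D}_{\mathbf{g},\lambda,\infty} \subset \mathcal{D}_{\mathbf{g}}$ gives the upper bound. For the matching lower bound, given $\mathbf{A} \in \mathcal{D}_{\mathbf{g}}(\mathbf{n},\mathbf{c}^{(0)},\mathcal{Q}^{(0)})$, I will set $A_i^{(\lambda)} := A_i + \lambda\,\mathrm{id}_{n_i}$. Then $A_i^{(\lambda)} \geq \lambda\,\mathrm{id}_{n_i}$, and
\[
\sum_{i=1}^m c_i^{(0)} P_i^* A_i^{(\lambda)} P_i - 2\mathcal{Q}^{(0)} = \Bigl(\sum_{i=1}^m c_i^{(0)} P_i^* A_i P_i - 2\mathcal{Q}^{(0)}\Bigr) + \lambda \sum_{i=1}^m c_i^{(0)} P_i^* P_i \geq 0,
\]
so $\mathbf{A}^{(\lambda)} \in \mathcal{D}_{\mathbf{g},\lambda,\infty}(\mathbf{n},\mathbf{c}^{(0)},\mathcal{Q}^{(0)})$. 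Continuity of the determinant gives $\det A_i^{(\lambda)} \to \det A_i$ as $\lambda \to 0$, and the same limiting argument as above concludes the proof.

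Neither direction presents a real obstacle; the mild point to verify is that the weighting $e^{-\lambda|x|^2/2}$ both earns uniform log-concavity and preserves the duality relation, which works precisely because all $c_i^{(0)}$ are positive so that the extra Gaussian term contributes nonnegatively to the quadratic form on the right-hand side of \eqref{e:GeneDual}.
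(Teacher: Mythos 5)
Your proof is correct and follows essentially the same route as the paper: regularize by the Gaussian weight $f_i^{(\lambda)} := f_i\,e^{-\frac{\lambda}{2}|x_i|^2}$ (equivalently, $A_i^{(\lambda)} := A_i + \lambda\,\mathrm{id}_{n_i}$ in the Gaussian case) and pass to the limit $\lambda \to 0$ via monotone convergence, with the easy direction handled by the inclusion of feasibility sets. You spell out some details the paper leaves implicit — notably that the duality constraint is preserved under the weighting precisely because all $c_i^{(0)} > 0$, and the explicit handling of the Gaussian case — but the underlying argument is identical.
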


\begin{proof}
Since the argument is completely the same, we only show the first identity. 
Let us take integrable, even log-concave $ f_i$ satisfying  $\prod_{i=1}^m f_i(x_i)^{ c_i^{(0)} } \le e^{ - \langle x, \mathcal{Q}^{(0)} x\rangle } $. 
Put $ f_i^{(\lambda)} := e^{-\frac{\lambda}2|x_i|^2} f_i $, then $f_i^{(\lambda)}$ is integrable, even and $\lambda$-uniformly log-concave. 
Hence by using the monotone convergence theorem, 
\begin{align*}
\sup_{ {\bf f} \in \mathcal{D}_{LC}({\bf n}, {\bf c}^{(0)}, \mathcal{Q}^{(0)})  }
\prod_{i=1}^m \big( \int_{\mathbb{R}^{n_i}} f_i\, dx_i \big)^{c_i^{(0)}}
\le 
\lim_{\lambda\to0}
\sup_{ {\bf f} \in \mathcal{D}_{\lambda, \infty}({\bf n}, {\bf c}^{(0)}, \mathcal{Q}^{(0)}) }
\prod_{i=1}^m \big( \int_{\mathbb{R}^{n_i}} f_i\, dx_i \big)^{c_i^{(0)}}. 
\end{align*}
The reverse direction is trivial. 
\end{proof}

\begin{proposition}
    \label{p:MainGaussianSaturationLC}
    For any ${\bf f} \in \mathcal{D}_{LC}({\bf n}, {\bf c}^{(0)}, \mathcal{Q}^{(0)})$, it holds that 
    \begin{equation*}
        \prod_{i=1}^m \big(\int_{\mathbb{R}^{n_i}} f_i\, dx_i \big)^{c_i^{(0)}}
        \le
       \sup_{  \mathbf{A} \in \mathcal{D}_{\mathbf{g}}({\bf n}, {\bf c}^{(0)}, \mathcal{Q}^{(0)})}
\prod_{i=1}^m \big( {\rm det}\, 2\pi A_i^{-1} \big)^{ \frac12 c_i^{(0)}  }. 
    \end{equation*}
\end{proposition}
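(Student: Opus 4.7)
The plan is to reduce this proposition directly to the regularized version already established in Theorem \ref{t:RegGaussSatKW}, and then pass to the limit as the log-concavity regularization parameter $\lambda$ tends to $0$. Since the right-hand side of the statement is a supremum, any individual $\mathbf{f} \in \mathcal{D}_{LC}(\mathbf{n},\mathbf{c}^{(0)},\mathcal{Q}^{(0)})$ needs only to be bounded by the full Gaussian supremum, so it suffices to establish the corresponding inequality between suprema:
\begin{equation*}
\sup_{\mathbf{f} \in \mathcal{D}_{LC}(\mathbf{n},\mathbf{c}^{(0)},\mathcal{Q}^{(0)})} \prod_{i=1}^m \Bigl(\int_{\mathbb{R}^{n_i}} f_i\, dx_i\Bigr)^{c_i^{(0)}} \le \sup_{\mathbf{A} \in \mathcal{D}_{\mathbf{g}}(\mathbf{n},\mathbf{c}^{(0)},\mathcal{Q}^{(0)})} \prod_{i=1}^m \bigl({\rm det}\, 2\pi A_i^{-1}\bigr)^{\frac{1}{2} c_i^{(0)}}.
\end{equation*}

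First, I would invoke Theorem \ref{t:RegGaussSatKW} with $\Lambda = \infty$ (which is allowed since the statement and its proof explicitly permit this case) and any fixed $\lambda > 0$. This yields the exact identity
\begin{equation*}
\sup_{\mathbf{f} \in \mathcal{D}_{\lambda,\infty}(\mathbf{n},\mathbf{c}^{(0)},\mathcal{Q}^{(0)})} \prod_{i=1}^m \Bigl(\int_{\mathbb{R}^{n_i}} f_i\, dx_i\Bigr)^{c_i^{(0)}} = \sup_{\mathbf{A} \in \mathcal{D}_{\mathbf{g},\lambda,\infty}(\mathbf{n},\mathbf{c}^{(0)},\mathcal{Q}^{(0)})} \prod_{i=1}^m \bigl({\rm det}\, 2\pi A_i^{-1}\bigr)^{\frac{1}{2} c_i^{(0)}}.
\end{equation*}
This identifies, for each fixed $\lambda>0$, the best functional constant under $\lambda$-uniform log-concavity with the best Gaussian constant under the eigenvalue restriction $A_i \ge \lambda\,\mathrm{id}_{n_i}$.

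Next, I would let $\lambda \to 0^+$ on both sides. The left-hand side converges to the supremum over $\mathcal{D}_{LC}(\mathbf{n},\mathbf{c}^{(0)},\mathcal{Q}^{(0)})$ by the first identity of Lemma \ref{Lam->0KW}, while the right-hand side converges to the supremum over $\mathcal{D}_{\mathbf{g}}(\mathbf{n},\mathbf{c}^{(0)},\mathcal{Q}^{(0)})$ by the second identity of that same lemma (up to the harmless factor of $2\pi$ inside the determinant, which commutes with the $\lambda\to 0$ limit). Combining the two limits gives precisely the desired inequality between suprema, and hence the bound for any individual $\mathbf{f} \in \mathcal{D}_{LC}(\mathbf{n},\mathbf{c}^{(0)},\mathcal{Q}^{(0)})$.

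There is no real obstacle here, since the heavy lifting has been carried out upstream: Theorem \ref{t:RegGaussSatKW} (which rests on Theorem \ref{t:MainIBL}) already performs the functional-to-Gaussian reduction in the regularized setting, and Lemma \ref{Lam->0KW} handles the removal of the regularization on both sides simultaneously. The only small point worth verifying is that the monotone-convergence argument inside Lemma \ref{Lam->0KW} is applied consistently — that the perturbed functions $f_i^{(\lambda)} = e^{-\frac{\lambda}{2}|x_i|^2} f_i$ still satisfy the duality relation \eqref{e:GeneDual}, which is immediate because multiplying each $f_i$ by the Gaussian factor only decreases the left-hand side of \eqref{e:GeneDual}. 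With that sanity check in place, the chain of equalities and limits closes the proof.
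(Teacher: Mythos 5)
Your proof is correct and is essentially identical to the paper's own argument: the paper also combines Theorem \ref{t:RegGaussSatKW} with $\Lambda=\infty$ and the two identities in Lemma \ref{Lam->0KW} to take $\lambda\to0$ on both sides. Your added sanity check, that multiplying each $f_i$ by $e^{-\frac{\lambda}{2}|x_i|^2}$ preserves the duality constraint \eqref{e:GeneDual}, is also accurate and is precisely what makes Lemma \ref{Lam->0KW} go through.
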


\begin{proof}
    Combining Lemma \ref{Lam->0KW} and Theorem \ref{t:RegGaussSatKW}, we see that 
    \begin{align*}
    &
    \sup_{ {\bf f} \in \mathcal{D}_{LC}({\bf n}, {\bf c}^{(0)}, \mathcal{Q}^{(0)})  }
\prod_{i=1}^m \big( \int_{\mathbb{R}^{n_i}} f_i\, dx_i \big)^{c_i^{(0)}}
\\
        &=\lim_{\lambda\to0}
\sup_{ {\bf f} \in \mathcal{D}_{\lambda, \infty}({\bf n}, {\bf c}^{(0)}, \mathcal{Q}^{(0)})  }
\prod_{i=1}^m \big( \int_{\mathbb{R}^{n_i}} f_i\, dx_i \big)^{c_i^{(0)}}
\\
&=
\lim_{\lambda\to0}
\sup_{  \mathbf{A} \in \mathcal{D}_{\mathbf{g}, \lambda, \infty}({\bf n}, {\bf c}^{(0)}, \mathcal{Q}^{(0)}) } \prod_{i=1}^m \big( {\rm det}\, 2\pi A_i^{-1} \big)^{ \frac12 c_i^{(0)}  }
\\
&=
\sup_{  \mathbf{A} \in \mathcal{D}_{\mathbf{g}}({\bf n}, {\bf c}^{(0)}, \mathcal{Q}^{(0)})}
\prod_{i=1}^m \big( {\rm det}\, 2\pi A_i^{-1} \big)^{ \frac12 c_i^{(0)}  }.
    \end{align*}
\end{proof}

Finally let us relax the condition of the log-concavity in Proposition \ref{p:MainGaussianSaturationLC} to complete our argument. 
At this stage, there is no reason to specify $c_i^{(0)}$ and $\mathcal{Q}^{(0)}$, and so we use $c_i$ and $\mathcal{Q}$ below. 

\begin{proof}[Proof of Theorem \ref{t:MainGaussianSaturation}]
Let us denote $\mathcal{Q}=(\mathcal{Q}_{ij})_{1\le, i,j \le m}$. We may suppose that $\mathcal{Q}_{ii}\ge0$ for all $i=1, \dots, m$. 
Otherwise there is nothing to prove since 
$$
\sup_{  \mathbf{A} \in \mathcal{D}_{\mathbf{g}}({\bf n}, {\bf c}, \mathcal{Q})}
\prod_{i=1}^m \big( {\rm det}\, 2\pi A_i^{-1} \big)^{ \frac12 c_i  }=+\infty.
$$

Let $f_i \in L^1(\mathbb{R}^{n_i})$ be a nonnegative even function such that $
    \prod_{i=1}^m f_i(x_i)^{ c_i}  \le e^{ - \langle x, \mathcal{Q} x\rangle }.
    $
Let us define functions $F_i \in L^1(\R^{n_i})$ for $i =1, \dots, m$ by induction. 
Put
$$
F_1(x_1)^{c_1} 
\coloneqq 
\inf_{ \substack{x_i \in \R^{n_i} \\ i=2, \dots, m}} 
\frac{e^{- \langle x, \mathcal{Q}x \rangle}
}{
\prod_{i=2}^m f_i(x_i)^{c_i}
}. 
$$
When $F_1, \dots, F_k$ is defined, $F_{k+1}$ is defined as 
$$
F_{k+1}(x_{k+1})^{c_{k+1}} 
\coloneqq 
\inf_{ \substack{x_i \in \R^{n_i} \\ i \in [m] \setminus \{k\}}} 
\frac{e^{- \langle x, \mathcal{Q}x \rangle}
}{
\prod_{i=1}^{k} F_i(x_i)^{c_i} \prod_{i=k+2}^m f_i(x_i)^{c_i}
}. 
$$
Then by definition, for all $i=1, \dots, m$, it holds that $f_i \le F_i$ and 
\begin{equation*}
\prod_{i=1}^m F_i(x_i)^{c_i} \le e^{- \langle x, \mathcal{Q} x \rangle}. 
\end{equation*}
Moreover we may easily check that $F_i$ is even. 
Finally since $\mathcal{Q}_{ii}\ge0$ for all $i$, $F_i$ is log-concave. 
Moreover, by multiplying ${e^{-\varepsilon|x_i|^2}}$ and taking $\varepsilon\to0$ in the end if necessary, we may suppose that $F_i$ is integrable. 
Thus applying Proposition \ref{p:MainGaussianSaturationLC}, we obtain that 
$$
\prod_{i=1}^m \big(\int_{\mathbb{R}^{n_i}} F_i\, dx_i \big)^{c_i}
        \le
       \sup_{  \mathbf{A} \in \mathcal{D}_{\mathbf{g}}({\bf n}, {\bf c}, \mathcal{Q})}
\prod_{i=1}^m \big( {\rm det}\, 2\pi A_i^{-1} \big)^{ \frac12 c_i  }. 
$$
Finally since $f_i \le F_i$, we conclude the desired assertion. 
\if0 
Now put $F_i^{(r)} \coloneqq F_i  \mathbf{1}_{[-r,r]^{n_i}}$ for $r>0$, then $F_i^{(r)}$ is integrable. Thus from above, it holds that $F_i^{(r)} \in \mathcal{F}_{LC}^{(e)}(\R^{n_i})$ and 
$$
\prod_{i=1}^m F_i^{(r)}(x_i)^{c_i} \le e^{- \langle x, \mathcal{Q} x \rangle}. 
$$
Thus applying Proposition \ref{p:MainGaussianSaturationLC}, we obtain 
$$
\prod_{i=1}^m \big(\int_{\mathbb{R}^{n_i}} F_i^{(r)}\, dx_i \big)^{c_i}
        \le
       \sup_{  \mathbf{A} \in \mathcal{D}_{\mathbf{g}}({\bf n}, {\bf c}, \mathcal{Q})}
\prod_{i=1}^m \big( {\rm det}\, 2\pi A_i^{-1} \big)^{ \frac12 c_i  }. 
$$
Letting $r\to\infty$, the monotone convergence theorem yields 
$$
\prod_{i=1}^m \big(\int_{\mathbb{R}^{n_i}} F_i\, dx_i \big)^{c_i}
        \le
       \sup_{  \mathbf{A} \in \mathcal{D}_{\mathbf{g}}({\bf n}, {\bf c}, \mathcal{Q})}
\prod_{i=1}^m \big( {\rm det}\, 2\pi A_i^{-1} \big)^{ \frac12 c_i  }. 
$$
Finally since $f_i \le F_i$, we conclude the desired assertion. 
\fi 
\end{proof}

\section{Applications}\label{Section4}
\subsection{Applications to convex geometry}\label{Section4.1}
As we have explained in introduction, Theorem \ref{t:MainGaussianSaturation} together with the datum \eqref{e:BSData} rederives the functional Blaschke--Santal\'{o} inequality \eqref{e:FBS}. 
By considering the datum \eqref{e:KWData}, the problem of the conjecture of Kolesnikov--Werner is reduced to the finite dimensional problem, that is to compute the Gaussian constant $\sup_{\mathbf{A} \in \mathcal{D}_{\mathbf{g}}(\mathbf{n},\mathbf{c},\mathcal{Q})} \prod_{i=1}^m {\rm det}\, A_i^{-1}$. 
Although this finite dimensional problem requires a substantial work for $m\ge3$, we manage to give the affirmative answer to the conjecture of Kolesnikov--Werner. 
\begin{theorem}\label{t:KW}
    Let $n\in\mathbb{N}$ and $m\ge2$. 
    For any nonnegative and even $f_i \in L^1(\mathbb{R}^n)$ satisfying 
    $$
    \prod_{i=1}^m f_i(x_i)
    \le 
    e^{-\frac{1}{m-1} \sum_{i<j} \langle x_i,x_j\rangle},
    \quad (x_1,\ldots,x_m) \in (\mathbb{R}^n )^m, 
    $$
    it holds that 
    \begin{equation}\label{e:KWInequality}
    \prod_{i=1}^m \int_{\mathbb{R}^n} f_i\, dx_i \le 
    \big( \int_{\mathbb{R}^n} e^{-\frac12 |x|^2}\, dx \big)^m
    = 
    (2\pi)^{\frac{nm}2}. 
    \end{equation}
\end{theorem}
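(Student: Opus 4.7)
The plan is first to invoke Theorem \ref{t:MainGaussianSaturation} with the Brascamp--Lieb datum \eqref{e:KWData}, which instantly reduces \eqref{e:KWInequality} to the finite-dimensional Gaussian maximization problem
\[
\prod_{i=1}^m \det A_i \ge 1 \quad \text{whenever } \Sigma(\mathbf{A}) := \mathrm{diag}(A_1,\ldots,A_m) - 2\mathcal{Q} \ge 0,
\]
i.e., whenever the $m \times m$ block matrix with $A_i$ on the $(i,i)$-block and $-\tfrac{1}{m-1} I_n$ on each off-diagonal block is positive semidefinite. The goal of the remainder of the proof is to establish this purely matrix-analytic statement.

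The first step is to recast the PSD condition. Using $2\sum_{i<j}\langle x_i,x_j\rangle = |\sum_i x_i|^2 - \sum_i |x_i|^2$ and the substitution $y_i = B_i^{1/2} x_i$ with $B_i := A_i + \tfrac{1}{m-1} I_n$, a short manipulation of the quadratic form shows
\[
\Sigma(\mathbf{A}) \ge 0 \iff \sum_{i=1}^m B_i^{-1} \le (m-1) I_n.
\]
Setting $E_i := \tfrac{1}{m-1} B_i^{-1}$ (so $0 < E_i < I_n$ and $\sum_i E_i \le I_n$) and using $A_i = (m-1)^{-1}(E_i^{-1} - I_n)$, one computes
\[
\prod_{i=1}^m \det A_i = (m-1)^{-mn}\prod_{i=1}^m \frac{\det(I_n - E_i)}{\det E_i},
\]
so the task becomes $\prod_{i=1}^m \det(I_n - E_i)/\det E_i \ge (m-1)^{mn}$.

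The second step uses Minkowski's determinant inequality to decouple the matrix-valued data into scalars. Since $\sum_j E_j \le I_n$, one has $I_n - E_i \ge \sum_{j\ne i} E_j$, whence $\det(I_n - E_i) \ge \det(\sum_{j\ne i} E_j)$, and Minkowski gives $\det(\sum_{j\ne i} E_j)^{1/n} \ge \sum_{j\ne i} (\det E_j)^{1/n}$. Writing $d_j := (\det E_j)^{1/n} > 0$, the problem reduces to the scalar inequality
\[
\prod_{i=1}^m \frac{\sum_{j\ne i} d_j}{d_i} \ge (m-1)^m,
\]
which follows directly from AM-GM: $\sum_{j\ne i} d_j \ge (m-1)(\prod_{j\ne i} d_j)^{1/(m-1)}$, and taking the product over $i$ while using $\prod_i \prod_{j\ne i} d_j = (\prod_j d_j)^{m-1}$ yields the bound.

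The main obstacle is the non-commutativity of the $A_i$'s for $n \ge 2$: a priori it is unclear how the coupled block PSD condition $\Sigma(\mathbf{A}) \ge 0$ should force the determinantal lower bound, and the naive Schur-complement or Hadamard--Fischer bounds go in the wrong direction. The key insight that circumvents this is the substitution $E_i = \tfrac{1}{m-1} B_i^{-1}$, which converts the coupled block constraint into the simple sum constraint $\sum_i E_i \le I_n$; after this reformulation, Minkowski's determinant inequality cleanly collapses the matrix problem to a one-dimensional AM-GM computation, with equality exactly when all $E_i = \tfrac{1}{m} I_n$, corresponding to $A_i = I_n$ (i.e., the standard Gaussian saturates).
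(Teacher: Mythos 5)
Your reduction via Theorem~\ref{t:MainGaussianSaturation} is the same as the paper's; however your solution of the resulting Gaussian maximization problem \eqref{e:MaximizeGaussian-KW} is genuinely different and considerably shorter. I verified your argument in detail and it is correct. The key identity $\Sigma(\mathbf{A})\ge 0\iff \sum_i B_i^{-1}\le (m-1)\,{\rm id}_n$ with $B_i=A_i+\tfrac{1}{m-1}{\rm id}_n$ holds: from $2\sum_{i<j}\langle x_i,x_j\rangle=|\sum_i x_i|^2-\sum_i|x_i|^2$ one rewrites the quadratic form as $\sum_i\langle x_i,B_ix_i\rangle-\tfrac{1}{m-1}|\sum_i x_i|^2$, and after $y_i=B_i^{1/2}x_i$ the operator $T(y)=\sum_i B_i^{-1/2}y_i$ satisfies $TT^*=\sum_i B_i^{-1}$, giving $\|T\|^2\le m-1$ as the PSD condition. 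With $E_i=\tfrac{1}{m-1}B_i^{-1}$ one indeed has $\det A_i=(m-1)^{-n}\det(I-E_i)/\det E_i$, and the chain $\det(I-E_i)\ge\det(\sum_{j\ne i}E_j)\ge(\sum_{j\ne i}(\det E_j)^{1/n})^n$ followed by the AM--GM telescoping gives $\prod_i\det A_i\ge 1$.

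The paper takes a much heavier route: it first establishes existence of an extremizer for the entropic reformulation \eqref{e:MinimizeGaussian-Tal} (requiring the lower bound \eqref{e:LowerBoundPhi} and a compactness argument), then derives first-order optimality conditions via the Wasserstein barycenter and the dual multimarginal Kantorovich problem (Lemma~\ref{l:GaussianStep1}), and finally solves a nontrivial spectral linear-algebra problem (Proposition~\ref{t:GaussianStep2}) to conclude $A_i={\rm id}_n$. Your change of variables $E_i=\tfrac1{m-1}(A_i+\tfrac1{m-1}{\rm id}_n)^{-1}$ linearizes the coupled block-PSD constraint into $\sum_i E_i\le {\rm id}_n$, after which Minkowski's determinant inequality and AM--GM close the argument; you bypass both the extremizer-existence step and the optimal-transport machinery entirely. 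What each buys: the paper's route yields extra structural information (the barycenter, the optimal maps, the characterization of the extremizing Gaussians), which is useful for the applications in Section~\ref{Section4}; your route gives the sharp numerical constant with minimal overhead and would be a welcome simplification for the proof of Theorem~\ref{t:KW} itself. One small caveat: your closing remark that equality holds exactly when $E_i=\tfrac1m{\rm id}_n$ is false for $m=2$, where any pair $A_2=A_1^{-1}$ is extremal (as the paper notes, $m=2$ retains a $GL$-invariance); this does not affect the inequality, but you should either restrict that remark to $m\ge 3$ or delete it.
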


\begin{remark}
    If one looks at \cite[Proposition 2.1]{KW}, one may realize that the iterative applications of the classical Blaschke--Santal\'{o} inequality implies the upper bound $\prod_{i=1}^m \int_{\mathbb{R}^n_i} f_i\, dx_i \le ( 2\pi (m-1) )^{\frac{mn}2} $. 
    The inequality \eqref{e:KWInequality} improves this trivial inequality by removing the factor $(m-1)^{\frac{mn}2}$. 
\end{remark}

Regarding the proof of this theorem, the main task is the calculation of the Gaussian constant $\sup_{\mathbf{A} \in \mathcal{D}_{\mathbf{g}}(\mathbf{n},\mathbf{c},\mathcal{Q})} \prod_{i=1}^m {\rm det}\, A_i^{-1}$, and this is no longer trivial for $m\ge3$.
We have already seen in the introduction that it is maximized by $A_i = {\rm id}_n$ when $m=3$ thanks to the work of Kalantzopoulos--Saroglou \cite{KS}.
We postpone the proof of the same fact for $m\ge4$ in the next section, and proceed to exhibit further consequences. 
As in \cite[Theorem 5.1]{KW}, by taking $f_i(x_i) = e^{-\frac12\| x_i\|_{K_i}^2}$ for symmetric convex bodies $K_1,\ldots,K_m \subset \mathbb{R}^n$, we may derive the following. 
\begin{corollary}\label{Cor:SetKW}
    Let $n\in \mathbb{N}$ and $m\ge2$. 
    For symmetric convex bodies $K_1,\ldots,K_m \subset \mathbb{R}^n$ satisfying 
    $$
    \sum_{i<j} \langle x_i,x_j\rangle \le \frac{m-1}2 \sum_{i=1}^m \|x_i\|_{K_i}^2,\quad (x_1,\ldots,x_m) \in (\mathbb{R}^n)^m, 
    $$
   it holds that 
   $$
   \prod_{i=1}^m |K_i| \le |\mathbf{B}^n_2|^m.
   $$
\end{corollary}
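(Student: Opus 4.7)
The plan is to apply Theorem~\ref{t:KW} to the Gaussian-like functions built from the Minkowski functionals of the $K_i$, and then translate the resulting functional inequality back into a volume inequality using the identities recorded in \eqref{e:Func->Geo}. This is precisely the functional-to-geometric passage mentioned after \eqref{e:FBS}, adapted to the multi-body setting.

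First I would set $f_i(x_i) := e^{-\frac12\|x_i\|_{K_i}^2}$ for $i=1,\dots,m$. Because each $K_i$ is a symmetric convex body, $\|\cdot\|_{K_i}$ is a norm on $\R^n$, so each $f_i$ is nonnegative, even, and belongs to $L^1(\R^n)$. Next I would check that the hypothesis of Theorem~\ref{t:KW} is satisfied: the inequality
\[
\prod_{i=1}^m f_i(x_i) = \exp\Bigl(-\tfrac12\sum_{i=1}^m\|x_i\|_{K_i}^2\Bigr)\;\le\;\exp\Bigl(-\tfrac{1}{m-1}\sum_{i<j}\langle x_i,x_j\rangle\Bigr)
\]
is equivalent, after taking logarithms and multiplying by $-2(m-1)$, to the assumed bound $\sum_{i<j}\langle x_i,x_j\rangle\le \tfrac{m-1}{2}\sum_i\|x_i\|_{K_i}^2$. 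Thus Theorem~\ref{t:KW} applies and gives $\prod_{i=1}^m \int_{\R^n} f_i\,dx_i \le (2\pi)^{nm/2}$.

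To finish, I would invoke the first identity in \eqref{e:Func->Geo}, namely $\int_{\R^n} e^{-\frac12\|x\|_{K_i}^2}\,dx = \frac{(2\pi)^{n/2}}{|\mathbf{B}^n_2|}\,|K_i|$, and take the product over $i=1,\dots,m$. Substituting into the inequality obtained from Theorem~\ref{t:KW} yields
\[
\frac{(2\pi)^{nm/2}}{|\mathbf{B}^n_2|^m}\prod_{i=1}^m |K_i| \;\le\; (2\pi)^{nm/2},
\]
which rearranges to $\prod_{i=1}^m |K_i| \le |\mathbf{B}^n_2|^m$, as desired.

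I do not foresee any genuine obstacle: all the work has been loaded into Theorem~\ref{t:KW}, and the passage to convex bodies is a one-line substitution using the well-known Gaussian integral of $e^{-\frac12\|\cdot\|_K^2}$. The only point worth double-checking is the direction of the defining inequality for $f_i$ versus the hypothesis on the $K_i$, which matches exactly.
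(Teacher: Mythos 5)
Your proof is correct and follows exactly the same route as the paper, which simply invokes Theorem~\ref{t:KW} with $f_i(x_i)=e^{-\frac12\|x_i\|_{K_i}^2}$ and the Gaussian-integral identity from \eqref{e:Func->Geo}. Your explicit verification of the hypothesis (logarithm, multiply by $-2(m-1)$) is the minor bookkeeping the paper leaves implicit.
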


Next let us recall the definition of the $\lambda$-affine surface area for $\lambda \in \mathbb{R}$ of a convex function, which was introduced in \cite{CFGLSW}. 
Given a convex function $V \colon \mathbb{R}^n \to \mathbb{R} \cup \{\infty\}$, the $\lambda$-affine surface area is defined as
$$
as_\lambda(V) 
\coloneqq
\int_{\Omega_V} e^{(2\lambda -1) V(x) - \lambda \langle x, \nabla V(x) \rangle} ( {\rm det}\, D^2 V(x) )^\lambda\, dx, 
$$
where $\Omega_V \coloneqq {\rm int}\, ( \{ x \in \mathbb{R}^n \, :\, V(x) < +\infty\})$, and $D^2 V$ is the Hessian of $V$ in the sense of Alexandrov \cite{Alex} and Busemann--Feller \cite{BusFe} which exists almost everywhere in $\Omega_V$. 
Especially if $V$ is twice differentiable at $x \in \Omega_V$, then $D^2V(x) = \nabla^2 V(x)$. 
For properties for the $\lambda$-affine surface area, see \cite{CFGLSW}. 
The following result extends the $\lambda$-affine isoperimetric inequality due to \cite{CFGLSW} to multiple functions. 
Note that when the input functions are all unconditional, such an extension of $\lambda$-affine isoperimetric inequality  has been proved in \cite{KW}.

\begin{theorem}\label{t:LambdaAffineIso}
    Let $n \in \mathbb{N}$ and $m \ge 2$. 
    For any even and convex function $V_i \colon \mathbb{R}^n \to \R \cup \{\infty\}$, $i=1, \dots, m$, satisfying 
    $$
    \sum_{i=1}^m V_i(x_i) \ge \frac1{m-1} \sum_{i<j} \langle x_i, x_j \rangle,\;\;\; (x_1, \dots, x_m) \in (\mathbb{R}^n)^m, 
    $$
    it holds that 
    $$
    \prod_{i=1}^m as_\lambda(V_i) \le as_\lambda(\frac12 |\cdot|^2)^m = (2\pi)^{\frac{nm}{2}}, \;\;\; \forall \lambda \in [0, \frac12]. 
    $$
    Moreover for any even and convex function $V_i \colon \mathbb{R}^n \to \R \cup \{\infty\}$, $i=1, \dots, m$, satisfying 
    $$
    \sum_{i=1}^m V_i^*(x_i) \ge \frac1{m-1} \sum_{i<j} \langle x_i, x_j \rangle,\;\;\; (x_1, \dots, x_m) \in (\mathbb{R}^n)^m, 
    $$
    it holds that 
    $$
    \prod_{i=1}^m as_\lambda(V_i) \le as_\lambda(\frac12 |\cdot|^2)^m = (2\pi)^{\frac{nm}{2}}, \;\;\; \forall \lambda \in [\frac12,1]. 
    $$
\end{theorem}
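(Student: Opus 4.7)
The plan is to interpolate the $\lambda$-affine surface area between its $\lambda=0$ and $\lambda=1$ endpoints via H\"older's inequality, and then to bound the product of the $\lambda=0$ terms using Theorem \ref{t:KW} and the individual $\lambda=1$ terms using the classical ($m=2$) functional Blaschke--Santal\'{o} inequality \eqref{e:FBS}.

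First I would use the identity $\langle x,\nabla V(x)\rangle = V(x)+V^{*}(\nabla V(x))$ to rewrite
\[
as_\lambda(V) \;=\; \int_{\Omega_V} \bigl(e^{-V(x)}\bigr)^{1-\lambda}\bigl(e^{-V^{*}(\nabla V(x))}\det D^2V(x)\bigr)^{\lambda}\,dx,
\]
and then apply H\"older's inequality with conjugate exponents $1/(1-\lambda)$ and $1/\lambda$ (for $\lambda\in(0,1)$) to get $as_\lambda(V)\le as_0(V)^{1-\lambda} as_1(V)^{\lambda}$, where $as_1(V)=\int e^{-V^{*}}dy$ is obtained by the substitution $y=\nabla V(x)$. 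Since each $V_i$ is even, \eqref{e:FBS} yields $as_0(V_i)\,as_1(V_i)\le (2\pi)^n$, so
\[
as_\lambda(V_i) \;\le\; as_0(V_i)^{1-2\lambda}\bigl(as_0(V_i)\,as_1(V_i)\bigr)^{\lambda} \;\le\; as_0(V_i)^{1-2\lambda}\,(2\pi)^{n\lambda}.
\]
For $\lambda\in[0,1/2]$ the exponent $1-2\lambda$ is nonnegative, hence taking the product over $i$ and applying Theorem \ref{t:KW} to the even functions $f_i:=e^{-V_i}$ (which satisfy \eqref{e:KWDuality} precisely by the hypothesis on $(V_i)$) gives
\[
\prod_{i=1}^m as_\lambda(V_i) \;\le\; \Bigl(\prod_i as_0(V_i)\Bigr)^{1-2\lambda}(2\pi)^{nm\lambda} \;\le\; (2\pi)^{nm(1-2\lambda)/2}(2\pi)^{nm\lambda} \;=\; (2\pi)^{nm/2},
\]
proving the first assertion (the endpoints $\lambda=0$ and $\lambda=1/2$ follow directly or by continuity).

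For the second assertion I would invoke the duality identity $as_\lambda(V)=as_{1-\lambda}(V^{*})$, derived via the change of variables $y=\nabla V(x)$ together with $D^2V^{*}(y)=(D^2V(x))^{-1}$. For $\lambda\in[1/2,1]$ we have $1-\lambda\in[0,1/2]$, and the $V_i^{*}$ are even convex functions fulfilling the hypothesis of the first assertion by assumption, so applying the first assertion to $(V_i^{*},1-\lambda)$ delivers the result. The only genuinely nontrivial ingredient is choosing the correct H\"older pairing: the grouping $(e^{-V})^{1-\lambda}\cdot(e^{-V^{*}(\nabla V)}\det D^2V)^{\lambda}$ is engineered so that the $\det D^2V$ factor is exactly absorbed by the Jacobian of $y=\nabla V(x)$, and the two endpoint quantities $\int e^{-V}$ and $\int e^{-V^{*}}$ emerge in the correct proportions $(1-2\lambda)$ and $\lambda$ to match Theorem \ref{t:KW} and Blaschke--Santal\'{o} respectively. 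Standard care is needed concerning integrability and the almost-everywhere sense of $D^2V$ via Alexandrov's theorem, but no new analytic difficulty arises beyond what is already handled in \cite{CFGLSW}.
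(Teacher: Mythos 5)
Your proof is correct and follows the same route the paper has in mind when it defers to ``adapting the argument in \cite{KW}''. That argument rests on precisely the ingredients you identify: the H\"older interpolation $as_\lambda(V)\le as_0(V)^{1-\lambda}as_1(V)^{\lambda}$ (from \cite{CFGLSW}), the classical Blaschke--Santal\'{o} inequality to control each $as_0(V_i)\,as_1(V_i)$, Theorem \ref{t:KW} to bound $\prod_i as_0(V_i)$, and the duality $as_\lambda(V)=as_{1-\lambda}(V^*)$ to reduce the range $\lambda\in[\tfrac12,1]$ to the first case.
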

By adapting the argument in \cite{KW}, this result follows from Theorem \ref{t:KW}, so we omit the proof. 
Furthermore when we take $V= \frac12 \| \cdot \|_K^2$ for a symmetric convex body $K \subset \mathbb{R}^n$, it is known in \cite[Theorem 3]{CFGLSW} that for $p \ge 0$, it holds that 
$$
as_\lambda ( \frac12 \| \cdot \|_{K}^2) = \frac{(2\pi)^{\frac n2}}{n | {\bf B}_2^n|} as_p(K), \;\;\; \lambda = \frac{p}{n+p}. 
$$
Here $as_p(K)$ is the $L_p$-affine surface area of a convex body $K \subset \mathbb{R}^n$ given as 
\begin{equation}\label{e:AffineGeo}
    as_p(K)
\coloneqq
\int_{\partial K} \frac{\kappa_K(x)^{\frac{p}{n+p}}}{\langle x, N_K(x) \rangle^{\frac{n(p-1)}{n+p}}}\, d\mu_K(x), 
\end{equation}
where $N_K(x)$ is the outer unit normal vector at $x \in \partial K$, $\mu_K$ is the surface area measure on $\partial K$ and $\kappa_K(x)$ is the Gauss curvature at $x \in \partial K$. 
The $L_p$-affine surface area has originated in \cite{Hug, Lutwak, SchWerAdv}. Especially when $p=1$, $as_1(K)$ is the classical affine surface area. We refer to \cite{SchWerArXiv} and references therein for geometric interpretations and some properties of the affine surface area. 
The following is an immediate conclusion derived from Theorem \ref{t:LambdaAffineIso} by combining with \eqref{e:AffineGeo}. 

\begin{corollary}\label{Cor:AffineIso}
    Let $n\in \mathbb{N}$ and $m\ge2$. 
    For symmetric convex bodies $K_1,\ldots,K_m \subset \mathbb{R}^n$ satisfying 
    $$
    \sum_{i<j} \langle x_i,x_j\rangle \le \frac{m-1}2 \sum_{i=1}^m \|x_i\|_{K_i}^2,\quad (x_1,\ldots,x_m) \in (\mathbb{R}^n)^m, 
    $$
   it holds that 
   $$
   \prod_{i=1}^m as_p(K_i) \le as_p(\mathbf{B}^n_2)^m = n^m |{\bf B}_2^n|^m.
   $$
\end{corollary}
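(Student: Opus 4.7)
The plan is to derive Corollary \ref{Cor:AffineIso} as a direct specialization of Theorem \ref{t:LambdaAffineIso}: choose the convex test functions $V_i$ to be the quadratic gauge functions of the $K_i$, and then translate the resulting inequality for $as_\lambda$ into one for $as_p$ using the dictionary from \cite{CFGLSW} recalled just before the statement, namely
\[
as_\lambda\!\left(\tfrac12\|\cdot\|_{K}^2\right) = \frac{(2\pi)^{n/2}}{n|\mathbf{B}^n_2|}\,as_p(K), \qquad \lambda = \frac{p}{n+p}.
\]
Concretely, I would set $V_i(x) := \frac12 \|x\|_{K_i}^2$ for each $i$. Since each $K_i$ is a symmetric convex body, its Minkowski functional is a norm and $V_i$ is an even convex function on $\mathbb{R}^n$, putting us squarely in the setting of Theorem \ref{t:LambdaAffineIso}.

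Next I would verify the coupling hypothesis. The assumed inequality on the $K_i$ rewrites, after dividing by $m-1$, as
\[
\sum_{i=1}^m V_i(x_i) \;=\; \frac12 \sum_{i=1}^m \|x_i\|_{K_i}^2 \;\ge\; \frac{1}{m-1}\sum_{i<j}\langle x_i, x_j\rangle,
\]
which is exactly the hypothesis of the first half of Theorem \ref{t:LambdaAffineIso}. That theorem therefore gives $\prod_{i=1}^m as_\lambda(V_i) \le (2\pi)^{nm/2}$ for every $\lambda \in [0, \tfrac12]$. For $p \in [0,n]$, setting $\lambda = p/(n+p) \in [0,\tfrac12]$ and substituting the displayed identity converts this into
\[
\left(\frac{(2\pi)^{n/2}}{n|\mathbf{B}^n_2|}\right)^{\!m} \prod_{i=1}^m as_p(K_i) \;\le\; (2\pi)^{nm/2},
\]
which rearranges to $\prod_{i=1}^m as_p(K_i) \le (n|\mathbf{B}^n_2|)^m$, as desired.

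The remaining range $p \in (n,\infty)$ corresponds to $\lambda \in (\tfrac12,1)$, and here one would invoke the second half of Theorem \ref{t:LambdaAffineIso} applied to the same $V_i$, using the Legendre identity $V_i^* = \frac12 \|\cdot\|_{K_i^\circ}^2$; after this, the substitution and rearrangement are identical. I expect the only step that is not mechanical is the verification that the hypothesis transfers to the polar bodies $K_i^\circ$ (i.e.\ that the same inequality holds with each $K_i$ replaced by $K_i^\circ$); this is the one place where something beyond bookkeeping is involved, but once it is in hand the corollary follows immediately. In short, there is no serious analytical obstacle: given Theorem \ref{t:LambdaAffineIso}, the proof is a one-line substitution plus, for $p>n$, a duality check on the input condition.
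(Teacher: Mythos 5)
Your derivation for $p \in [0,n]$ (that is, $\lambda \in [0,\tfrac12]$) is correct and is precisely the paper's intended argument: with $V_i = \tfrac12\|\cdot\|_{K_i}^2$, the stated hypothesis is exactly the coupling condition in the first half of Theorem \ref{t:LambdaAffineIso}, and the dictionary $as_\lambda(\tfrac12\|\cdot\|_K^2) = \frac{(2\pi)^{n/2}}{n|\mathbf{B}^n_2|}\, as_p(K)$ together with $as_p(\mathbf{B}^n_2)=n|\mathbf{B}^n_2|$ yields the claim.

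The step you flag for $p>n$ is, however, not a duality check waiting to be closed: the hypothesis does not transfer to the polars, and the inequality genuinely fails in that range under the stated hypothesis. Take $K_1=\cdots=K_m=r\mathbf{B}^n_2$ with $0<r<1$. Writing $\sum_{i<j}\langle x_i,x_j\rangle=\tfrac12\bigl(|\sum_i x_i|^2-\sum_i|x_i|^2\bigr)$ and using Cauchy--Schwarz shows the hypothesis holds precisely when $r\le 1$; but the homogeneity $as_p(rK)=r^{n(n-p)/(n+p)}as_p(K)$ gives $\prod_i as_p(K_i)=r^{mn(n-p)/(n+p)}\, as_p(\mathbf{B}^n_2)^m > as_p(\mathbf{B}^n_2)^m$ whenever $p>n$, since the exponent is then negative. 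So Corollary \ref{Cor:AffineIso} should be read with the implicit restriction $p\in[0,n]$, i.e.\ $\lambda\in[0,\tfrac12]$, exactly the range reached from the first half of Theorem \ref{t:LambdaAffineIso}; the paper leaves this restriction unstated. Applying the second half with $V_i=\tfrac12\|\cdot\|_{K_i^\circ}^2$ (so that $V_i^*=\tfrac12\|\cdot\|_{K_i}^2$ satisfies the stated coupling) would instead give $\prod_i as_p(K_i^\circ)\le (n|\mathbf{B}^n_2|)^m$ for $p\ge n$, which by the duality $as_p(K^\circ)=as_{n^2/p}(K)$ is the $p\le n$ statement in disguise, so no genuinely new range is obtained.
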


\subsection{Applications to information theory and optimal transportation theory}\label{Section4.2}
As we briefly explained in the introduction, the conjecture of Kolesnikov--Werner \eqref{e:KWInequality} is motivated from the barycenter problem from optimal transportation theory. 
For two probability measures $\mu_1,\mu_2$ on $\mathbb{R}^n$, the Wasserstein distance of these two probability measures is given by 
$$
W_2^2(\mu_1,\mu_2)
:= 
\inf_{ \nu } \int_{\mathbb{R}^n\times \mathbb{R}^n} |x_1-x_2|^2\, d\nu, 
$$
where the infimum is taken over all probability measures $\nu$ on $\mathbb{R}^n\times \mathbb{R}^n$ having $\mu_1,\mu_2$ as marginals in the sense that $\nu(E_1\times \mathbb{R}^n) = \mu_1(E_1)$ and $\nu(\mathbb{R}^n\times E_2) = \mu_2(E_2)$ for measurable $E_1,E_2 \subset \mathbb{R}^n$. 
The Wasserstein distance is actually a distance function on the set of all probability measures with the finite second moment on $\mathbb{R}^n$, denoted by $\mathcal{P}_2(\mathbb{R}^n)$, and enables us to regard this as a geometric object. 
The pair $(\mathcal{P}_2(\mathbb{R}^n), W_2)$ is called as the $L^2$-Wasserstein space on $\mathbb{R}^n$. 
Generally speaking, it is of interest to investigate geometric properties of metric measure spaces not only from purely mathematical curiosity but also from the view point of applied science; see \cite{CutDou,PeyCut,San} for instance. 
For the $L^2$-Wassersiten space, it is an important property that  $(\mathcal{P}_2(\mathbb{R}^n), W_2)$ is geodesic. 
Moreover if  $\mu_1, \mu_2 \in (\mathcal{P}_2(\mathbb{R}^n), W_2)$ are absolutely continuous with respect to the $n$-dimensional Lebesgue measure, then there exists the unique geodesic between $\mu_1$ and $\mu_2$. 
This property enables us to find the unique midpoint of $\mu_1,\mu_2 \in \mathcal{P}_2(\mathbb{R}^n)$ by considering the minimizer of 
\begin{equation}\label{e:Midpoint}
\mathcal{P}_2(\mathbb{R}^n) \ni \mu \mapsto \frac12 \big( W_2^2(\mu_1,\mu) + W_2^2(\mu_2,\mu) \big). 
\end{equation}
Such an interpolation of two probability measures is known as McCann's interpolation \cite{McCann} that leads the notion of the famous displacement of convexity of functionals on the Wasserstein space. The notion of the displacement of convexity plays a fundamental role in the theory of gradient flows in Wasserstein spaces \cite{AGS}, synthetic theory of the Ricci curvature (the curvature-dimension condition) on metric measure spaces \cite{V}. 
In this context, the functional Blaschke--Santal\'{o} inequality provides a  quantitative comparison between the information of the midpoint of $\mu_1,\mu_2$ and the relative entropy of initial probability measures. 
For an absolutely continuous probability measure $\mu = \rho dx$, its entropy (relative to the standard Gaussian) is defined by 
$$
{\rm H}(\mu|\gamma):= \int_{\mathbb{R}^n} \frac{\rho}{\gamma} \log\, \frac{\rho}{\gamma}\, d\gamma, 
$$
where $\gamma(x):= (2\pi)^{-\frac{n}{2}} e^{-\frac12 |x|^2}$, and otherwise put ${\rm H}(\mu |\gamma) \coloneqq +\infty$. 
Both $W_2^2(\cdot,\gamma)$ and ${\rm H}(\cdot |\gamma)$ measure how close is the probability measure to the standard Gaussian. 
Talagrand's transportation-cost inequality gives the quantitative comparison of these two measurements, and states that 
$
\frac12 W_2^2( \mu, \gamma)
\le 
{\rm H}( \mu|\gamma ). 
$
When either $\mu_1$ or $\mu_2$ is even, a stronger inequality holds true 
$$
    \frac12 W_2^2(\mu_1,\mu_2) 
    \le 
    {\rm H}(\mu_1|\gamma)
    + 
    {\rm H}(\mu_2|\gamma),
$$
which has been established by Fathi \cite{Fathi}. In fact, he observed that this symmetric Talagrand inequality is equivalent to the functional Blaschke--Santal\'{o} inequality. We also mention the work by Tsuji \cite{Tsuji} for its direct transport proof on 1-dimension and its further generalizations for weighted measures under certain restrictions. 
The connection to the midpoint problem \eqref{e:Midpoint} for $\mu_1,\mu_2$ may be found in \cite[Remark 7.2]{KW} by observing that $ \frac12 W_2^2(\mu_1,\mu_2) = W_2^2(\mu_1,\mu) + W_2^2(\mu_2,\mu) $, where $\mu$ is the minimizer of \eqref{e:Midpoint}.  Thus the symmetric Talagrand inequality may be read as 
\begin{equation}\label{e:SymTal}
    W_2^2(\mu_1,\mu) + W_2^2(\mu_2,\mu) \le 
    {\rm H}(\mu_1|\gamma)
    + 
    {\rm H}(\mu_2|\gamma). 
\end{equation}
It was Agueh--Carlier \cite{AC} who extended the notion of the midpoint of two inputs $\mu_1,\mu_2$ \eqref{e:Midpoint} to multiple inputs, that is the barycenter. 
For $\mu_1,\ldots,\mu_m \in \mathcal{P}_2(\mathbb{R}^n)$, the ($L^2$-Wasserstein) barycenter of these measures with weights $\frac1m$ is defined by the solution to the following minimization problem 
\begin{equation}\label{e:Barycenter}
    \inf_{\mu \in \mathcal{P}_2(\mathbb{R}^n)} \frac1{m} \sum_{i=1}^m W_2^2(\mu_i,\mu). 
\end{equation}
Agueh--Carlier \cite{AC} investigated the existence and uniqueness of the minimizer of \eqref{e:Barycenter}. 
Their main motivation to introduce and study the barycenter problem is as follows. On the one hand, it is the generalization of the notion of the midpoint of two probability measures for which there is a beautiful and powerful theory. 
On the other hand, it gives an interesting example of the metric space that confirms the existence and uniqueness of the barycenter \eqref{e:Barycenter} and that is outside of the nonpositively curved space, so-called ${\rm CAT}(0)$ space, at the same time. 
We note that the notion of the barycenter as a minimizer of an averaged squared distance has already been investigated by Sturm \cite{Strum} in the framework of nonpositively curved metric spaces. 
Thus, Agueh--Carlier's motivation was based on purely mathematical curiosity. Nevertheless, the importance of the notion of the barycenter of probability measures in  theoretical computer science has been realized recently, and we refer the survey article by Peyr\'{e}--Cuturi \cite{PeyCut} and references their in. 
These recent development and realization of the importance of the barycenter problem led Kolesnikov--Werner \cite{KW} to the following question: what is a Talagrand's transportation-cost type inequality that captures the information of the barycenter of $\mu_1,\ldots, \mu_m$, and generalizes \eqref{e:SymTal}.  
They proposed the following barycentric Talagrand inequality 
\begin{equation}\label{e:BarycenterTalagrand} 
    \frac1{2m} \sum_{i=1}^m W_2^2(\mu_i,\mu)
    \le 
    \frac{m-1}{m^2} \sum_{i=1}^m {\rm H}(\mu_i|\gamma), 
\end{equation}
where $\mu$ is the barycentetr of $\mu_1,\ldots,\mu_m$, and confirmed it when all $\mu_1,\ldots,\mu_m \in \mathcal{P}_2(\mathbb{R}^n)$ are unconditional and absolutely continuous. 
As for their conjecture on the generalized Blaschke--Santal\'{o} type inequality, they also expected that \eqref{e:BarycenterTalagrand} would hold true even if one weakens the unconditional assumption to  the evenness assumption. 
We confirmed their expectation as follows. 
\begin{theorem}\label{t:BarycenterTal}
    Let $n\in\mathbb{N}$ and $m\ge 2$. 
    Then for any symmetric $\mu_1,\ldots, \mu_m \in \mathcal{P}_2(\mathbb{R}^n)$, \eqref{e:BarycenterTalagrand} holds true. 
\end{theorem}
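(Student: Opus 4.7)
The plan is to derive Theorem~\ref{t:BarycenterTal} from Theorem~\ref{t:KW} by tracing, backwards, the equivalence noted by Kolesnikov--Werner between the barycentric Talagrand inequality and the generalised Blaschke--Santal\'{o} inequality \eqref{e:KWInequality}. I may assume each $\mu_i$ is absolutely continuous with density $\rho_i$, since otherwise $H(\mu_i|\gamma) = +\infty$ and \eqref{e:BarycenterTalagrand} holds trivially.

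The first step is to invoke Agueh--Carlier's multi-marginal reformulation of the barycenter problem. Expanding $|x_i - \bar x|^2$ with $\bar x = \tfrac{1}{m}\sum_j x_j$ yields the identity
\[
\sum_{i=1}^m |x_i - \bar x|^2 = \frac{m-1}{m}\sum_{i=1}^m |x_i|^2 - \frac{2}{m}\sum_{i<j}\langle x_i, x_j\rangle,
\]
so that, after integration against a multi-coupling $\pi$ of the $\mu_i$,
\[
\sum_{i=1}^m W_2^2(\mu_i, \mu) = \frac{m-1}{m}\sum_{i=1}^m \int |x|^2\, d\mu_i - \frac{2}{m}\sup_{\pi}\int \sum_{i<j}\langle x_i, x_j\rangle\, d\pi.
\]
By multi-marginal Kantorovich duality, and rescaling the dual variables by the factor $m-1$, the supremum on the right equals $(m-1)\inf_{(\psi_i)} \sum_i \int \psi_i\, d\mu_i$, taken over tuples $(\psi_1,\dots,\psi_m)$ satisfying $\sum_i \psi_i(x_i) \ge \tfrac{1}{m-1}\sum_{i<j}\langle x_i, x_j\rangle$. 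Because each $\mu_i$ is symmetric and both the objective and the constraint are invariant under the joint reflection $x \mapsto -x$, I may replace each $\psi_i$ by its even part and so restrict the infimum to even potentials.

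For any such admissible even $(\psi_i)$, the tuple $(e^{-\psi_i})$ is even and satisfies the Kolesnikov--Werner duality; after a standard regularization $\psi_i \mapsto \psi_i + \varepsilon |x|^2$ to ensure that each $e^{-\psi_i}$ is integrable (with $\varepsilon \downarrow 0$ removed by monotone convergence at the end), Theorem~\ref{t:KW} yields
\[
\sum_i \log \int_{\R^n} e^{-\psi_i}\, dx \le \frac{nm}{2}\log(2\pi).
\]
Combining this with the Gibbs variational principle $\log \int e^{-\psi_i}\, dx \ge -\int \psi_i\, d\mu_i - S(\mu_i|dx)$, where $S(\mu_i|dx) := \int \rho_i \log \rho_i\, dx$, and summing over $i$ gives
\[
\sum_i \int \psi_i\, d\mu_i \ge -\sum_i S(\mu_i|dx) - \frac{nm}{2}\log(2\pi).
\]

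Finally, inserting this lower bound back into the expression for $\sum_i W_2^2(\mu_i, \mu)$ and using the decomposition $H(\mu_i|\gamma) = \tfrac{1}{2}\int |x|^2\, d\mu_i + S(\mu_i|dx) + \tfrac{n}{2}\log(2\pi)$ produces \eqref{e:BarycenterTalagrand} after elementary arithmetic. The only technical point, rather than a substantive obstacle, is the restriction in the Kantorovich dual to potentials $\psi_i$ for which $e^{-\psi_i}$ is integrable (and the verification that the even-symmetrisation step preserves admissibility), which is handled by the vanishing regularization indicated above.
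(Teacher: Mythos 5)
Your proof is correct and follows the same route the paper has in mind: the paper deliberately omits the argument, stating only that Theorem~\ref{t:BarycenterTal} is ``a routine adaptation of the argument in [Proposition~7.3, KW]'', and that argument is exactly what you have reconstructed --- the Agueh--Carlier identity converting the barycenter cost into a multimarginal transport problem, the Gangbo--\'{S}wiech duality, the symmetrisation of the dual potentials (possible because the $\mu_i$ are symmetric and the constraint is invariant under $x\mapsto -x$), the Gibbs variational lower bound $\log\int e^{-\psi_i}\,dx \ge -\int\psi_i\,d\mu_i - S(\mu_i|dx)$, and finally Theorem~\ref{t:KW}. Your bookkeeping is right: summing the Gibbs bound, applying $\sum_i \log\int e^{-\psi_i}\le \tfrac{nm}{2}\log(2\pi)$, using $H(\mu_i|\gamma) = \tfrac12\int|x|^2\,d\mu_i + S(\mu_i|dx) + \tfrac{n}{2}\log(2\pi)$, and plugging back into $\sum_i W_2^2(\mu_i,\mu) = \tfrac{m-1}{m}\sum_i\int|x|^2\,d\mu_i - \tfrac{2(m-1)}{m}\inf_{(\psi_i)}\sum_i\int\psi_i\,d\mu_i$ gives precisely $\sum_i W_2^2(\mu_i,\mu)\le \tfrac{2(m-1)}{m}\sum_i H(\mu_i|\gamma)$, i.e.\ \eqref{e:BarycenterTalagrand}. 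One small point worth spelling out in the regularization step: the claim that $e^{-\psi_i-\varepsilon|x|^2}\in L^1$ is not automatic for arbitrary $\psi_i$, but it does follow because any admissible $(\psi_i)$ forces each $\psi_i$ to be bounded below --- take $x_j=0$ for all $j\ne i$ in the constraint $\sum_j\psi_j(x_j)\ge\tfrac1{m-1}\sum_{j<k}\langle x_j,x_k\rangle$ to get $\psi_i(x_i)\ge -\sum_{j\ne i}\psi_j(0)$. With that one-line justification included, the argument is complete.
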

Once we established Theorem \ref{t:KW}, the proof of Theorem \ref{t:BarycenterTal} is a routine adaptation of the argument in \cite[Proposition 7.3]{KW}, and so we omit the proof. 

\if0 
\subsection{Applications to analysis}\label{Section4.3}
Lieb's original motivation in \cite{Lieb} is to investigate $L^p$-$L^q$ boundedness of the integral operator with the Gaussian kernel \eqref{e:LpLq-Gaussian}. 
For a given $p,q\ge1$, by taking $m=2$, $(c_1,c_2)=(\frac1p,\frac1{q'})$, and $B_i(x) = x_i$ for $x=(x_1,x_2)\in\mathbb{R}^{2n}$, Theorem \ref{t:Lieb} together with the $L^p$-duality yields that \eqref{e:LpLq-Gaussian} holds true with 
$$
C = \sup_{ A>0 } \| \mathcal{G}_{\mathcal{Q}} g_A \|_{L^q(\mathbb{R}^n)}/ \| g_A\|_{L^p(\mathbb{R}^n)},
$$ 
as long as $\mathcal{Q}$ is negative semidefinite. 
Lieb \cite{Lieb} indeed established this result by allowing the complex valued matrix $\mathcal{Q} = 2\pi i \begin{pmatrix} 0 & {\rm id}_n\\
{\rm id}_n & 0\end{pmatrix}$. 
Thus, with the choice of $p \in [1,2]$ and $q=p'$, one may rederive the sharp Hausdorff--Young inequality due to Beckner \cite{Beckner} from this. 
As another example, let us take $t>0$, $p,q\ge1$ satisfying that 
\begin{equation}\label{e:Nelsontime}
    \frac{q-1}{p-1} \le e^{2t},
\end{equation}
and 
\begin{equation}\label{e:Q-HC}
\mathcal{Q} = \mathcal{Q}_{p,q,t}:= -\frac1{2(1-e^{2t})} 
\begin{pmatrix}
(1 - \frac{1-e^{-2t}}{p}){\rm id}_n & -e^{-t}{\rm id}_n \\
-e^{-t}{\rm id}_n & (1 - \frac{1-e^{-2t}}{q}){\rm id}_n
\end{pmatrix}. 
\end{equation}
For such datum, the inequality \eqref{e:LpLq-Gaussian} may be read as the famous Nelson's hypercontractivity \cite{Nelson} for Ornstein--Uhlenbeck semigroup 
$
    \| P_t f \|_{L^q(\gamma)} \le \| f \|_{L^p(\gamma)}, 
$
for any $f\in L^p(\gamma)$, and 
$$
P_tf(x):= \int_{\mathbb{R}^n} f(e^{-t}x+\sqrt{1-e^{-2t}} y)\, d\gamma(y). 
$$
In a similar manner, Barthe--Wolff's inverse Brascamp--Lieb inequality \eqref{e:IBL} yields some reverse $L^p$-$L^q$ bound of $\mathcal{G}_{\mathcal{Q}}$. 
Because of the constraint \eqref{e:BW-Nondeg}, the statement becomes a bit complicated. 
Let $\mathcal{Q}$ be a (real valued) symmetric matrix and $p,q\in\mathbb{R}\setminus\{0\}$. 
Then 
\begin{equation}\label{e:InverseLieb}
    \| \mathcal{G}_\mathcal{Q} f \|_{L^q(\mathbb{R}^n)} \ge C \|f\|_{L^p(\mathbb{R}^n)},
\end{equation}
holds for all nonnegative $f\in L^p(\mathbb{R}^n)$, where 
    $$
    C = \inf_{ A>0 } \| \mathcal{G}_{\mathcal{Q}} g_A \|_{L^q(\mathbb{R}^n)}/ \| g_A\|_{L^p(\mathbb{R}^n)}, 
    $$ if either one of the following conditions is satisfied: 
\begin{enumerate}
    \item 
    $q<0<p$ and $s^-(\mathcal{Q}) = 0$,
    \item 
    $p< 0 < q$, 
    \item 
    $pq>0$ and $s^-(\mathcal{Q}) = n$. 
\end{enumerate}
As an example of \eqref{e:InverseLieb}, by considering $\mathcal{Q}$ with \eqref{e:Q-HC}, it contains Borell's reverse hypercontractivity 
\begin{equation}\label{e:RHC}
    \| P_t f \|_{L^q(\gamma)} \ge \| f \|_{L^p(\gamma)}, 
\end{equation}
for $p,q \in (-\infty,1) \setminus \{0\}$ satisfying \eqref{e:Nelsontime}; see \cite{BW,NT,NT2} for further details. 
Thanks to Theorem \ref{t:MainIBL}, we may establish some unconditional result for \eqref{e:InverseLieb} in the case $q<0<p$ under the assumption that the input is even and log-concave. 
\begin{theorem}\label{t:InverseLieb}
    Let $q<0<p$ and $\mathcal{Q}$ be a symmetric matrix on $\mathbb{R}^{2n}$.  
    Then for any even and log-concave $f\in L^p(\mathbb{R}^n)$, \eqref{e:InverseLieb} with 
    $$
    C = \inf_{ A>0 } \| \mathcal{G}_{\mathcal{Q}} g_A \|_{L^q(\mathbb{R}^n)}/ \| g_A\|_{L^p(\mathbb{R}^n)}. 
    $$ 
\end{theorem}

Laplace bound, reverse HC, Lieb's type reverse bound of the integral transform with Gaussian kernel. 

\fi

\section{Analysis of the Gaussian constant: the conjecture of kolesnikov--Werner}\label{Section5}
\subsection{Overview and setup}\label{Section5.1}
In this section we concern about the datum \eqref{e:KWData}. 
As we explained in introduction, this datum corresponds to the conjecture of Kolesnikov--Werner. 
By virtue of Theorem \ref{t:MainGaussianSaturation}, we have that 
$$
\prod_{i=1}^m \int_{\mathbb{R}^n} f_i\, dx_i 
\le 
\sup \big\{
\prod_{i=1}^m \big( {\rm det}\, 2\pi A_i^{-1} \big)^\frac12: 
\mathbf{A}\; {\rm such\, that}\, M(\mathbf{A}) \ge0 
\big\}
$$
for any even $f_i$ satisfying the duality relation  \eqref{e:GeneDual}. 
Here, 
$$
M(\mathbf{A}):= \sum_{i=1}^m P_i^* A_i P_i -2\mathcal{Q}
= 
\begin{pmatrix}
	A_1 & -\frac1{m-1} {\rm id}_n & \cdots & -\frac1{m-1}{\rm id}_n \\
	-\frac1{m-1}{\rm id}_n & A_2 & \cdots & \vdots \\
	\vdots & & \ddots & -\frac1{m-1} {\rm id}_n \\
	-\frac1{m-1} {\rm id}_n & \cdots & -\frac1{m-1}{\rm id}_n & A_m
\end{pmatrix}.
$$
Hence, the problem of the conjecture of Kolesnikov--Werner is now reduced to the finite dimensional problem. 
That is, we are interested in the maximization problem 
\begin{equation}\label{e:MaximizeGaussian-KW}
	{\rm KW}_{\mathbf{g}}:= \sup \big\{ \prod_{i=1}^m {\rm det}\, A_i^{-1}:\; M(\mathbf{A}) \ge 0 \big\},
\end{equation}
and our goal is to show that this is maximized by $A_i = {\rm id}_n$. 

It is worth to giving the entropic formulation of this problem. 
We here use the slightly abusing notation for centered Gaussians: for a symmetric positive definite matrix $A$, 
$$
\gamma_A(x):= \big( {\rm det}\, 2\pi A \big)^{-\frac12} e^{- \frac12 \langle x, A^{-1} x\rangle } = \frac{g_{A^{-1}}}{m( g_{A^{-1}} )}. 
$$
Then the maximization problem \eqref{e:MaximizeGaussian-KW} is equivalent to find the minimizer of 
\begin{equation}\label{e:MinimizeGaussian-Tal}
    {\rm T}_{\mathbf{g}} := \inf \big\{ \mathcal{T}(\gamma_{A_1},\ldots, \gamma_{A_m}):\; A_1,\ldots, A_m >0 \big\},
\end{equation}
where 
$$
\mathcal{T}(\mu_1,\ldots,\mu_m)
:= 
\frac{m-1}{m^2} \sum_{i=1}^m {\rm H}(\mu_i|\gamma)
- 
\frac1{2m} \sum_{i=1}^m W_2^2(\mu_i,\mu)
$$
denotes the deficit of the barycenteric Talagrand inequality. 
In below, we will use $g_{A_i}$ for the functional formulation of the inequality \eqref{e:KWInequality}, and $\gamma_{A_i}$ for the entropic formulation of the inequality \eqref{e:BarycenterTalagrand}.
Note that the constant ${\rm T}_{\mathbf{g}}$ is known to be finite, and more precisely $ {\rm T}_{\mathbf{g}} \in (-\infty,0] $; see Proposition 2.1 in \cite{KW}. 
Our goal here is indeed to show that ${\rm T}_{\mathbf{g}} = 0$.  
One of the benefit of the formulation \eqref{e:MinimizeGaussian-Tal} is about the existence of the extremizer of the inequality. 
The worst scenario of proving the existence of the extremizer often comes from the possibility of the concentration of mass. 
That is, we need to exclude the potential that the minimizing sequence $\mathbf{A}^{(R)}= (A_i^{(R)})_{i=1}^m$ has the following property: at least one of $A_i^{(R)}$ is going to be degenerate as $R\to \infty$, and thus $\gamma_{A_i^{(R)}}$ contains Dirac delta in the limit. 
However, in general, if the input measure contains Dirac delta, its entropy diverges while Wasserstein distance (to other Gaussian measure) stays to be finite. Thus, it should not be the minimizing sequence of $\mathcal{T}$ unless there is some scale invariance structure such as the case of $m=2$. 
These discussions are of course heuristic, 
and we will give more detailed proof in the end of this section. 

It is thus pivotal to show that any extremizer is indeed given by $A_i = {\rm id}_n$ when $m\ge3$. 
In order to show this, we will take two steps. 
\begin{itemize}
    \item[(Step 1)]
    The first step is to extract information about the maximizer of \eqref{e:MaximizeGaussian-KW} as much as possible by following the strategy proposed by Kolesnikov--Werner in the end of their paper \cite{KW}; see Lemma \ref{l:GaussianStep1}. 
    This is enough to conclude that ${\rm KW}_{\mathbf{g}} = 1$ when $m=2$ which corresponds to the classical Blaschke--Santal\'{o} inequality. However, it seems to be highly nontrivial to prove the same conclusion for $m\ge3$. 
    \item [(Step 2)]
    Because of such a difficulty in the case $m\ge3$, we address the following purely linear algebraic problem: 
    given any symmetric positive definite matrices $X_1,\ldots,X_m$ satisfying 
    $$
    0< X_i < {\rm id}_n,\quad \sum_{i=1}^m X_i = {\rm id}_n,\quad X_i-X_i^2 = X_j -X_j^2
    $$
    for any $i,j =1,\ldots, m$, is it true that $X_i$ is necessarily $\frac1m {\rm id}_n$? 
    When $n=1$, this is clearly true by solving equations elementally. 
    We will prove that this is indeed the case for any $n\ge 2$ in Proposition \ref{t:GaussianStep2}. 
\end{itemize}

Before giving detailed proofs, let us give a remark about the difficulty of applying the standard strategy to this Gaussian analysis. 
Given the viewpoint of the inverse Brascamp--Lieb inequality, one may wonder whether is it possible to identify 
${\rm I}_{\mathbf{g}}(\mathbf{n},\mathbf{c}(p), \mathcal{Q}_p)$, 
where 
$$
n_i=n,\quad c_i(p) = \frac1p, \quad \mathcal{Q}_p = \frac{\mu_p}{p} \mathcal{Q} 
$$
for some parameter $\mu_p <1$ such that $\lim_{p\to0} \mu_p = 1$. 
In fact, it is tractable to expect that ${\rm I}_{\mathbf{g}}( \mathbf{n},\mathbf{c}(p),\mathcal{Q}_p )$ is attained by the standard Gaussian $g(x_i) = e^{-\frac12 |x_i|^2}$ for all $p\ll1$ by choosing the suitable parameter $\mu_p$.
When $m=2$, this has been confirmed by authors in \cite{NT2}, and indeed it gave the sharp bound of the Laplace transform \eqref{e:LaplaceIneq}. 
If one could show this, then the desired conclusion ${\rm KW}_{\mathbf{g}}=1$ would immediately follow by taking the limit $p\to 0$; recall Theorem \ref{t:RegGaussSatKW}. 
The standard strategy to identify the Gaussian Brascamp--Lieb constant, which is usually possible for so-called the geometric data, relies on the certain log-convexity/concavity of the functional $\mathbf{A} \mapsto {\rm BL}(\mathbf{A})$; see \cite[Proposition 6]{Barthe1} for the forward Brascamp--Lieb inequality and \cite[Proposition 4.4]{BW} for the inverse Brascamp--Lieb inequality. 
In particular, Theorem 4.5 due to Barthe--Wolff \cite{BW}, which is the consequence of their Proposition 4.4, is of relevant to us. 
It in fact provides the characterization of the Gaussian extremizer of the inverse Brascamp--Lieb inequality for any data that satisfy their nondegeneracy condition. 
So our problem identifying the above ${\rm I}_{\mathbf{g}}(\mathbf{n},\mathbf{c}(p),\mathcal{Q}_p)$ may be regarded as an extension of Theorem 4.5 of Barthe--Wolff to degenerate Brascamp--Lieb data. 
Their proof of Theorem 4.5 heavily depend on their nondegeneracy condition, which enables them to apply some log-concavity type property, and thus such extension seems to require some new idea.

\subsection{Proof of ${\rm KW}_{\mathbf{g}}=1$}\label{Section5.2}
\begin{lemma}\label{l:GaussianStep1}
	Let $n\in\mathbb{N}$ and $m\ge2$. 
	Suppose that $\mathbf{A} = (A_i)_{i=1}^m$ is the maximizer of \eqref{e:MaximizeGaussian-KW}. 
	\begin{enumerate}
		\item 
		It holds that 
		\begin{align}\label{e:A_iA_j}
			&\big( \frac{m-1}m A_i + \frac1m {\rm id}_n \big) \big( \frac{m-1}{m} {\rm id}_n + \frac1m A_i^{-1} \big)\\
			&= 
			\big( \frac{m-1}m A_j + \frac1m {\rm id}_n \big) \big( \frac{m-1}{m} {\rm id}_n + \frac1m A_j^{-1} \big)\nonumber 
		\end{align}
		for any $i,j \in [m]$. 
		\item 
		The barycenter of $\mu_i = \frac{g_{A_i}}{m(g_{A_i})}dx$, $i=1,\ldots,m$, is given by $\frac{g_{A_0}}{m(g_{A_0})} dx$ where 
		\begin{equation}\label{e:BarycenterMaximizer}
		\quad A_0:= \big( \frac{m-1}{m} A_i + \frac{1}{m} {\rm id}_n \big)^{-1} \big( \frac{m-1}{m} {\rm id}_n + \frac{1}{m} A_i^{-1} \big)^{-1}.
		\end{equation}
		Remark that the definition of $A_0$ is independent of the choice of $i$ thanks to \eqref{e:A_iA_j}. 
		\item 
		It holds that 
		\begin{equation}\label{e:MaximizerAverage}
			\sum_{i=1}^m \big( (m-1)A_i + {\rm id}_n \big)^{-1} 
			= {\rm id}_n. 
		\end{equation}
	\end{enumerate} 
\end{lemma}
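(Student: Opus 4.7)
The plan is to extract all three statements from the KKT optimality conditions at a maximizer $\mathbf{A}$ of the semidefinite program \eqref{e:MaximizeGaussian-KW}. By the Talagrand-deficit formulation \eqref{e:MinimizeGaussian-Tal}, the infimum is attained in the interior of the positivity cone, because the entropy term blows up whenever some $A_i$ becomes degenerate or unbounded while the Wasserstein terms remain controlled. Slater's condition is trivially satisfied (take $A_i = R\,\mathrm{id}_n$ with $R$ large), so standard SDP duality supplies a symmetric positive semidefinite block multiplier $\Lambda = (\Lambda_{ij})_{i,j=1}^m$ with $n \times n$ blocks, obeying complementary slackness $\Lambda\, M(\mathbf{A}) = 0$ and, upon differentiating the Lagrangian $\sum_i \log \det A_i - \langle \Lambda, M(\mathbf{A}) \rangle$ in each $A_i$, the stationarity condition $\Lambda_{ii} = A_i^{-1}$ for all $i \in [m]$.

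Next I would analyze $\ker M(\mathbf{A})$ directly from the block equation $Mv = 0$. Writing $v = (v_1,\dots,v_m)$ and $s := \sum_j v_j$, the $i$-th block reads $A_i v_i = \frac{1}{m-1}(s - v_i)$, so
\begin{equation*}
v_i = C_i s, \qquad C_i := \bigl((m-1)A_i + \mathrm{id}_n\bigr)^{-1},
\end{equation*}
subject to the compatibility $\bigl(\sum_i C_i\bigr) s = s$. Since $\Lambda \succeq 0$ and $\Lambda M = 0$ force $\mathrm{Range}(\Lambda) \subseteq \ker M(\mathbf{A})$, a spectral decomposition $\Lambda = \sum_\alpha w_\alpha w_\alpha^T$ with $w_\alpha = (C_1 s_\alpha,\dots, C_m s_\alpha) \in \ker M(\mathbf{A})$ produces the factorization $\Lambda_{ij} = C_i\, Q\, C_j$ for a single PSD matrix $Q := \sum_\alpha s_\alpha s_\alpha^T$ whose range lies in $E := \ker(\mathrm{id}_n - \sum_i C_i)$.

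Combining the factorization with $\Lambda_{ii} = A_i^{-1}$ gives the key identity $Q = C_i^{-1} A_i^{-1} C_i^{-1} = \bigl((m-1)A_i + \mathrm{id}_n\bigr)\, A_i^{-1}\, \bigl((m-1)A_i + \mathrm{id}_n\bigr)$. Expanding, the right-hand side equals $m^2$ times the product appearing in \eqref{e:A_iA_j}, and since the left-hand side is intrinsic to $\Lambda$ and so independent of $i$, statement (1) follows. Because $A_i^{-1}$ has full rank $n$, the relation $C_i Q C_i = A_i^{-1}$ forces $Q$ to have full rank, hence $E = \mathbb{R}^n$, which is precisely $\sum_i C_i = \mathrm{id}_n$, that is statement (3). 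For (2), I would invoke the Agueh--Carlier characterization of the Wasserstein barycenter of centered Gaussians: the barycenter of $\mu_i = \gamma_{A_i^{-1}}$ has covariance $\Sigma$ satisfying $\Sigma = \frac{1}{m}\sum_i (\Sigma^{1/2} A_i^{-1} \Sigma^{1/2})^{1/2}$. Plugging in the candidate $\Sigma = A_0^{-1} = m^{-2} Q$ and using $A_i^{-1} = C_i Q C_i$, each summand simplifies to $m^{-1} Q^{1/2} C_i Q^{1/2}$, and (3) then collapses the right-hand side to $m^{-2} Q = \Sigma$, as required.

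The main technical obstacle I anticipate is the careful handling of the SDP KKT step, in particular verifying that the maximizer lies in the interior of the positivity cone so that $\log \det A_i$ is differentiable there, and reading off the matrix-valued stationarity $\Lambda_{ii} = A_i^{-1}$ cleanly. Once these preliminaries are in place, the kernel computation, the factorization of $\Lambda$ via $\mathrm{Range}(\Lambda) \subseteq \ker M(\mathbf{A})$, and the subsequent verifications of (1)--(3) reduce to straightforward linear-algebraic bookkeeping.
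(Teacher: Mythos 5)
Your proof is correct and takes a genuinely different route from the paper. The paper follows Kolesnikov--Werner's strategy: it invokes the multimarginal Kantorovich duality and the Gangbo--\'Swiech characterization of the dual potentials $U_i$, the relation $\Phi_i^* = \frac{1}{2m}|\cdot|^2 + \frac{m-1}{m}U_i$ with the optimal transport maps $\nabla\Phi_i$ from the barycenter, and, crucially, Theorem 8.2 of \cite{KW} together with an analysis of its equality case to identify $U_i(x) = \frac12\langle x, A_i x\rangle$; parts (1)--(3) then drop out from the change-of-variables identity for the barycenter density and the relation $\frac1m\sum_i\nabla\Phi_i = \mathrm{id}_n$. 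You instead treat \eqref{e:MaximizeGaussian-KW} as a semidefinite program and extract everything from the KKT system: stationarity $\Lambda_{ii}=A_i^{-1}$, complementary slackness $\Lambda M(\mathbf{A})=0$, the explicit description $\ker M(\mathbf{A}) = \{(C_1 s,\dots,C_m s): s\in\ker(\mathrm{id}_n - \sum_i C_i)\}$ with $C_i = ((m-1)A_i+\mathrm{id}_n)^{-1}$, and the resulting factorization $\Lambda_{ij}=C_iQC_j$. This yields $Q = C_i^{-1}A_i^{-1}C_i^{-1}$, which is (1) up to the factor $m^2$; full rank of $A_i^{-1}$ forces $Q$ nonsingular, so $\sum_i C_i = \mathrm{id}_n$, which is (3); and (2) follows by verifying the Agueh--Carlier fixed point with $\Sigma = m^{-2}Q$, using $(\Sigma^{1/2}A_i^{-1}\Sigma^{1/2})^{1/2} = m^{-1}Q^{1/2}C_iQ^{1/2}$ and then (3). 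Your approach is more elementary and self-contained: it sidesteps the optimal-transport machinery entirely and, in particular, avoids the reliance on Theorem 8.2 of \cite{KW} and its equality case, which is the most delicate ingredient of the paper's proof. The paper's route, on the other hand, keeps the argument aligned with the Wasserstein-barycenter picture that motivates the whole problem.

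The one place you should tighten is the KKT step. Since $-\sum_i\log\det A_i$ is convex, the problem maximizes a convex function over a convex set, which is nonconvex; so you cannot invoke convex-programming strong duality, and you should instead argue directly with the normal cone. Concretely: the feasible set $C=\{\mathbf A : M(\mathbf A)\succeq 0\}$ is the preimage of $S_+^{mn}+2\mathcal{Q}$ under the affine map $\mathbf A\mapsto\sum_i P_i^*A_iP_i$; Slater's condition gives $N_C(\mathbf A^*) = \{(-\Lambda_{ii})_i : \Lambda\succeq 0,\ \langle\Lambda, M(\mathbf A^*)\rangle = 0\}$; and first-order optimality of a local maximum of a $C^1$ objective on a convex set is $\nabla f(\mathbf A^*)\in N_C(\mathbf A^*)$, which is exactly $\Lambda_{ii}=A_i^{-1}$. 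Phrased this way the argument is rigorous without any appeal to convex duality. Note also that if $M(\mathbf A^*)\succ 0$ the multiplier would vanish, contradicting $\Lambda_{ii}=A_i^{-1}\neq 0$, so $M(\mathbf A^*)$ is automatically singular and the kernel analysis is nonvacuous.
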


\begin{proof}
	The proof is to work out the strategy that has been proposed by Kolesnikov--Werner \cite{KW} for Gaussian inputs.  
	First of all, let us recall the direct relation between the barycenter problem and the Kantorovich duality under general setting. 
	The following general properties may be found in Theorem 2.4 in \cite{KW}. 
	Let $\mu_i$ be absolutely continuous measures with finite second moments and $\mu$ be the barycenter of $(\mu_i)_{i=1}^m$. 
	Then it holds that 
	\begin{equation}\label{e:Bary-Kantoro}
	\frac1{2m} \sum_{i=1}^m W_2^2(\mu_i,\mu)
	= 
	\frac{1}{2m^2} \int_{(\mathbb{R}^n)^m} \sum_{i<j} |x_i-x_j|^2\, d\pi.  
	\end{equation}
	Here, $\pi$ is the unique solution to the multimarginal Kantorovich problem 
	$$
	\sup_{ \nu } \int_{(\mathbb{R}^{n})^m} \sum_{i<j} \langle x_i,x_j\rangle\, d\nu, 
	$$
	where the supremum is taken over all measures $\nu$ on $(\mathbb{R}^n)^m$ having $\mu_1,\ldots,\mu_m$ as marginals. 
    This direct relation has been found by Agueh--Carlier \cite{AC}, see also the statement 2 in \cite[Theorem 2.4]{KW}. 
	The duality to the multimarginal Kantorovich problem has been established by Gangbo--\'{S}wiech \cite{GS}, see also the statement 4 in \cite[Theorem 2.4]{KW}, and states that 
	$$
	\frac1{m-1} \int_{(\mathbb{R}^n)^m} \sum_{i<j} |x_i-x_j|^2\, d\pi
	=
	\inf_{W_1,\ldots, W_m} \sum_{ i=1 }^m \int_{\mathbb{R}^n} W_i\, d\mu_i,
	$$
	where the infimum is taken over all $W_1,\ldots, W_m$ such that 
	$$
	\sum_{i=1}^m W_i(x_i) \ge \frac1{m-1} \sum_{i<j} \langle x_i,x_j\rangle ,\quad x_i,x_j \in \mathbb{R}^n.
	$$
	Again, the existence and uniqueness of the dual problem is known. 
	That is, there exists a tuple of convex functions $U_1,\ldots, U_m$ such that 
	\begin{equation}\label{e:MinimizerU_i}
		\frac1{m-1} \int_{(\mathbb{R}^n)^m} \sum_{i<j} |x_i-x_j|^2\, d\pi
		=
		\inf_{W_1,\ldots, W_m} \sum_{ i=1 }^m \int_{\mathbb{R}^n} W_i\, d\mu_i 
		= 
		\sum_{ i=1 }^m \int_{\mathbb{R}^n} U_i\, d\mu_i, 
	\end{equation}
	and that 
	\begin{equation}\label{e:Assump-KWConj}
		\sum_{i=1}^m U_i(x_i) \ge \frac1{m-1} \sum_{i<j} \langle x_i,x_j\rangle ,\quad x_i,x_j \in \mathbb{R}^n. 
	\end{equation}
	Moreover, by denoting the optimal transport map from $\mu$ onto $\mu_i$ by $\nabla \Phi_i$, \eqref{e:Assump-KWConj} becomes equality for $(x_1,\ldots,x_m) \in \{ ( \nabla \Phi_1(y),\ldots, \nabla\Phi_m(y) ): y \in \mathbb{R}^n \}$. 
	In particular, \eqref{e:Bary-Kantoro} and \eqref{e:MinimizerU_i} yield that 
	\begin{equation}\label{e:Bary-DualKantoro}
		\frac1{2m} \sum_{i=1}^m W_2^2(\mu_i,\mu)
		= 
		\frac{m-1}{2m^2} 
		\sum_{i=1}^m \int_{\mathbb{R}^n} U_i\, d\mu_i. 
	\end{equation}
	It is also worth to mentioning that $\Phi_i$ and $U_i$ are constrained by  
	\begin{equation}\label{e:Phi_i-U_i}
		\Phi_i^*(x_i)
		= 
		\frac1{2m} |x_i|^2 + \frac{m-1}{m} U_i(x_i), 
	\end{equation}
	where $\Phi_i^*$ is the Legendre dual of $\Phi_i$. 
	Finally, note that $\Phi_1,\ldots \Phi_m$ satisfy that 
	\begin{equation}\label{e:AverageOTMap}
	\frac1m \sum_{i=1}^m \nabla \Phi_i = {\rm id}_n. 
	\end{equation}

	We now take specific $\mu_i = \frac{g_{A_i}}{ m( g_{A_i} ) }$ where $\mathbf{A}$ is the maximizer of \eqref{e:MaximizeGaussian-KW}. 
	Let $U_i$ be the solution to the dual multimarginal Kantorovich problem for such $\mu_i$. 
	Then $U_i$ is also a quadratic function\footnote{This is because, the fact that all $\mu_i$ are centered Gaussian confirms that the barycenter $\mu$ is also centered Gaussian, and so $\Phi_i$ a quadratic function. 
	Thus, \eqref{e:Phi_i-U_i} yields that $U_i$ is also quadratic. 
	}.
	On the one hand, from Theorem 8.2 in \cite{KW}, it holds that 
	$$
	\prod_{i=1}^m \int_{\mathbb{R}^n} g_{A_i}\, dx_i
	\le 
	\prod_{i=1}^m \int_{\mathbb{R}^n} e^{-U_i}\, dx_i. 
	$$
	On the other hand, by recalling that $e^{-U_i}$ is the centered Gaussian and that $U_i$ satisfies \eqref{e:Assump-KWConj} which is equivalent to the assumption \eqref{e:GeneDual}, the fact that $\mathbf{A}_i$ is the maximizer of \eqref{e:MaximizeGaussian-KW} yields that 
	$$
	\prod_{i=1}^m \int_{\mathbb{R}^n} g_{A_i}\, dx_i
	\ge 
	\prod_{i=1}^m \int_{\mathbb{R}^n} e^{-U_i}\, dx_i. 
	$$
	Thus, $g_{A_1},\ldots,g_{A_m}$ establishes equality of Theorem 8.2 in \cite{KW}. 
	By investigating the equality case of Theorem 8.2 in \cite{KW} from its proof, one may conclude that 
	$$
	U_i(x_i) = \frac12 \langle x_i , A_i x_i\rangle. 
	$$ 
	We refer to the discussion just after Theorem 8.2 in \cite{KW} for further details of this point. 
	From this and \eqref{e:Phi_i-U_i}, we may identify $\Phi_i$ as 
	$$
	\Phi_i(x_i) 
	= 
	\frac12 \langle x_i, \big( \frac{m-1}m A_i + \frac{1}m{\rm id}_n  \big)^{-1} x_i\rangle. 
	$$
	Since $\nabla \Phi_i$ is the optimal transport map from $\mu$ to $\mu_i$, the density $\rho$ of $\mu$ must satisfy that 
	$$
	\rho(x)
	= 
	\frac{g_{A_i}(\nabla \Phi_i(x)) }{ m(g_{A_i}) } {\rm det}\, \nabla^2 \Phi_i(x). 
	$$
	Since this identity holds for all $i=1,\ldots,m$, by inserting the explicit form of $\Phi_i$, we obtain \eqref{e:A_iA_j} and \eqref{e:BarycenterMaximizer}. 
	The identity \eqref{e:MaximizerAverage} follows from the explicit form of $\Phi_i$ together with \eqref{e:AverageOTMap}. 
\end{proof}

When the classical case $m=2$, Lemma \ref{l:GaussianStep1} is enough to conclude that the Gaussian constant \eqref{e:MaximizeGaussian-KW} is achieved for $(A_1,A_2) = (A,A^{-1})$ for any $A>0$ and so ${\rm KW}_{\mathbf{g}}=1$, which is exactly the extremizer of the functional Blaschke--Santal\'{o} inequality. 
Therefore, one may expect that the strategy of Kolesnikov--Werner for the alternative proof of the functional Blaschke--Santal\'{o} inequality would work well when $m=2$. 
Indeed it is very recent that Colesanti--Kolesnikov--Livshyts--Rotem \cite{CKLR} gave the mass transport proof of the functional Blaschke--Santal\'{o} inequality based on this strategy. 
However, the case for $m\ge3$ is a different story, and requires further substantial work. 
In fact, it is no longer trivial to conclude that $A_i = {\rm id}_n$ is the maximizer of \eqref{e:MaximizeGaussian-KW} from Lemma \ref{l:GaussianStep1} as far as we are aware, and proving this fact relies on a nontrivial linear algebraic argument. 

\begin{proposition}\label{t:GaussianStep2}
	Let $n\in \mathbb{N}$ and $m\ge3$. 
	If $\mathbf{A}= (A_i)_{i=1}^m$ is the maximizer of \eqref{e:MaximizeGaussian-KW} then $A_i = {\rm id}_n$, $i=1,\ldots,m$. 
\end{proposition}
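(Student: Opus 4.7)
My plan is to follow the route laid out in Section \ref{Section5.1}, namely to reduce Proposition \ref{t:GaussianStep2} to the purely linear-algebraic problem stated there. Setting $X_i := ((m-1)A_i + {\rm id}_n)^{-1}$ turns $A_i > 0$ into $0 < X_i < {\rm id}_n$ and recasts the conditions from Lemma \ref{l:GaussianStep1}: a direct computation using $A_i = (X_i^{-1} - {\rm id}_n)/(m-1)$ together with the identity $(X_i^{-1} - {\rm id}_n)^{-1} = X_i({\rm id}_n - X_i)^{-1}$ identifies the product in \eqref{e:A_iA_j} with $\frac{m-1}{m^2}(X_i - X_i^2)^{-1}$, while \eqref{e:MaximizerAverage} becomes $\sum_i X_i = {\rm id}_n$. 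Thus it suffices to show: if $0 < X_i < {\rm id}_n$ for $i \in [m]$ with $m \ge 3$, $\sum_i X_i = {\rm id}_n$, and $X_i - X_i^2$ is independent of $i$, then $X_i = \frac{1}{m}{\rm id}_n$.

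The decisive substitution is $Y_i := X_i - \frac{1}{2}{\rm id}_n$, which shifts the centre. Since $X_i({\rm id}_n - X_i) = \frac{1}{4}{\rm id}_n - Y_i^2$, the hypothesis on $X_i - X_i^2$ becomes the statement that $Y_i^2 =: B$ is the same symmetric matrix for every $i$, while the other constraints read $\sum_i Y_i = -\frac{m-2}{2}{\rm id}_n$ and $0 \le B < \frac{1}{4}{\rm id}_n$ (strictly). A short preliminary step shows $B > 0$ when $m \ge 3$: any symmetric matrix squaring to $0$ must vanish, so $Y_i$ restricts to $0$ on $\ker B$, and summing then forces $-\frac{m-2}{2}{\rm id}_n$ to vanish on $\ker B$, so $\ker B = \{0\}$.

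The heart of the argument is an eigenspace-by-eigenspace analysis of $B$. Fix an eigenvalue $\mu > 0$ of $B$ with eigenspace $E$. Since each $Y_i$ commutes with $B = Y_i^2$, the subspace $E$ is $Y_i$-invariant, and $(Y_i|_E)^2 = \mu\, {\rm id}_E$ together with symmetry forces $Y_i|_E = \sqrt{\mu}\,(2 P_i - {\rm id}_E)$ for some orthogonal projection $P_i$ on $E$. Summing over $i$ and using $\sum_i Y_i|_E = -\frac{m-2}{2}{\rm id}_E$ produces
\[
\sum_{i=1}^m P_i = c\, {\rm id}_E, \qquad c := \frac{m}{2} + \frac{2-m}{4\sqrt{\mu}}.
\]
Now the rigidity kicks in: each orthogonal projection satisfies $P_i \le \sum_j P_j = c\, {\rm id}_E$, so every eigenvalue of $P_i$ (lying in $\{0,1\}$) is bounded by $c$; if $c < 1$, then necessarily $P_i = 0$, forcing $c = 0$. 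A direct check shows that $c < 1$ whenever $m \ge 3$ and $\sqrt{\mu} \in (0, 1/2)$, so we must have $c = 0$, which uniquely solves $\sqrt{\mu} = \frac{1}{2} - \frac{1}{m}$. Hence $B$ has the single eigenvalue $(\tfrac12-\tfrac1m)^2$, i.e.\ $B = (\tfrac12-\tfrac1m)^2{\rm id}_n$, and $P_i = 0$ on all of $\R^n$, giving $Y_i = -(\tfrac12-\tfrac1m){\rm id}_n$, $X_i = \frac{1}{m}{\rm id}_n$, and finally $A_i = {\rm id}_n$.

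The main obstacle I anticipate is discovering the shift $Y_i = X_i - \frac{1}{2}{\rm id}_n$ that replaces the diagonal quadratic relation with the common square-root relation $Y_i^2 = B$; once that is in place, the spectral rigidity of sums of orthogonal projections closes the argument. It is crucial that the upper bound $Y_i^2 < \frac{1}{4}{\rm id}_n$ is \emph{strict} (which follows from $X_i > 0$, and hence ultimately from $A_i > 0$), because otherwise $c = 1$ would be permitted and a much wider family of solutions would exist -- this is precisely the degenerate case $m = 2$ where the functional Blaschke--Santal\'{o} inequality has its one-parameter family of Gaussian extremizers.
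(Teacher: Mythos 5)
Your proof is correct and takes a genuinely cleaner route than the paper's. The common starting point is the reduction to: if $0 < X_i < {\rm id}_n$, $\sum_i X_i = {\rm id}_n$, and $X_i - X_i^2$ is independent of $i$, then $X_i = \tfrac1m{\rm id}_n$ (your recomputation of \eqref{e:A_iA_j} and \eqref{e:MaximizerAverage} in terms of $X_i = ((m-1)A_i + {\rm id}_n)^{-1}$ is accurate). Where the two arguments diverge is in how they exploit the "common Hessian" condition. The paper first reduces to the case where $A_0 = \frac{m^2}{m-1}(X_i - X_i^2)$ is a scalar matrix (via a common rotation and a block decomposition), then shows $\det X_i$ is independent of $i$ by a short algebraic trick, derives a trace bound forcing the multiplicity $k$ of the eigenvalue $\alpha$ of $X_i$ to satisfy $k > (1-\tfrac1m)n$, and finally extracts the identity $\sum_i Q_i = \frac{1-m\alpha}{1-2\alpha}{\rm id}_n$ (where $Q_i$ is the projection onto the $(1-\alpha)$-eigenspace of $X_i$) by evaluating the polynomial identity for $X_i^\ell$ and letting $\ell\to\infty$; a rank count then forces $\alpha = \tfrac1m$. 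Your shift $Y_i := X_i - \tfrac12{\rm id}_n$ short-circuits essentially all of this: the hypothesis becomes the common-square condition $Y_i^2 = B$, the eigenspace decomposition of $B$ is automatic (no block reduction needed, since each eigenspace of $B$ is handled independently), and the key identity $\sum_i P_i = c\,{\rm id}_E$ drops out directly from $\sum_i Y_i|_E = -\tfrac{m-2}2{\rm id}_E$ once you write $Y_i|_E = \sqrt\mu(2P_i - {\rm id}_E)$. Your closing rigidity observation --- that orthogonal projections $P_i$ with $\sum_j P_j = c\,{\rm id}_E$ satisfy $P_i \le c\,{\rm id}_E$, so $c<1$ and the $\{0,1\}$-spectrum force $P_i=0$ and hence $c=0$ --- replaces the paper's determinant equality, multiplicity bound, power-series limit, and rank count in one stroke. (One can check directly that your $c = \tfrac m2 + \tfrac{2-m}{4\sqrt\mu}$ equals the paper's $\tfrac{1-m\alpha}{1-2\alpha}$ under $\alpha = \tfrac12 - \sqrt\mu$, so the two identities are the same; you simply reach it faster and conclude from it more efficiently.) The only thing the paper's longer route arguably buys is that it exhibits the intermediate structural facts (equal determinants, multiplicity bound) explicitly, which could be of independent interest, but for the theorem at hand your argument is shorter and uses less machinery.
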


\begin{proof}
We denote $X_i:= \big( (m-1) A_i + {\rm id}_n \big)^{-1}$ and reformulate conditions \eqref{e:A_iA_j} and \eqref{e:MaximizerAverage} as follows: 
$$
X_i - X_i^2 = X_j - X_j^2, \;\;\; \forall i,j \in [m]
\quad 
{\rm and} 
\quad 
\sum_{i=1}^m X_i = {\rm id}_n.
$$
Moreover we know that $0<X_i<{\rm id}_{n}$. 
Hence our goal is to show the following: if symmetric positive definite matrices $X_1, \dots, X_m$, where $m\ge3$, satisfies 
\begin{itemize}
\item[(i)]  $0<X_i<{\rm id_{n}}$, 
\item[(ii)] 
$$
\sum_{i=1}^m X_i = {\rm id}_{n},
$$
\item[(iii)]
$$
X_i - X_i^2 = X_j - X_j^2, \;\;\; \forall i,j \in [m], 
$$
\end{itemize}
then 
\begin{equation*}
X_i = \frac1m {\rm id}_{n}, \;\;\; \forall i \in [m].
\end{equation*}
With this terminology, the barycenter $A_0$ is given by 
$$
A_0 = \frac{m^2}{m-1} (X_i - X_i^2),
$$
where the right-hand side is independent of the choice of $i$. 
As a first reduction, we may suppose that $A_0$ is diagonal. 
This is because, the problem is invariant under the common rotation $A_i \mapsto U^* A_i U$, where $U$ is any orthogonal matrix, and hence we may assume $A_1$ is diagonal for instance. 
As a second reduction, we may further assume that $A_0$ is a scaler matrix. 
To see this, let us suppose that the claim is true when $A_0$ is a scalar matrix for the time being, and conclude the claim when $A_0$ is any diagonal matrix. 
The $A_0$ may be written as 
$$
A_0 = {\rm diag}\, ( \underbrace{\lambda_1,\ldots, \lambda_1}_{\mu_1},\ldots, \underbrace{\lambda_l,\ldots,\lambda_l}_{\mu_l}),
$$
for some $l \in \{1,\ldots,n\}$, $\lambda_1,\ldots,\lambda_l>0$, and $\mu_1,\ldots,\mu_l$ denote the multiplicity of each eigenvalues; $\mu_1+\cdots + \mu_l = n$. 
If $l=1$ then $A_0$ becomes a scalar matrix, so suppose $l\ge2$. 
In this case, $A_0$ may be decomposed into $l$ many small block matrices. 
Accordingly, for any $i=1,\ldots,m$, $X_i$ is also decomposed into $l$ many small block matrices: 
$$
X_i = 
\begin{pmatrix}
	X_i^{(1)} & 0 & \cdots & 0 \\
	0 & X_i^{(2)} & \cdots &0 \\
	\vdots & & \ddots & \vdots \\
	0& \cdots & & X_i^{(l)},
\end{pmatrix}
$$
where $X_i^{(k)}$, $k=1,\ldots,l$, is $\mu_k\times \mu_k$ symmetric matrix. 
To see this, one has only to notice the following: if we denote the eigenspace of the eigenvalue $\lambda_k$ of $A_0 = \frac{m^2}{m-1} (X_i - X_i^2)$ by $V_k \simeq \mathbb{R}^{\mu_k}$, then $X_i$ maps $V_k$ to $V_k$. 
Thanks to the block matrix structure of $X_i$, properties (i),(ii),(iii) of $X_i$ infer to each $X_i^{(k)}$, $k=1,\ldots l$. 
Since each $X_i^{(k)}$ is scalar, we may apply the assumption to conclude that $X_i^{(k)} = \frac1m {\rm id}_{\mu_k}$, and hence $X_i = \frac1m {\rm id}_n$. 

As $A_0$ is a scalar matrix, we may have that 
\begin{equation}\label{e:Scalar}
X_i - X_i^2 = \lambda\, {\rm id}_{n}, 
\end{equation}
where $\lambda \in \R$ is independent of $i$. 
First notice that eigenvalues of $X_i$, $i=1,\ldots m$, are given by $\alpha,\ldots, \alpha, 1-\alpha,\ldots, 1-\alpha$ for some $\alpha \in (0,\frac12)$. Here $\alpha$ is independent of $i$, but the multiplicity of $\alpha$ may depend on $i$. 
This is because \eqref{e:Scalar} implies that if $\beta \in (0,1)$ is an eigenvalue of $X_i$, it must satisfy that 
$$
\beta - \beta^2 = \lambda, 
$$
and thus 
$$
\beta =\frac{1 \pm \sqrt{1 - 4\lambda}}{2}, \;\;\; 0 < \lambda \le \frac14.
$$
By denoting $\alpha \coloneqq \frac{1 - \sqrt{1 - 4\lambda}}{2} \in (0, \frac12]$, the eigenvalues of $X_i$ must be $\alpha$ or $1-\alpha$. 
If $\alpha=\frac12$, then all eigenvalues of $X_i$ are only $\frac12$, and hence we see that $X_i = \frac12 {\rm id}_{n}$. 
However, this is prohibited by (ii) and $m\ge3$. 

Secondly, let us show that 
$$
{\rm det}\, X_i = {\rm det}\, X_j, \;\;\; \forall i, j \in [m]. 
$$
Indeed it follows from $m \ge 3$, (ii) and (iii) that 
\begin{align*}
X_i \sum_{k \neq i, j} X_k
&=
X_i ( {\rm id}_n - X_i - X_j)
=
X_i - X_i^2 - X_i X_j
=
X_j - X_j^2 - X_iX_j
\\
&=
({\rm id}_n - X_j - X_i) X_j
=
(\sum_{k \neq i, j} X_k) X_j. 
\end{align*}
Since $ \sum_{k \neq i, j} X_k$ is invertible by $m \ge 3$, we conclude that $
{\rm det}\, X_i = {\rm det}\, X_j. 
$

From this fact and $\alpha \in (0,\frac12)$, we may derive the following:
if we denote the multiplicity of the eigenvalue $\alpha$ of $X_i$ by $k_i \in \{ 0, 1, \dots, n\}$, then $k_i = k_j$ for all $i, j$. 
In what follows, we denote $k=k_i$, which is independent of $i$. 
Moreover, we observe that $k > ( 1 - \frac1m) n$ as follows. 
It follows from (ii) that 
$$
mk \alpha + m(n-k) (1-\alpha)
=
{\rm Tr}\, \sum_{i=1}^m X_i 
=
n, 
$$
which yields that 
$$
(2mk - mn) \alpha = n - mn +mk.
$$
If $2k=n$, then $0= n - mn+mk = (2-m)k$ which is a contradiction. 
Hence $2k \neq n$, and thus 
$$
\alpha = \frac{ n - mn + mk}{2mk-mn}. 
$$
Since $0 < \alpha < \frac12$, we have that 
$$
0 < \frac{ n - mn + mk}{2mk-mn} < \frac12. 
$$
If $2k < n$, then 
$$
0 > n - mn + mk > \frac12 ( 2mk -mn), 
$$
which contradicts with $m\ge3$, and so we always have that $2k>n$. 
This yields that 
$$
0 < n - mn + mk < \frac12 ( 2mk -mn), 
$$
from which we obtain the desired lower bound of $k$. 

The next thing to observe is the identity 
\begin{equation}\label{e:XiEll}
X_i^\ell = \frac1{1-2\alpha} \left( (1-\alpha)^\ell - \alpha^\ell \right) X_i - \frac{\alpha - \alpha^2}{1-2\alpha} \left( (1-\alpha)^{\ell-1} - \alpha^{\ell-1} \right) {\rm id}_{n}
\end{equation}
for all $\ell \in \mathbb{N}$ and any $i$. 
This identity follows by the induction. 
When $\ell=1$, it is evident. 
Suppose \eqref{e:XiEll}, then we may apply \eqref{e:Scalar} to remove $X_i^2$ and see that 
\begin{align*}
X_i^{\ell +1}
&=
\frac1{1-2\alpha} \left( (1-\alpha)^\ell - \alpha^\ell \right) X_i^2 - \frac{\alpha - \alpha^2}{1-2\alpha} \left( (1-\alpha)^{\ell-1} - \alpha^{\ell-1} \right) X_i
\\
&=
\frac1{1-2\alpha} \left( (1-\alpha)^{\ell+1} - \alpha^{\ell+1} \right) X_i - \frac{\alpha - \alpha^2}{1-2\alpha} \left( (1-\alpha)^{\ell} - \alpha^{\ell} \right) {\rm id}_{n}. 
\end{align*}
This identity together with (ii) revels that 
$$
\sum_{i=1}^m X_i^\ell = \frac1{1-2\alpha} \left( (1-\alpha)^\ell - \alpha^\ell \right) {\rm id}_{n} - \frac{\alpha - \alpha^2}{1-2\alpha} \left( (1-\alpha)^{\ell-1} - \alpha^{\ell-1} \right) m\, {\rm id}_{n}. 
$$ 
Regarding the left-hand side, in order to compute $X_i^l$ in terms of $\alpha$, let us decompose $X_i$ as 
$$
X_i = \alpha \sum_{j=1}^k u_{ij} \otimes u_{ij} + (1-\alpha) \sum_{j=k+1}^n u_{ij} \otimes u_{ij}, 
$$
where $(u_{ij})_{1\le j \le n} \in \R^n$ are orthonormal eigenvectors. 
Then 
$$
X_i^\ell = \alpha^\ell \sum_{j=1}^k u_{ij} \otimes u_{ij} + (1-\alpha)^\ell \sum_{j=k+1}^n u_{ij} \otimes u_{ij}, 
$$
and thus 
$$
\sum_{i=1}^m X_i^\ell
=
\alpha^\ell \sum_{i=1}^m \sum_{j=1}^k u_{ij} \otimes u_{ij} + (1-\alpha)^\ell \sum_{i=1}^m \sum_{j=k+1}^n u_{ij} \otimes u_{ij}. 
$$
Combining above two identities and then dividing by $(1-\alpha)^\ell$, it follows that 
\begin{align*}
&
\frac{\alpha^\ell}{(1-\alpha)^\ell} \sum_{i=1}^m \sum_{j=1}^k u_{ij} \otimes u_{ij} + \sum_{i=1}^m \sum_{j=k+1}^n u_{ij} \otimes u_{ij}
\\
&=
\frac1{1-2\alpha} \left( 1 - \frac{\alpha^\ell}{(1-\alpha)^\ell} \right) {\rm id}_{n} - \frac{\alpha - \alpha^2}{1-2\alpha} \left( \frac1{1-\alpha} - \frac{\alpha^{\ell-1}}{(1-\alpha)^\ell} \right) m\, {\rm id}_{n}. 
\end{align*}
In view of $\alpha \in (0,\frac12)$, by taking the limit $\ell \to \infty$,  we derive that 
\begin{equation}\label{e:EigenVec}
\sum_{i=1}^m \sum_{j=k+1}^n u_{ij} \otimes u_{ij}
=
\frac{1-m\alpha}{1-2\alpha} {\rm id}_{n}. 
\end{equation}

Suppose that $\alpha \neq \frac1m$, and derive a contradiction. 
Then the right-hand side in \eqref{e:EigenVec} is isomorphic.
On the other hand, the left-hand side is the sum of $m(n-k)$ projections onto 1 dimensions. Thus, it follows that 
$
m (n-k) \ge n, 
$
which yields that $k \le (1- \frac1m)n$. This contradicts with $k > (1- \frac1m)n$. 
Therefore, we conclude that $\alpha=\frac1m$, and thus $k=n$ by \eqref{e:EigenVec}. 
\end{proof}

Let us conclude the proof of Theorem \ref{t:KW} by confirming the existence of the extremizer of \eqref{e:MinimizeGaussian-Tal}. 
As we mentioned, we will work on the entropic formulation, and use $\gamma_{A_i}$, rather than $g_{A_i}$, in below. 
Note that, according to Proposition 2.1 in \cite{KW}, we know that $ {\rm T}_{\mathbf{g}} >-\infty$. Also, the example $\mu_i = \gamma_{\rm id_n}$ tells us that $ {\rm T}_{\mathbf{g}}\le 0 $. 
The barycenter $\mu$ of $\gamma_{A_1},\ldots,\gamma_{A_m}$ is known to be Gaussian $\gamma_{A_0}$ where $A_0$ is uniquely determined by the nonlinear equation 
\begin{equation}\label{e:GaussianBrycenter}
A_0 = \frac1m \sum_{i=1}^m \big( A_0^\frac12 A_i A_0^\frac12 \big)^\frac12,  \end{equation}
see \cite{AC,ABCM}.
Entropy and Wasserstein distance of Gaussians are given by 
\begin{align*}
{\rm H}(\gamma_{A_i}|\gamma)
&= 
\frac12 {\rm Tr}\, A_i - \frac{n}2 - \frac12 \log\, {\rm det}\, A_i,\\
\frac1m \sum_{i=1}^m W_2^2(\gamma_{A_i},\gamma_{A_0}) 
&= 
\frac1m \sum_{i=1}^m \big(
{\rm Tr}\, A_0 + {\rm Tr}\, A_i - 2 {\rm Tr}\, ( A_0^\frac12 A_i A_0^\frac12 )^\frac12 
\big)\\
&= 
\frac1m \sum_{i=1}^m {\rm Tr}\, A_i - {\rm Tr}\, A_0,
\end{align*}
where we used \eqref{e:GaussianBrycenter}. 
Thus, 
\begin{align*}
\mathcal{T}(\gamma_{A_1},\ldots,\gamma_{A_m})
= 
\frac12 {\rm Tr}\, A_0 - \frac1{2m^2} \sum_{i=1}^m {\rm Tr}\, A_i - \frac{n(m-1)}{2m} - \frac{m-1}{2m^2} \sum_{i=1}^m \log\, {\rm det}\, A_i. 
\end{align*}
The iteration scheme to identify $A_0$ has been proposed by \cite{ABCM}, and they observed in Theorem 4.2 in their paper that 
\begin{equation}\label{e:ABCM-Inequ}
	{\rm Tr}\, S_k\le {\rm Tr}\, S_{k+1} \le {\rm Tr}\, A_0 \le \frac1m \sum_{i=1}^m {\rm Tr}\, A_i,
\end{equation}
where $S_0$ is a symmetric positive definite matrix that has been arbitrary chosen as an initial data, and 
$$
S_{k+1} := S_k^{-\frac12} \bigg( \frac1m \sum_{i=1}^m \big( S_k^\frac12 A_i S_k^\frac12 \big)^\frac12 \bigg)^2 S_k^{-\frac12}. 
$$
If we begin with the initial data $S_0 = {\rm id}_n$, then 
$$
S_1 = \bigg( \frac1m \sum_{i=1}^m A_i^\frac12 \bigg)^2
= 
\frac1{m^2} 
\sum_{i=1}^m
A_i 
+
\frac1{m^2} 
\sum_{i\neq j}
A_i^\frac12 A_j^\frac12, 
$$
and hence \eqref{e:ABCM-Inequ} particularly implies that $ {\rm Tr}\, A_0 $ is comparable to $ \sum_{i=1}^m {\rm Tr}\, A_i $. 
Moreover, 
$$
{\rm Tr}\, A_0 \ge \frac{1}{m^2} \sum_{i=1}^m {\rm Tr}\, A_i + \frac{1}{m^2} \sum_{i\neq j} {\rm Tr}\, \big(A_i^\frac14 A_j^\frac12 A_i^\frac14 \big),
$$
from which we obtain that 
\begin{align}\label{e:LowerBoundPhi}
\mathcal{T}(\gamma_{A_1},\ldots,\gamma_{A_m})
\ge \frac{1}{2m^2} 
\sum_{i\neq j} {\rm Tr}\, \big(A_i^\frac14 A_j^\frac12 A_i^\frac14 \big)
- \frac{m-1}{2m^2} \sum_{i=1}^m \log\, {\rm det}\, A_i - \frac{n(m-1)}{2m}. 
\end{align}

We claim from this inequality that if $\mathbf{A}$ is near extremizer of ${\rm T}_{\mathbf{g}}$ then none of eigenvalues of $A_1,\ldots,A_m$ does not diverge when $m\ge3$. 
Recalling that ${\rm T}_{\mathbf{g}}\in (-\infty,0]$, the near extremizer $\mathbf{A}$ must satisfy 
\begin{equation}\label{e:UniformBoundT}
    -\infty < {\rm T}_{\mathbf{g}} -1 \le \mathcal{T}(\gamma_{A_1},\ldots,\gamma_{A_m}) 
    \le 
    {\rm T}_{\mathbf{g}} + 1 <\infty.  
\end{equation} 
Firstly, we may assume that ${\rm det}\, A_i \sim1$ for all $i =1,\ldots,m$. 
This is because of \eqref{e:LowerBoundPhi} which in particular implies that 
$$
\mathcal{T}(\gamma_{A_1},\ldots,\gamma_{A_m})
\ge 
\frac{1}{2m^2} 
\sum_{i\neq j} 
\big( {\rm det}\, A_i {\rm det}\, A_j \big)^{\frac1{2n}}
- 
\frac{m-1}{2m^2} \sum_{i=1}^m \log\, {\rm det}\, A_i - \frac{n(m-1)}{2m}. 
$$
Hence, in view of \eqref{e:UniformBoundT}, $ {\rm det}\, A_i {\rm det}\, A_j  $ must be uniformly bounded from above. 
If ${\rm det}\, A_1 \to \infty$ for instance, then ${\rm det}\, A_j \le C / {\rm det}\, A_i \to 0$ for $j=2,\ldots,m$. However, in view of $m-1\ge2$, this means that $ \sum_{i=1}^m \log\, {\rm det}\, A_i \to \infty $ which is a contradiction. 

With this in mind, let $A_1, \dots, A_m$ be symmetric positive definite matrices satisfying that 
 $$
 {\rm Tr}\, (A_i A_j) \le C, \; \forall i \neq j, \;\;\; {\rm det}\, A_i \sim 1,\; \forall i \in [m]. 
  $$
 Without loss of generality, we may suppose that $A_1$ is diagonal and 
 $$
 \max_{k \in [n], i \in [m]} \lambda_{k}(A_i) = (A_1)_{11}. 
 $$
 Here $\lambda_1(A), \dots, \lambda_n(A)$ is the eigenvalues of $A$. 
 In what follows, let us denote for $i \in [m]$, 
 $$
 A_i =
 \begin{pmatrix}
     (A_i)_{11} & v_i \\
     v_i^* & \overline{A_i}, 
 \end{pmatrix}
 $$
 where $\overline{A_i} \in \mathbb{R}^{{(n-1)}\times {(n-1)}}$ and $v_i \in \mathbb{R}^{n-1}$. 

 First let us show that 
 \begin{equation}\label{e:Step1}
     {\rm Tr}\, \overline{A_i} \lesssim (A_1)_{11} \lesssim {\rm det}\, \overline{A_i}, \;\;\; \forall i=2, \dots, m. 
 \end{equation}
 Let us fix $i=2, \dots, m$. 
 To see above, since ${\rm Tr}\, (A_1A_i) \le C$ and $A_1$ is diagonal, it holds that 
 $$
 (A_1)_{11}(A_i)_{11} \le C, 
 $$
 which means that 
 \begin{equation}\label{e:Est1}
     (A_i)_{11} \lesssim \frac{1}{(A_1)_{11}}. 
 \end{equation}
 Moreover since ${\rm det}\, A_i \sim 1$, it follows from \eqref{e:Est1} that 
 $$
 1 \sim {\rm det}\, A_{i} \le (A_{i})_{11} {\rm det}\, \overline{A_{i}} \le \frac{1}{(A_1)_{11}} {\rm det}\, \overline{A_{i}},  
 $$
 which means 
 $$
 (A_1)_{11} \lesssim {\rm det}\, \overline{A_{i}}. 
 $$
 Next, since 
 $$
 (A_1)_{11}
 =
 \max_{k \in [n], i \in [m]} \lambda_{k}(A_i)
 \ge
 (A_i)_{kk}, \;\;\; \forall k=2, \dots, n, 
 $$
 we have 
 $$
 (A_1)_{11} \gtrsim \sum_{k=2}^n (A_i)_{kk} = {\rm Tr}\, \overline{A_{i}}. 
 $$

Secondly let us show that 
 \begin{equation}\label{e:Step2}
     |v_i| \lesssim 1, \;\;\;\forall i=2, \dots, m.
 \end{equation}
 To show this, for fixed $i=2, \dots, m$, we first note that  since $A_i>0$, it holds that 
 $$
 (A_i)_{11} > v_i \overline{A_i}^{-1} v_i^* \ge (\lambda_{{\rm max}}(\overline{A_i}))^{-1} |v_i|^2,  
 $$
 where $\lambda_{{\rm max}}(\overline{A_i}) \coloneqq \max_{k \in [n-1]} \lambda_k(\overline{A_i})$. 
 On the other hand, \eqref{e:Step1} implies that 
 $$
 \lambda_{\rm \max}(\overline{A_i}) \le {\rm Tr}\, \overline{A_i} 
 \lesssim (A_1)_{11}. 
 $$
 Hence combining \eqref{e:Est1}, it holds that 
 $$
 |v_i|^2 \le \lambda_{\rm \max}(\overline{A_i}) (A_i)_{11}
 \lesssim 1. 
 $$

 Thirdly note that 
 \begin{equation}\label{e:Step3}
 {\rm Tr}\, A_iA_j
 =
 (A_i)_{11}(A_j)_{11} + 2 \langle v_i, v_j \rangle + {\rm Tr}\, (\overline{A_i} \overline{A_j}), \;\;\; \forall i \neq j. 
 \end{equation}
 This identity is a conclusion of the direct calculation, so we omit the proof of it. 

 Finally let us recall that $A_i=A_i^{(\ell)}$, and suppose that $(A_1^{(\ell)})_{11} \to \infty$ as $\ell \to \infty$. 
 Let us fix $i, j\in \{2, \dots, m\}$ with $i \neq j$. 
 Then we apply \eqref{e:Step1} to see that 
 $$
 (A_1^{(\ell)})_{11}^{2} \lesssim {\det}\, (\overline{A_i^{(\ell)}} \overline{A_{j}^{(\ell)}} )
 \lesssim
 ({\rm Tr}\, (\overline{A_i^{(\ell)}} \overline{A_{j}^{(\ell)}} ) )^{\frac1{n-1}}. 
 $$
 Since $(A_1^{(\ell)})_{11} \to \infty$ as $\ell \to \infty$, this means that ${\rm Tr}\, (\overline{A_i^{(\ell)}} \overline{A_{j}^{(\ell)}}) \to \infty$ as $\ell \to \infty$. 
 On the other hand, \eqref{e:Step2} means that 
 $$
 \langle v_i, v_j \rangle
 \ge
 - |v_i| |v_j|
 \ge - C
 $$
 for some constant $C=C_{n,m}>0$. 
 Thus it follows from \eqref{e:Step3} that 
 $$
 {\rm Tr}\, (A_i^{(\ell)}A_j^{(\ell)}) 
 \ge
 - 2C + {\rm Tr}\, (\overline{A_i^{(\ell)}} \overline{A_{j}^{(\ell)}})
 \to \infty, \;\;\; \ell \to \infty,
 $$
 which is a contradiction. 

We now take the minimizing sequence $\mathbf{A}^{(R)}$: 
$$
{\rm T}_{\mathbf{g}} = \lim_{R\to \infty} \mathcal{T}( \gamma_{A_1^{(R)}},\ldots,\gamma_{A_m^{(R)}} ). 
$$
From the above claim,  none of eigenvalues of $A_i^{(R)}$ tends to infinity. 
Therefore, $A_i^{(R)}$ has some subsequence which converges to some $A_i^{\star}\ge0$ in a standard topology of $\mathbb{R}^{n^2}$. 
If $A_i^{\star}$ is degenerate in the sense that one of eigenvalue of $A_i^\star$ is zero for some $i$, then $\gamma_{A_i^\star}$ contains some Dirac delta. 
But, in such a case, $\mathcal{T}(\gamma_{A_1^\star},\ldots, \gamma_{A_m^{\star}}) = +\infty$ which is a contradiction. 
This concludes the existence of the minimizer $A_i^\star >0$.

\if0 
\section{Contextual remarks}\label{Section6}
\begin{enumerate}
    \item 
    Symmetric IBL for general BL data, removing log-coincavity 
    \item 
    Characterization of the Gaussian IBL constant; analogue to Barthe--Wolff. 
    This is related to the problem to identify the sharp constant of the multilinear Laplace transform bound, hypercontractivity. 
    \item 
    Symmetric Talagrand for general BL data. 
    \item Weighted symmetric Talagrand inequality: let $\mathfrak{m}=e^{-V}\, dx$ be a probability measure such that $V$ is $C^2(\mathbb{R}^n)$ and $\nabla^2 V \ge \rho$ for some $\rho>0$. 
    Then for any symmetric probability measures $\mu_i \in \mathcal{P}_2(\mathbb{R}^n)$ for $i=1, \dots, m$, it holds that 
    $$
    \frac \rho{2m} \inf_{\mu \in \mathcal{P}_2(\mathbb{R}^n)} \sum_{i=1}^m W_2^2(\mu, \mu_i) 
    \le
    \frac{m-1}{m^2} H(\mu_i | \mathfrak{m}). 
    $$
    This assertion follows by Caffarelli's contraction theorem. 
    \item 
    Poincare inequality for multi-functions. 
    \item
    A polar body for multi convex bodies which is given via generalized Laplace transform. 
    \item 
    IBL and KW under a condition of barycenters.
    \item 
    Affine isoperimetric inequality. 
    \item 
    Reverse LSI. 
    \item 
    Finiteness of KW constant and positivity of symmetric IBL constant. 
\end{enumerate}
\fi

\section{Appendix: Preservation of uniform log-concavity and log-convexity}\label{Appendix}

\begin{lemma}[{\cite[Theorem 4.3]{BraLi_JFA}}]\label{l:BraLi76}
	Let $\mathcal{Q} = \begin{pmatrix}
		A & B \\
		B^{*} & C  
	\end{pmatrix} $ be a positive definite matrix on $\mathbb{R}^{2n}$, where $A,B,C$ are $n\times n$ matrices. 
	Also, let\footnote{This $D$ comes from the fact that 
	$$
		\int_{\mathbb{R}^n} e^{-\langle (x,y), \mathcal{Q} (x,y)\rangle}\, dy = 
		c e^{-\langle x, Dx\rangle},\quad x\in \mathbb{R}^n,
	$$
	for some explicit constant $c$. } 
	$$
	D:= A - B C^{-1} B^*. 
	$$
	For a given log-concave $F:\mathbb{R}^{2n}\to [0,\infty)$, 
        let 
	$$
	G(x):= e^{\langle x, Dx\rangle} \int_{\mathbb{R}^n} e^{-\langle (x,y),\mathcal{Q}(x,y) \rangle} F(x,y)\, dy,\quad x\in \mathbb{R}^n. 
	$$
	Then, if $F$ is log-concave on $\mathbb{R}^{2n}$, $G$ is log-concave on $\mathbb{R}^n$. If $F$ is log-convex on $\mathbb{R}^{2n}$, $G$ is log-convex on $\mathbb{R}^n$
\end{lemma}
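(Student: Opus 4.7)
The plan is to reduce both assertions to standard principles by a single change of variable that completes the square in the Gaussian exponent. Write $T := C^{-1}B^{*}$ and observe that $C>0$, since $\mathcal{Q}>0$ forces its principal submatrices to be positive definite. A direct calculation then gives
\begin{equation*}
    \langle (x,y),\mathcal{Q}(x,y)\rangle
    =
    \langle x, D x\rangle + \langle y+Tx,\, C(y+Tx)\rangle.
\end{equation*}
Inserting this identity into the definition of $G$ and substituting $y\mapsto y-Tx$ in the inner integral yields the reformulation
\begin{equation*}
    G(x) = \int_{\mathbb{R}^n} e^{-\langle y,Cy\rangle}\, \widetilde{F}(x,y)\, dy,
    \qquad
    \widetilde{F}(x,y) := F(x,\, y-Tx).
\end{equation*}
The point of this rewriting is that the Gaussian weight is now independent of $x$, and since $(x,y)\mapsto (x, y-Tx)$ is an invertible affine map of $\mathbb{R}^{2n}$, the function $\widetilde{F}$ inherits log-concavity (resp.\ log-convexity) from $F$.

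For the log-concave case, the integrand $(x,y)\mapsto e^{-\langle y,Cy\rangle}\widetilde{F}(x,y)$ is the product of two log-concave functions on $\mathbb{R}^{2n}$, hence is log-concave. Pr\'ekopa's marginal theorem then yields the log-concavity of $G$ as desired.

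For the log-convex case, the log-convexity of $\widetilde{F}$ in $(x,y)$ implies that for each fixed $y\in\mathbb{R}^n$, $x_1,x_2\in\mathbb{R}^n$ and $t\in[0,1]$,
\begin{equation*}
    \widetilde{F}(tx_1+(1-t)x_2,y) \le \widetilde{F}(x_1,y)^{t}\,\widetilde{F}(x_2,y)^{1-t}.
\end{equation*}
Multiplying by the nonnegative weight $e^{-\langle y,Cy\rangle}$, integrating in $y$, and then applying H\"older's inequality with conjugate exponents $1/t$ and $1/(1-t)$ yields $G(tx_1+(1-t)x_2)\le G(x_1)^{t}G(x_2)^{1-t}$, which is precisely the log-convexity of $G$.

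Once the change of variable has repositioned the $x$-dependence inside $\widetilde{F}$, there is no substantive obstacle left: the log-concave half is Pr\'ekopa, and the log-convex half is Hölder's inequality. In fact, the log-convex assertion uses nothing about the Gaussian form of the kernel beyond its nonnegativity, which reflects the general principle that integration of a jointly log-convex function against any positive measure in some of its variables produces a log-convex function of the remaining variables.
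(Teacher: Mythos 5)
Your proof is correct. Note that the paper does not prove this lemma at all — it is quoted verbatim from Brascamp--Lieb \cite[Theorem~4.3]{BraLi_JFA}, so there is no proof in the paper to compare against.

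Your argument is a clean, self-contained derivation. The completion-of-the-square identity
$\langle (x,y),\mathcal{Q}(x,y)\rangle = \langle x,Dx\rangle + \langle y+Tx, C(y+Tx)\rangle$
is exactly the Schur-complement decomposition, and after the unimodular shift $y\mapsto y-Tx$ the prefactor $e^{\langle x,Dx\rangle}$ cancels, leaving $G(x)=\int e^{-\langle y,Cy\rangle}\widetilde F(x,y)\,dy$ with $\widetilde F$ inheriting the relevant concavity/convexity from $F$ through the invertible affine change of variable. The log-concave half is then Pr\'ekopa (here $C>0$ guarantees the Gaussian factor is log-concave), and the log-convex half is H\"older; as you observe, the latter needs only separate log-convexity in $x$ and nonnegativity of the $y$-weight. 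One could add a word that $C>0$ follows from $\mathcal{Q}>0$ as a principal block (you do note this), and that the statement tacitly assumes $G<\infty$ so that the H\"older inequality is non-vacuous in the log-convex case; both are routine. Whether Brascamp--Lieb's original argument proceeds this way or via a second-order (Hessian) computation, your route is arguably simpler and fully rigorous.
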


By using this, we may prove the following.

\begin{lemma}\label{l:ConvUniLog}
	Let $ 0< \lambda_1, \lambda_2 \le \Lambda_1, \Lambda_2 <\infty$ and $f_i \in L^1(\mathbb{R}^n)$ for $i=1,2$.  If $f_i$ is $\lambda_i$-uniformly log-concave, then $f_1 \ast f_2$ is $(\lambda_1^{-1} + \lambda_2^{-1})^{-1}$-uniformly log-concave. 
    Similarly, if $f_i$ is $\Lambda_i$-uniformly log-convex, then $f_1 \ast f_2$ is $(\Lambda_1^{-1} + \Lambda_2^{-1})^{-1}$-uniformly log-convex. 
	In particular, if $f \in L^1(\mathbb{R}^n)$ is $\lambda$-uniformly log-concave and $\Lambda$-uniformly log-convex for given $0<\lambda < \Lambda$, then so is $f \ast f (\sqrt{2} \cdot)$. 
\end{lemma}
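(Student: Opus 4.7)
The plan is to deduce both parts of Lemma \ref{l:ConvUniLog} from Lemma \ref{l:BraLi76} by extracting the Gaussian factors from $f_1, f_2$ and recognizing the convolution $(f_1*f_2)(x)$ as a Gaussian-weighted integral on $\R^{2n}$ of a log-concave (resp.\ log-convex) auxiliary function. Writing $f_i = e^{-\varphi_i}$, for the uniformly log-concave case I would decompose $\varphi_i(x) = \tfrac{\lambda_i}{2}|x|^2 + \psi_i(x)$ with $\psi_i$ convex, which is equivalent to $f_i$ being $\lambda_i$-uniformly log-concave. Then
\[
(f_1 * f_2)(x) = \int_{\R^n} e^{-\frac{\lambda_1}{2}|x-y|^2 - \frac{\lambda_2}{2}|y|^2} e^{-\psi_1(x-y)-\psi_2(y)}\, dy = \int_{\R^n} e^{-\langle (x,y),\mathcal{Q}(x,y)\rangle} F(x,y)\, dy,
\]
with
\[
\mathcal{Q} = \frac{1}{2}\begin{pmatrix} \lambda_1\, {\rm id}_n & -\lambda_1\, {\rm id}_n \\ -\lambda_1\, {\rm id}_n & (\lambda_1+\lambda_2)\, {\rm id}_n \end{pmatrix}
\]
and $F(x,y) = e^{-\psi_1(x-y) - \psi_2(y)}$. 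The Schur complement of the top-left block of $2\mathcal{Q}$ equals $\lambda_2\, {\rm id}_n > 0$, so $\mathcal{Q}$ is positive definite, and $F$ is log-concave on $\R^{2n}$ as the exponential of the negative of a sum of convex functions precomposed with linear maps.

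A direct Schur complement computation yields $D = A - BC^{-1}B^{*} = \tfrac{\lambda_1\lambda_2}{2(\lambda_1+\lambda_2)}\, {\rm id}_n$. Applying Lemma \ref{l:BraLi76} then shows that $e^{\frac{\lambda_1\lambda_2}{2(\lambda_1+\lambda_2)}|x|^2}(f_1 * f_2)(x)$ is log-concave on $\R^n$, which is precisely the statement that $f_1*f_2$ is $(\lambda_1^{-1}+\lambda_2^{-1})^{-1}$-uniformly log-concave. For the log-convex case the same scheme applies after the dual decomposition $\varphi_i(x) = \tfrac{\Lambda_i}{2}|x|^2 - \psi_i(x)$ with $\psi_i$ convex: exactly the same matrix $\mathcal{Q}$ (with $\Lambda_i$ in place of $\lambda_i$) and the same Schur complement $D$ appear, but now $F(x,y) = e^{\psi_1(x-y)+\psi_2(y)}$ is log-convex on $\R^{2n}$, so the log-convex clause of Lemma \ref{l:BraLi76} delivers the $(\Lambda_1^{-1}+\Lambda_2^{-1})^{-1}$-uniform log-convexity of $f_1*f_2$.

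For the special claim about $f * f(\sqrt{2}\, \cdot)$, I would apply the above with $f_1 = f_2 = f$ to obtain that $f * f$ is simultaneously $(\lambda/2)$-uniformly log-concave and $(\Lambda/2)$-uniformly log-convex, and then invoke the elementary observation that if $h$ is $\mu$-uniformly log-concave (resp.\ log-convex), then $h(a\, \cdot)$ is $a^2 \mu$-uniformly log-concave (resp.\ log-convex), since the Hessian of $-\log h$ is multiplied by $a^2$ under the substitution. Taking $a = \sqrt 2$ restores the constants $\lambda$ and $\Lambda$, and the scalar prefactor $2^{n/2}$ does not affect any log-concavity/convexity condition. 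The only technical nuisances I anticipate are confirming the positive definiteness of $\mathcal{Q}$ and the integrability of $F$ against the Gaussian factor; both are immediate from the strict positivity of $\lambda_i$ (resp.\ $\Lambda_i$), so I expect no serious obstacle beyond bookkeeping the constants.
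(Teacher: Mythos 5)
Your proof is correct and uses the same central tool, Lemma \ref{l:BraLi76}, applied to the same auxiliary log-concave (resp.\ log-convex) function $F(x,y)=g_1(x-y)g_2(y)$ where $g_i(x)=f_i(x)e^{\lambda_i|x|^2/2}$ (which is exactly your $e^{-\psi_1(x-y)-\psi_2(y)}$); the genuine difference lies in how the Gaussian quadratic form is split. You read the kernel off directly from the expansion of $\tfrac{\lambda_1}{2}|x-y|^2+\tfrac{\lambda_2}{2}|y|^2$, giving a $\mathcal{Q}$ with blocks $A=\tfrac{\lambda_1}{2}\,\mathrm{id}_n$, $B=-\tfrac{\lambda_1}{2}\,\mathrm{id}_n$, $C=\tfrac{\lambda_1+\lambda_2}{2}\,\mathrm{id}_n$; this $\mathcal{Q}$ is strictly positive definite (Schur complement $\tfrac{\lambda_2}{2}\,\mathrm{id}_n$), and its $D$-factor from Lemma \ref{l:BraLi76} then automatically supplies the desired weight $e^{\frac{\lambda_1\lambda_2}{2(\lambda_1+\lambda_2)}|x|^2}$ on the output. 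The paper instead moves that weight to the left-hand side of the convolution identity before invoking Lemma \ref{l:BraLi76}, which shrinks the top-left block to $\tfrac{\lambda_1^2}{2(\lambda_1+\lambda_2)}\,\mathrm{id}_n$ so that $D=0$ but leaves the matrix only positive \emph{semi}definite; the paper then perturbs the top-left block by $\varepsilon\,\mathrm{id}_n$, applies Lemma \ref{l:BraLi76} to the perturbed $\mathcal{Q}_\varepsilon$, and notes that the resulting function is $\varepsilon$-independent. Your choice of splitting eliminates this $\varepsilon$-regularization step entirely, which is a modest but genuine streamlining. The log-convex half (decomposing $\varphi_i=\tfrac{\Lambda_i}{2}|\cdot|^2-\psi_i$) and the scaling step for $f*f(\sqrt{2}\,\cdot)$ are otherwise handled the same way, and your explicit verification that $h\mapsto h(a\,\cdot)$ multiplies the uniform log-concavity/convexity constant by $a^2$ correctly supplies the detail the paper leaves implicit.
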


\begin{proof}
Suppose that $f_i$ is $\lambda_i$-uniformly log-concave for $i=1,2$, and let us show that $f_1 \ast f_2$ is $(\lambda_1^{-1} + \lambda_2^{-1})^{-1}$-uniformly log-concave. 
The argument in the case of the uniformly log-convex is the same. 
Note that $g_i(x):=f_i(x)e^{\frac{\lambda_i}2|x|^2}$ for $i=1,2$ is log-concave, and our goal is to show 
\begin{equation}\label{e:Goal8/11-1}
	f_1\ast f_2(x) e^{ \frac{\lambda_1 \lambda_2}{2(\lambda_1 + \lambda_2)} |x|^2 }:\; \textrm{log-concave}. 
\end{equation}
To see this, we first notice from a direct calculation that 
\begin{align*}
f_1\ast f_2(x) e^{ \frac{\lambda_1 \lambda_2}{2(\lambda_1 + \lambda_2)} |x|^2 }
&= 
\int g_1(x-y)g_2(y) e^{ - \langle (x,y), \mathcal{Q}_0 (x,y) \rangle }\, dy, 
\end{align*}
where 
$
\mathcal{Q}_0:= 
\begin{pmatrix} (\frac{\lambda_1}2 - \frac{\lambda_1\lambda_2}{2(\lambda_1+\lambda_2)})\, {\rm id}_{n} & - \frac{\lambda_1}2\, {\rm id}_{n} \\ - \frac{\lambda_1}2\, {\rm id}_{n} & \frac{\lambda_1 + \lambda_2}{2} \, {\rm id}_{n} \end{pmatrix}.
$
Remark that this $\mathcal{Q}_0$ is positive semidefinite but not positive definite, and thus we consider 
$$
\mathcal{Q}= 
\mathcal{Q}_\varepsilon
:= 
\begin{pmatrix} (\frac{\lambda_1}2 - \frac{\lambda_1\lambda_2}{2(\lambda_1+\lambda_2)} + \varepsilon)\, {\rm id}_{n} & - \frac{\lambda_1}2\, {\rm id}_{n} \\ - \frac{\lambda_1}2\, {\rm id}_{n} & \frac{\lambda_1 + \lambda_2}{2} \, {\rm id}_{n} \end{pmatrix},
$$
for $\varepsilon>0$ instead. Then $\mathcal{Q}_\varepsilon$ is positive definite. That is, we consider 
$$
f_1\ast f_2(x) e^{ (\frac{\lambda_1 \lambda_2}{2(\lambda_1 + \lambda_2)} -\varepsilon) |x|^2 }
= 
\int g_1(x-y)g_2(y) e^{ - \langle (x,y), \mathcal{Q}_\varepsilon (x,y) \rangle }\, dy. 
$$
With this choice, $D = (\frac{\lambda_1}2 - \frac{\lambda_1 \lambda_2}{2(\lambda_1 + \lambda_2)} + \varepsilon - \frac{\lambda_1^2}{4} \frac{2}{\lambda_1+\lambda_2} ) {\rm id}_{n} = \varepsilon {\rm id}_{n}$, and Lemma \ref{l:BraLi76} confirms that 
$$
G(x):= e^{\varepsilon|x|^2} 
\int g_1(x-y)g_2(y) e^{ - \langle (x,y), \mathcal{Q}_\varepsilon (x,y) \rangle }\, dy = f_1\ast f_2(x) e^{ \frac{\lambda_1 \lambda_2}{2(\lambda_1 + \lambda_2)} |x|^2 }
$$
is log-concave if $F(x,y):= g_1(x-y)g_2(y)$ is log-concave on $\mathbb{R}^{2n}$. 
The log-concavity of $F$ is a consequence of the one of $g_i$ for $i=1,2$. 
Thus, we complete the proof of \eqref{e:Goal8/11-1}. 
\end{proof}

\section*{Acknowledgements}
This work was supported by JSPS Overseas Research Fellowship and JSPS Kakenhi grant numbers 21K13806, 23K03156, and 23H01080 (Nakamura), and JSPS Kakenhi grant numbers 24KJ0030 (Tsuji). 
The first author is grateful to Naohito Tomita and Takahisa Inui for their generous supports while he was engaging to this work. He also thanks to analysis group in University of Birmingham for their hospitality. 
The second author would like to thank Neal Bez for introducing Ball's inequality and telling him related topics on it. 
The second author also would like to thank Takuya Nishisako for discussing and giving him useful comments.

\end{document}